\definecolor{labelkey}{gray}{.8}
\definecolor{refkey}{gray}{.8}
\definecolor{darkred}{rgb}{0.9,0.1,0.1}
\definecolor{darkgreen}{rgb}{0,0.5,0}
\newtheorem{theorem}{Theorem}[section]
\newtheorem{lemma}[theorem]{Lemma}
\newtheorem{proposition}[theorem]{Proposition}
\theoremstyle{remark}
\newtheorem{remark}[theorem]{Remark}
\renewenvironment{proof}[1][Proof]{ {\itshape \noindent {#1.}} }{$\Box$
\medskip}
\numberwithin{equation}{section}
\numberwithin{figure}{section}
\theoremstyle{plain}
\newcommand{\R}{\mathbb{R}}
\newcommand{\1}{\mathbbm{1}}
\newcommand{\eps}{\varepsilon}
\newcommand{\bV}{\mathcal{V}}
\newcommand{\bX}{\mathrm{X}}
\newcommand{\bY}{\mathrm{Y}}
\newcommand{\malD}{\mathbb{D}}
\newcommand{\dd}{\mathrm{d}}
\newcommand{\lipc}{\sigma_{\mathrm{Lip}}}
\newcommand{\Var}{\mathrm{Var}}
\newcommand{\Cov}{\mathrm{Cov}}
\newcommand{\filt}{\mathscr{F}}
\newcommand{\Prob}{\mathbb{P}}
\newcommand{\hilb}{\mathcal{H}}
\begin{document}
\title[Fluctuations of Nonlinear SHE in $d=2$]{Gaussian fluctuations of a nonlinear stochastic heat equation in dimension two}

\author{Ran Tao}

\address[Ran Tao]{Department of Mathematics, University of Maryland, College Park, MD, 20740.  rantao16@umd.edu}

\begin{abstract}
    We study the Gaussian fluctuations of a nonlinear stochastic heat equation in spatial dimension two. The equation is driven by a Gaussian multiplicative noise. The noise is white in time, smoothed in space at scale $\eps$, and tuned logarithmically by a factor $\frac{1}{\sqrt{\log \eps^{-1}}}$ in its strength. We prove that, after centering and rescaling, the solution random field converges in distribution to an Edwards-Wilkinson limit as $\eps \downarrow 0$. The tool we used here is the Malliavin-Stein's method. We also give a functional version of this result.
\bigskip

\noindent \textsc{Keywords:} Stochastic heat equation, Gaussian fluctuations, Malliavin calculus, Stein's method.

\end{abstract}
\maketitle

\section{Introduction}
\label{s.introduction}
\subsection{Main result}
Consider the following parameterized two-dimensional nonlinear stochastic heat equation (SHE) with constant initial data on $(t,x) \in [0,+\infty)\times \R^2$:
\begin{align}
\partial_t u^\eps (t,x) &= \frac{1}{2} \Delta u^\eps (t,x) + \frac{\beta}{\sqrt{\log \eps^{-1}}}\sigma(u^\eps(t,x))\dd W_{\phi^\eps}(t,x);
    \label{eq:she}\\
    u^\eps(0,x) &= 1.\label{eq:ic}
\end{align}
Here $\beta>0$ is a constant; $\sigma:[0,\infty)\to[0,\infty)$ is a globally Lipschitz function satisfying $\sigma(0)=0$, $\sigma(1) \neq 0$, and $|\sigma(x)-\sigma(y)|\leq \lipc|x-y|$ for all $x,y \in \R^2$ with $\lipc>0$ fixed. 
We define 
\begin{equation}
     \dd W_{\phi^\eps}(t,x) = \phi^\eps * \dd W (t,x), \label{eq:colorednoise}
\end{equation} where $\dd W(t,x)$ is a space-time white noise on $[0,+\infty)\times\R^2$, $\phi \in C_c^\infty(\R^2)$ is a non-negative mollifier with $\int{\phi} \dd x =1$ and $\phi^\eps (x) = \frac{1}{\eps^2}\phi(\frac{x}{\eps})$, and $*$ denotes convolution in space.  

The noise $\dd W_{\phi^\eps}(t,x)$ is a centered Gaussian noise, white in time and homogeneously colored in space. The spatial correlation length is at scale $\eps$. Formally, the covariance operator
of $\dd W_{\phi}^{\eps}$ is given by
\begin{equation}
	\mathbb{E}\left[\dd W_{\phi}^{\eps}(t,x)\dd W_{\phi}^{\eps}(t',x')\right]=\delta_0(t-t')\tfrac{1}{\eps^2}R(\tfrac{x-x'}{\eps}).\nonumber\label{eq:covarianceoperator}
\end{equation}
Here $\delta_0$ is the Dirac delta measure with unit mass at zero. $R(x)$ is non-negative and non-negative definite, given by
\begin{equation}
    R(x)= \int_{\R^2} \phi(x+y)\phi(y)\dd y \in C_c^\infty(\R^2).\nonumber
    \label{eq:functionR}
\end{equation}
For $\eps>0$, it is well-known that the initial value problem \eqref{eq:she}--\eqref{eq:ic} is well-posed and has a mild formulation
\begin{equation}
    u^\eps (t,x) =1+\frac{\beta}{\sqrt{\log \eps^{-1}}} \int_{0}^{t} \int_{\R^2} G_{t-s}(x-y) \sigma\left(u^{\eps}(s, y)\right) \dd W_{\phi^\eps}(s, y).\label{eq:umild}
\end{equation}
Here $G_{t}(x):=\frac{1}{2\pi t}\mathrm{e}^{-|x|^{2}/(2t)}$
denotes the two-dimensional heat kernel. The stochastic integral in \eqref{eq:umild} is interpreted in the Itô-Walsh sense.

Our main result is the following central limit theorem (CLT):
\begin{theorem}\label{thm:mainthm}
There exists some $\beta_0 \in (0, \frac{\sqrt{2\pi}}{\lipc})$ such that if $\beta <\beta_0$, for any fixed $T>0$ and any fixed Schwartz function $g\in C_c^\infty(\R^2)$, the random variable
\begin{equation}
    \bX^{\eps,T}(g):=\sqrt{\log \eps^{-1}} \int_{\R^2}\left[u^{\eps}(T, x)-1\right] g(x) \dd x
    \label{eq:bigX}
\end{equation}
converges in law to a Gaussian distribution
\begin{equation}
    \bX^T(g):=\int_{\R^2} U(T, x) g(x) \dd x
    \label{eq:bigXlimit}
\end{equation}
as $\eps \to 0$. Here $U$ is the random distribution that solves the Edwards-Wilkinson equation in dimension two:
\begin{equation}
    \partial_t U=\frac{1}{2} \Delta U+\beta\sqrt{\mathbb{E} \sigma\left(\Xi_{1, 2}(2)\right)^{2}} \dd W(t, x), \qquad U(0, x)=0.\label{eq:edwardswilkinson}
\end{equation}
$\Xi_{1,2}(\cdot)$ is a random process defined by the following as in \cite[p.3]{dunlap2020forward}:\\
Let $\{B(q)\}_{q\ge0}$ be a 1D standard Brownian motion with the natural filtration
$\{\mathcal{G}_{q}\}_{q\ge0}$. Then $\Xi_{1,2}(\cdot)$ is the (unique) solution to a forward-backward stochastic differential equation (FBSDE):
\begin{align}
    \dd \Xi_{1,2}(q) & =\frac{\beta}{2\sqrt{\pi}}\big(\mathbb{E}[\sigma^2(\Xi_{1,2}(2))\mid\mathcal{G}_{q}]\big)^{1/2}\dd B(q),\qquad q\in(0,2];\label{eq:fbsde}\\
	\Xi_{1,2}(0)     & =1.\label{eq:fbsdeinitialcon}
\end{align}
\end{theorem}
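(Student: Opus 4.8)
The plan is to realize $F^\eps := \bX^{\eps,T}(g)$ as a single Skorokhod integral and then apply a Malliavin--Stein bound for the total variation distance to a Gaussian. Substituting the mild formulation \eqref{eq:umild} into \eqref{eq:bigX} and using the symmetry of the heat kernel, the prefactor $\sqrt{\log\eps^{-1}}$ cancels and I obtain
\[
F^\eps = \beta\int_0^T\!\!\int_{\R^2} g_{T-s}(y)\,\sigma(u^\eps(s,y))\,\dd W_{\phi^\eps}(s,y) = \delta(v^\eps),
\]
where $g_{T-s} := G_{T-s}*g$, the divergence $\delta$ and the Malliavin derivative $D$ are taken with respect to the underlying space--time white noise $W$ on $\hilb = L^2([0,T]\times\R^2)$, and $v^\eps(s,z) := \beta\int_{\R^2} g_{T-s}(y)\,\sigma(u^\eps(s,y))\,\phi^\eps(y-z)\,\dd y$. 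Since the integrand is adapted, this Skorokhod integral is the same as the It\^o--Walsh integral, so $\mathbb{E}[F^\eps]=0$ and, by It\^o isometry, $\sigma_\eps^2 := \Var(F^\eps) = \mathbb{E}\|v^\eps\|_\hilb^2$.

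First I would prove the variance convergence $\sigma_\eps^2\to\sigma_\infty^2$. Expanding the isometry,
\[
\sigma_\eps^2 = \beta^2\int_0^T\!\!\int_{\R^2}\!\!\int_{\R^2} g_{T-s}(y)\,g_{T-s}(y')\,\mathbb{E}\big[\sigma(u^\eps(s,y))\,\sigma(u^\eps(s,y'))\big]\,\tfrac{1}{\eps^2}R\big(\tfrac{y-y'}{\eps}\big)\,\dd y\,\dd y'\,\dd s,
\]
so everything reduces to the near-diagonal behavior of the two-point function $\mathbb{E}[\sigma(u^\eps(s,y))\sigma(u^\eps(s,y'))]$ at the critical scale $|y-y'|\sim\eps$. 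The naive guess $u^\eps\approx1$ would produce the constant $\sigma(1)^2$, but in $d=2$ the logarithmically divergent self-interaction of the heat kernel renormalizes this to $\mathbb{E}\sigma(\Xi_{1,2}(2))^2$; this is exactly where the forward--backward SDE \eqref{eq:fbsde}--\eqref{eq:fbsdeinitialcon} of \cite{dunlap2020forward} enters, and I would use their identification of the effective constant to get $\sigma_\infty^2 = \beta^2\,\mathbb{E}\sigma(\Xi_{1,2}(2))^2\int_0^T\|G_{T-s}*g\|_{L^2(\R^2)}^2\,\dd s$, which is precisely $\Var(\bX^T(g))$ read off from \eqref{eq:edwardswilkinson}.

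Next I would run the quantitative CLT. With $N_\eps\sim N(0,\sigma_\eps^2)$, the Malliavin--Stein inequality for a divergence gives $d_{TV}(F^\eps, N_\eps)\le \tfrac{2}{\sigma_\eps^2}(\Var\langle DF^\eps, v^\eps\rangle_\hilb)^{1/2}$, and the commutation relation $D\delta(v^\eps)=v^\eps+\delta(Dv^\eps)$ splits the inner product as
\[
\langle DF^\eps,v^\eps\rangle_\hilb=\|v^\eps\|_\hilb^2+\langle\delta(Dv^\eps),v^\eps\rangle_\hilb .
\]
It therefore suffices to show $\Var(\|v^\eps\|_\hilb^2)\to0$ and $\Var(\langle\delta(Dv^\eps),v^\eps\rangle_\hilb)\to0$. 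Both are governed by the Malliavin derivative $D_{r,z}u^\eps(t,x)$, which solves the linearized mild equation obtained by differentiating \eqref{eq:umild}; the tuning factor makes $D_{r,z}u^\eps$ of size $O(\beta/\sqrt{\log\eps^{-1}})$ in $L^p$, so that both the fluctuation of $\|v^\eps\|_\hilb^2$ about its mean and the correction term carry extra negative powers of $\log\eps^{-1}$ and vanish.

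The main obstacle is twofold. First, every estimate above rests on moment bounds for $u^\eps$ and $Du^\eps$ that are uniform in $\eps$; these come from a Gronwall/It\^o-isometry argument on the mild equation in which the two-dimensional heat kernel contributes $\tfrac{1}{2\pi}\log\eps^{-1}$ against the prefactor $\tfrac{\beta^2\lipc^2}{\log\eps^{-1}}$, leaving the dimensionless quantity $\tfrac{\beta^2\lipc^2}{2\pi}$. Demanding this be $<1$ is exactly the hypothesis $\beta<\beta_0<\sqrt{2\pi}/\lipc$, which I expect to be indispensable for closing these bounds uniformly in $\eps$. Second, and more delicate, is extracting the renormalized constant $\mathbb{E}\sigma(\Xi_{1,2}(2))^2$ from the two-point function on the diagonal: this requires resolving the behavior of $\mathbb{E}[\sigma(u^\eps)\sigma(u^\eps)]$ at scale $\eps$ and is where the $d=2$ criticality genuinely bites. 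Once both the variance convergence and $d_{TV}(F^\eps,N_\eps)\to0$ are established, combining them with $\sigma_\eps^2\to\sigma_\infty^2$ yields $F^\eps\Rightarrow N(0,\sigma_\infty^2)$; since $\bX^T(g)$ is the centered Gaussian with this same variance, the limit is identified and the proof is complete.
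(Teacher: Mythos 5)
Your proposal is correct and follows essentially the same route as the paper: both realize the smoothed field as a divergence $\delta(v^\eps)$, apply a Malliavin--Stein bound, split $\langle DF^\eps, v^\eps\rangle_{\hilb}$ into $\|v^\eps\|_{\hilb}^2$ plus the commutator correction $\langle \delta(Dv^\eps), v^\eps\rangle_{\hilb}$, identify the limit of $\mathbb{E}\|v^\eps\|_{\hilb}^2$ through the Dunlap--Gu local limit $\Xi_{1,2}(2)$, and make both variance terms vanish using uniform-in-$\eps$ moment bounds on $u^\eps$ and on $Du^\eps$ (the latter of size $O(1/\sqrt{\log \eps^{-1}})$), which is exactly where the smallness of $\beta$ enters. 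The remaining differences are cosmetic --- you keep macroscopic variables and the white-noise Hilbert space with total-variation distance, whereas the paper rescales to microscopic variables with a fixed colored noise and uses $C^2$ test functions; note also that the variance estimates require $p=4$ moments of $u^\eps$ and $Du^\eps$, which is why the paper's admissible threshold is $\beta_0=\frac{1}{2\sqrt{6}}\frac{\sqrt{2\pi}}{\lipc}$ rather than the $p=2$ heuristic value $\sqrt{2\pi}/\lipc$ your sketch suggests.
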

The existence and uniqueness of solutions to \eqref{eq:fbsde}--\eqref{eq:fbsdeinitialcon} are given in \cite[Theorem~1.1]{dunlap2020forward}. To keep the consistency of notations, the subscript $_{1,2}$ denotes that the initial data equals to $1$ constantly and the terminal time is at $q=2$.

From \cite[Theorem~1.1]{dunlap2020forward}, we have that the coefficient in \eqref{eq:edwardswilkinson} satisfies $$\beta\sqrt{\mathbb{E} \sigma\left(\Xi_{1, 2}(2)\right)^{2}} = 2 \sqrt{\pi} J(2,1),$$ where $J^2(q,a)$ is a viscosity solution to the quasilinear heat equation
\begin{align*}
    \partial_q J^2 &= \frac{1}{2} J^2\partial_{aa}J^2;\\
    J^2(0,a) &= \frac{\beta^2}{4\pi}\sigma^2(a).
\end{align*}

We also prove the following functional version of the CLT:
\begin{theorem}\label{thm:functionalCLT}
Let $\beta <\beta_0$. For any $T>0$ and $g\in C_c^\infty(\R^2)$. 
We have convergence in law in the space of continuous functions $C([0,T])$,
\begin{equation}
    \left(\bX^{\eps,t}(g) \right)_{t \in [0,T]}\to \left(\beta\sqrt{\mathbb{E} \sigma\left(\Xi_{1, 2}(2)\right)^{2}}  \int_0^t \int_{\R^2} G_{t-s}*g(x)\dd W(s,x) \right)_{t\in [0,T]}, \nonumber
\end{equation}
as $\eps \to 0$.
As above, $\dd W$ is a space-time white noise on $[0,+\infty)\times \R^2$.
\end{theorem}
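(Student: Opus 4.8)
The plan is to prove the functional CLT (Theorem~\ref{thm:functionalCLT}) by upgrading the one-time-marginal convergence of Theorem~\ref{thm:mainthm} to convergence of the whole path $(\bX^{\eps,t}(g))_{t\in[0,T]}$ in $C([0,T])$. Convergence in law in $C([0,T])$ follows from two ingredients: (i) convergence of all finite-dimensional distributions to those of the Gaussian limit process, and (ii) tightness of the family $\{(\bX^{\eps,t}(g))_{t\in[0,T]}\}_{\eps>0}$ in $C([0,T])$. I would establish these in turn.

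\emph{Finite-dimensional distributions.} For times $0\le t_1<\cdots<t_k\le T$, I would show the vector $(\bX^{\eps,t_1}(g),\dots,\bX^{\eps,t_k}(g))$ converges in law to the centered Gaussian vector with the claimed covariance. By the Cramér–Wold device it suffices to treat an arbitrary linear combination $\sum_{i=1}^k \lambda_i \bX^{\eps,t_i}(g)$. Each such combination is, up to the single-time structure already analyzed, a smooth functional of the underlying Gaussian noise $W$; I expect the Malliavin–Stein machinery developed for Theorem~\ref{thm:mainthm} (bounding a distance to normality by the variance of the Malliavin derivative inner product) to apply verbatim to these linear combinations, since linearity is preserved under the mild formulation \eqref{eq:umild}. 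The key computation is the limiting covariance: I would compute $\lim_{\eps\to0}\Cov(\bX^{\eps,s}(g),\bX^{\eps,t}(g))$ using the mild formulation \eqref{eq:umild}, the heat-kernel semigroup property $G_{t-s}*G_{s-r}=G_{t-r}$, and the renormalization identity $\beta\sqrt{\mathbb{E}\sigma(\Xi_{1,2}(2))^2}=2\sqrt{\pi}J(2,1)$, and check it matches the covariance of the Gaussian stochastic integral $\int_0^{t\wedge s}\int_{\R^2}(G_{t-r}*g)(x)(G_{s-r}*g)(x)\,\dd r\,\dd x$ scaled by the square of the Edwards–Wilkinson coefficient.

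\emph{Tightness.} For tightness in $C([0,T])$ I would invoke the Kolmogorov continuity criterion: it suffices to obtain a moment bound of the form $\mathbb{E}\,|\bX^{\eps,t}(g)-\bX^{\eps,s}(g)|^{p}\le C\,|t-s|^{1+\alpha}$ for some $p\ge 2$, $\alpha>0$, and constant $C$ uniform in $\eps$. Since each increment lives in a fixed (low-order) Wiener chaos expansion coming from the Picard/iteration structure of \eqref{eq:umild}, hypercontractivity on Wiener chaos lets me reduce the $L^p$ bound to an $L^2$ bound, so the task becomes estimating $\mathbb{E}\,|\bX^{\eps,t}(g)-\bX^{\eps,s}(g)|^{2}$ with the right power of $|t-s|$. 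I would split the increment into the contribution from integrating over the time slab $(s,t]$ and the contribution from the difference $G_{t-r}-G_{s-r}$ on $(0,s]$, then bound each using the covariance kernel estimates, the $L^\infty$-in-time moment bounds on $u^\eps$ available from well-posedness, and heat-kernel regularity (the standard $\|G_{t-r}-G_{s-r}\|$-type estimates against the smooth test function $g$).

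The main obstacle I anticipate is the tightness increment bound: proving the $|t-s|^{1+\alpha}$ Hölder-type control \emph{uniformly in $\eps$} requires that the logarithmic renormalization $\frac{1}{\sqrt{\log\eps^{-1}}}$ and the $\sqrt{\log\eps^{-1}}$ prefactor in \eqref{eq:bigX} exactly balance the logarithmic divergence of the two-dimensional heat-kernel covariance $\int_0^t\!\!\int (G_{t-r}*\phi^\eps)^2$, so that no hidden $\eps$-dependence leaks into the constant $C$. Controlling the short-time (small $t-s$) behavior of this balance, where the log-divergence is most delicate, is where I would spend the most care; I expect to reuse the uniform-in-$\eps$ second-moment estimates and the Malliavin-derivative bounds established en route to Theorem~\ref{thm:mainthm}, adapting them to increments rather than single times.
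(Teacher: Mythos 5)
Your overall architecture (finite-dimensional distributions plus tightness via a Kolmogorov-type moment criterion) is the same as the paper's, and your f.d.d.\ step is essentially equivalent to what is done there: the paper applies the multivariate Malliavin--Stein bound (Proposition~\ref{pr:SteinsMethodmulti}) to the vector $(Y^{\eps,t_1}(g),\dots,Y^{\eps,t_m}(g))$ and reduces matters to showing $\mathbb{E}\left|C_{i,j}-\langle DY^{\eps,t_i}(g),v_{\eps,t_j}(g)\rangle_{\hilb}\right|^2\to0$, which is exactly the set of cross terms your Cram\'er--Wold linear combinations produce once you write $\sum_i\lambda_i Y^{\eps,t_i}(g)=\delta\left(\sum_i\lambda_i v_{\eps,t_i}(g)\right)$ by linearity of $\delta$ and expand the univariate Stein bound; these cross-time terms are then handled by repeating the Section~\ref{se:mainproof} arguments. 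Your limiting covariance also matches the paper's Lemma~\ref{le:finiteconv}.

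The genuine gap is in your tightness step. You propose to get the required $L^p$ increment bound ($p>2$) from an $L^2$ bound via ``hypercontractivity on Wiener chaos,'' claiming each increment lives in a fixed low-order chaos. That is false for this equation: $\sigma$ is a general Lipschitz nonlinearity, so $\sigma(u^\eps)$ and hence $\bX^{\eps,t}(g)$ have \emph{infinite} Wiener chaos expansions (Picard iteration does not terminate at any finite chaos order), and there is no inequality $\|F\|_p\leq C\|F\|_2$ valid uniformly across chaos orders, since the equivalence-of-norms constant on the $n$-th chaos grows geometrically in $n$. Without that reduction, an $L^2$ increment bound gives only $|t-s|^1$, which is insufficient for Kolmogorov's criterion, so your argument does not close. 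The paper instead proves the $L^p$ bound directly (Proposition~\ref{pr:tigntness}): Burkholder--Davis--Gundy plus Minkowski plus the uniform $p$-th moment bound of Lemma~\ref{le:Vmomentbound}, applied with $p=4$, yielding $\mathbb{E}\left|X^{\eps,t}(g)-X^{\eps,s}(g)\right|^4\leq C(t-s)^2$; this is precisely why the threshold is tied to fourth moments, $\beta<\beta_0(4)$. Incidentally, the logarithmic balancing you flag as the main danger is actually immediate there: the prefactor $(\sqrt{\log\eps^{-1}})^2$ cancels the noise strength $\beta^2/\log\eps^{-1}$ exactly, and the remaining Fourier computation against the smooth test function $g$ produces no $\eps$-dependence. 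The real work is the $p$-th-order moment machinery, not the logs.
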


\subsection{Context}
The Gaussian fluctuations of many SHE and KPZ equations have been widely studied. 

If $\sigma(x)=x$, our SHE \eqref{eq:she}--\eqref{eq:ic} becomes a linear one:
\begin{align}
\partial_t v^\eps (t,x) &= \frac{1}{2} \Delta v^\eps (t,x) + \frac{\beta}{\sqrt{\log \eps^{-1}}}v^\eps(t,x)\dd W_{\phi^\eps}(t,x);
    \label{eq:shelinear}\\
    v^\eps(0,x) &= 1.\label{eq:iclinear}
\end{align}
It is known that there is a phase transition at $\beta = \sqrt{2\pi}$ for this multiplicative SHE in dimension two. 
In \cite{caravenna2017universality}, Caravenna, Sun, and Zygouras showed that for any fixed $T>0$ and $x\in \R^2$, if $\beta \geq \sqrt{2\pi}$,  $v^{\eps}(T,x)$ converges to 0 in probability as $\eps \to 0$; if $\beta<\sqrt{2\pi}$ (known as $\beta$ in the "subcritical" regime),  $v^{\eps}(T,x)$ converges in law to a log-normal random
variable as $\eps \to 0$. They also proved that (see \cite[Theorem~2.17]{caravenna2017universality}) if $\beta<\sqrt{2\pi}$, for any fixed time $T>0$ and any fixed Schwartz function $g\in C_c^\infty(\R^2)$, the random variable
\begin{equation}
    \sqrt{\log \eps^{-1}} \int_{\R^2}\left[v^{\eps}(T, x)-1\right] g(x) \dd x 
    \nonumber
\end{equation}
converges in law to a Gaussian distribution. In other words, the solution random field to \eqref{eq:shelinear}--\eqref{eq:iclinear}, after centering and rescaling, converges (in distribution) to the solution to an Edwards-Wilkinson equation.

We are interested in obtaining an analog result for our nonlinear SHE \eqref{eq:she}--\eqref{eq:ic}. In \cite{caravenna2017universality}, the authors used the Feynman–Kac formula, which is not available for nonlinear case. Therefore, new methods should be used for this problem. In \cite{dunlap2020forward}, Dunlap and Gu characterized the local statistics for the limiting solution field to nonlinear SHE \eqref{eq:she}--\eqref{eq:ic} when $\beta\lipc < \sqrt{2\pi}$. (See Proposition~ \ref{pr:limitObject} below or \cite[Theorem~1.2]{dunlap2020forward}.) They showed that $u^{\eps}(T,x)$ converges in law to a specific random variable $\Xi_{1,2}(2)$ as $\eps \to 0$. They also gave the multipoint statistics in their work.

Based on their findings, we want to study the asymptotics of the (centered and rescaled) solution field to the nonlinear SHE as $\eps \to 0$. One may conjecture an Edwards-Wilkinson limit as in the linear case. In Theorem~\ref{thm:mainthm}, we gave a proof to this limit for a regime $\beta<\beta_0<\frac{\sqrt{2\pi}}{\lipc}$. Whether or not this result can be extended to the entire "subcritical" regime $\beta \in (0, \frac{\sqrt{2\pi}}{\lipc})$ remains unknown. We will discuss it in more detail below.

We proved Theorem~\ref{thm:mainthm} by using the mild formulation \eqref{eq:umild} and the Malliavin-Stein's method. The use of the Malliavin-Stein's method in proving the Gaussian fluctuations for SHE is inspired by works \cite{chen2021central, gu2020nonlinear, huang2020central, huang2020gaussian, nualart2020averaging}. We also use a multivariate version to prove a functional CLT in Theorem~\ref{thm:functionalCLT}. 

Through a Hopf-Cole transformation $h=\log u$, the linear SHE \eqref{eq:shelinear}--\eqref{eq:iclinear} is related to a KPZ-type equation in $d=2$. The Gaussian fluctuations of the KPZ equation in $d=2$ in the subcritical regime is proved in \cite{caravenna2020two} and \cite{gu2020gaussian}. In  \cite{nakajima2021fluctuations}, the authors proved the Gaussian fluctuations of the linear SHE and KPZ equations in dimension two with general initial conditions. One of the reasons why we are interested in the nonlinear SHE is that we hope it will shed lights on the study of more general Hamilton–Jacobi SPDEs:
\begin{equation}
    \partial_t h(t,x) = \frac{1}{2} \Delta h(t,x) + H(\nabla h(t,x) ) + \beta \dd W_{\phi}(t,x), \label{eq:hjkpz}
\end{equation}
where the Hamiltonian $H$ is not necessarily quadratic. The only result in this direction that we are aware of is the study of a two-dimensional anisotropic KPZ equation in \cite{cannizzaro20212d}. While the nonlinear SHE \eqref{eq:she}--\eqref{eq:ic} is more complicated than the linear one, it is still much more approachable than \eqref{eq:hjkpz} because we can make use of the mild formulation \eqref{eq:umild}.

\cite{gu2020nonlinear} presents analog Gaussian fluctuations of the nonlinear SHE in $d\geq 3$. Gaussian fluctuations of the linear SHE and KPZ equations in $d\geq 3$ are studied in \cite{comets2019space, cosco2020law, dunlap2020fluctuations, gu2018edwards} and \cite{magnen2018scaling}. 

There are a few other things we would like to remark before we proceed. 

\subsubsection{$\beta$-region}
In general, for SHE with multiplicative noise and KPZ equations in $d\geq 2$, the noise-strength parameter $\beta$ plays a noticeable role. 

For linear SHE \eqref{eq:shelinear}--\eqref{eq:iclinear}, if $v^{\eps}(t,x)$ converges to 0 in probability as $\eps \to 0$ for any $t>0, x \in \R^2$,  we say that the system is in the \textit{strong disorder} regime. If $v^{\eps}(t,x)$ converges in distribution to a non-degenerate random variable as $\eps \to 0$ for any $t>0, x \in \R^2$, we say that the system is in the \textit{weak disorder} or \textit{subcritical} regime. 

As stated above, in \cite{caravenna2017universality}, the authors proved that $\beta_c = \sqrt{2\pi}$ is the exact critical threshold where the departure from weak disorder to strong disorder takes place. They also proved the Edwards-Wilkinson limit for linear SHE in the entire subcritical regime $\beta \in (0, \sqrt{2\pi})$. (Work \cite{gu2020gaussian} gives an alternative proof to this limiting result for a smaller regime $\beta<\beta_0$, where $\beta_0<\sqrt{2\pi}$.) When $\beta \approx \sqrt{2\pi}$, the solution random field $v^{\eps}(t,x)$ no longer needs any centering and rescaling to obtain a nontrivial limiting structure. We refer to \cite{caravenna2021critical, gu2021moments, caravenna2019moments} for the study in this direction. 

Here we are to prove the Edwards-Wilkinson limit (Theorem~\ref{thm:mainthm}) for nonlinear SHE \eqref{eq:she}--\eqref{eq:ic} in a regime $\beta<\beta_0<\frac{\sqrt{2\pi}}{\lipc}$ smaller than the entire subcritical regime $\beta \in (0, \frac{\sqrt{2\pi}}{\lipc})$. In fact, our proof below ensures that Theorem~\ref{thm:mainthm} would hold for  $\beta_0 = \frac{1}{2\sqrt{6}}\frac{\sqrt{2\pi}}{\lipc}$ (see Remark~\ref{re:threshold} for detail). While we don't believe that this is an optimal $\beta$-region, whether or not (and how) Theorem~\ref{thm:mainthm} can be extended to the entire subcritical regime remains open. 

For SHE in $d \geq 3$, the readers may refer to \cite{mukherjee2016weak} and \cite{cosco2020law} on the weak/strong disorder regions of $\beta$. In particular, \cite{cosco2020law} proved the Edwards-Wilkinson limit for linear SHE in $d\geq 3$ in the entire subcritical regime. For nonlinear SHE in $d\geq 3$,  \cite{gu2020nonlinear} also restricted $\beta$ in a smaller region.

\subsubsection{General initial condition}
Following the same methods as in \cite{dunlap2020forward}, we can generate the results from \cite[Theorem~1.2]{dunlap2020forward} to nonlinear SHE \eqref{eq:she} with general initial condition 
\begin{equation}
    u^{\eps}(0,x)=u_0(x),  \qquad x\in \R^2,\label{eq:genic}
\end{equation}
if we assume $u_0$ satisfies
\begin{equation*}
    0<\inf _{x \in \mathbb{R}^{2}} u_{0}(x) \leq \sup _{x \in \mathbb{R}^{2}} u_{0}(x)<+ \infty. 
\end{equation*}
In fact, it can be shown that $u^{\eps}(T,x) \to \bar{u}(T,x)\Xi_{1,2}(2)$ in law as $\eps \to 0$ where $\bar{u}(T,x) = G_{T}*u_0(x)$ is a deterministic function. One can now go through the exactly same arguments as in the proof of Theorem~\ref{thm:mainthm} here to obtain the Gaussian fluctuations of the SHE \eqref{eq:she} with initial condition \eqref{eq:genic} as $\eps \to 0$. We claim, without proof, that Theorem~\ref{thm:mainthm} still holds, except that the Edwards-Wilkinson equation \eqref{eq:edwardswilkinson} should be replaced by a new SPDE:
\begin{equation*}
     \partial_t U=\frac{1}{2} \Delta U+\beta\sqrt{\mathbb{E} \sigma\left(\Xi_{1, 2}(2)\right)^{2}} \bar{u}(t,x)\dd W(t, x), \qquad U(0, x)=0. 
\end{equation*}
This result aligns with the one in \cite[Theorem~1.3, Remark~1.4]{nakajima2021fluctuations} for linear SHE in dimension two with general initial condition.

\subsection{Organization of the paper}
In Section~\ref{s.pre}, we first scale our SHE \eqref{eq:she}--\eqref{eq:ic} to microscopic variables. Then we introduce the basics of the Malliavin-Stein's method and prove some uniform moment bounds to use in the sequel. We will also state \cite[Theorem~1.2]{dunlap2020forward} in Proposition~\ref{pr:limitObject} for convenience. In Section~\ref{se:mainproof} and \ref{se:clt}, we prove Theorem~\ref{thm:mainthm} and \ref{thm:functionalCLT} respectively.
\subsection*{Acknowledgement}
The author would like to thank her advisor, Yu Gu, for suggesting this problem and providing guidance. She would also like to thank Alex Dunlap for comments and discussion. This work is supported by Yu Gu’s
NSF grant DMS-2203007.

\section{Preliminaries}
\label{s.pre}

\subsection{Scaling}
\label{se:scaling}
We are going to use the Malliavin calculus for Gaussian spaces. In order to simplify the calculation, we want to fix a specific Gaussian space. For this purpose, we introduce another representation of the SHE  \eqref{eq:she}--\eqref{eq:ic}.

We study the following equation in microscopic variables on $(t,x)\in[0,+\infty)\times \R^2$:
\begin{align}
    \partial_{t} \bV^{\eps}(t, x)&=\frac{1}{2} \Delta \bV^{\eps}(t, x)+\frac{\beta}{\sqrt{\log \eps^{-1}}} \sigma\left(\bV^{\eps}(t, x)\right) \dd W_{\phi}(t, x);\label{eq:microscopicSHE}\\
    \bV^{\eps}(0, x)&=1. \label{eq:microscopicSHEic}
\end{align}
Here $ W_{\phi}(t,x) := W_{\phi^1}(t,x) = \int_{\R^2}\phi(x-y)W(t,y)dy$ is defined as in \eqref{eq:colorednoise} with $\eps =1$.
By the scaling property of the space-time white noise, we have
\begin{equation}
    u^\eps (\cdot,\cdot) = \bV^\eps(\frac{\cdot}{\eps^2}, \frac{\cdot}{\eps}) \quad \text{jointly in law}.\nonumber
    \label{eq:timescaleid}
\end{equation}

Since we are only interested in proving convergence in law, it is sufficient to prove Theorem~\ref{thm:mainthm} with $u^\eps(T,x)$ substituted by $\bV^\eps(\frac{T}{\eps^2}, \frac{x}{\eps})$. In fact, as the mild formulation of $\bV^\eps(t,x)$ is
\begin{equation}
    \bV^{\eps}(t,x)
    = 1+\frac{\beta}{\sqrt{\log \eps^{-1}}} \int_{0}^{t} \int_{\R^2} G_{t-s}(x-y) \sigma\left(\bV^{\eps}(s, y)\right) \dd W_{\phi}(s, y),
    \label{eq:bigVmild}
\end{equation}
we have
\begin{equation}
\begin{split}
    &\bX^{\eps,T}(g)\stackrel{\text { law }}{=}  \bY^{\eps,T}(g) :=\sqrt{\log \eps^{-1}} \int_{\R^2}\left[\bV^{\eps}(\frac{T}{\eps^2}, \frac{x}{\eps})-1\right] g(x) \dd x\\
    &= \beta \int_{\R^2} \int_{0}^{\frac{T}{\eps^2}} \int_{\R^2} G_{\frac{T}{\eps^2}-s}(\frac{x}{\eps}-y) \sigma\left(\bV^{\eps}(s, y)\right) \dd W_{\phi}(s, y)g(x) \dd x.\label{eq:bigY}
\end{split}
\end{equation}
For $0\leq t_1 <\dots<t_m \leq T$, we have \begin{equation*}
    \left(X^{\eps, t_1}(g), \dots, X^{\eps, t_m}(g)\right) \stackrel{\text { law }}{=} \left(Y^{\eps, t_1}(g), \dots, Y^{\eps, t_m}(g)\right).\label{eq:multidimeqinlaw}
\end{equation*}
From now on, we will study $Y^{\eps,T}(g)$ instead of $X^{\eps,T}(g)$, etc.
\begin{remark}
As discussed in \cite[Section 1.1]{dunlap2020forward}, in the subcritical regime $\beta \lipc <\sqrt{2\pi}$, the 2-dimensional stochastic heat equation \eqref{eq:microscopicSHE}--\eqref{eq:microscopicSHEic} evolves on an exponential time scale with respect to the strength of the random noise. The exponential time scale explains why we need the tuning $\frac{1}{\sqrt{\log \eps^{-1}}}$ before the noise term in our SHE \eqref{eq:she}--\eqref{eq:ic}.

This scaling is different from the $d \geq 3$ cases. When $d \geq 3$, if the strength of the noise is small enough so that the system is in the weak disorder regime, the SHE \eqref{eq:microscopicSHE}--\eqref{eq:microscopicSHEic} evolves in an 'arbitrarily long' diffusion scale $(\frac{t}{\lambda},\frac{x}{\lambda^2})$ where $\lambda$ is independent of the strength of the noise. See e.g. \cite{mukherjee2016weak}.
\end{remark}

\subsection{The Malliavin calculus}
Now we state the Gaussian space in which we will be working with and give some elementary results from the Malliavin calculus theory. A detailed discussion should be referred to the monograph \cite{nualart2006malliavin}.

Consider a complete probability space $(\Omega, \filt, \Prob)$ generated by the Gaussian noise $\dd W_{\phi}$ as defined above. Let $\hilb$ be the Hilbert space given by the closure of $C_c^{\infty}(\R_{\geq 0} \times \R^2)$ with respect to the inner product 
\begin{equation}
    \langle f,g \rangle_{\hilb} := \int_0^{+\infty} \int_{\R^2} \int_{\R^2} f(s,x)g(s,y)R(x-y) \dd x \dd y \dd s.\nonumber
    \label{eq:innerproductHilbert}
\end{equation}
Define an operator $W: \hilb \to L^2(\Omega)$ as
\begin{equation}
W(h) = \int_0^{+\infty}\int_{\R^2}h(t,x)\dd W_{\phi}(t,x).\nonumber
    \label{eq:isometry}
\end{equation}
Then $W$ is a linear isometry and the Gaussian space $\{W(h): h\in \hilb\}$ is an isonormal Gaussian process over $\hilb$.

Following the conventional notations, we denote the Malliavin derivative operator for the isonormal Guassian process $\{W(h): h\in \hilb\}$ by $D$. Let $\malD^{k,p}, k,p \in \mathbb{Z}^+$ denote the associated Gaussian Sobolev spaces.

For any $t\geq0$, let $\filt_t$ be the $\sigma$-algebra generated by $\{W_{\phi}([0,s]\times A): 0 \leq s \leq t, A\in\mathscr{B}_b(\R^2)\}$ and the null sets of $\filt$. Here $\mathscr{B}_b(\R^2)$ denotes the bounded Borel subsets of $\R^2$. A random field $v = \{v(t,x), t\geq 0, x\in \R^2\}$ is adapted if it is $\filt_t$-measurable for each $t\geq 0$ and $x\in \R^2$.

Let $\delta$ denote the divergence operator. $\delta$ is the adjoint operator of $D$. 
If $v(t,x)$ is an adapted and measurable random field on $[0,+\infty)\times \R^2$ such that 
\begin{equation}
\int_0^{+\infty}\int_{\R^2}\int_{\R^2}\mathbb{E}\left(v(t,x)v(t,y)\right)R(x-y)\dd x \dd y\dd t <+\infty, \nonumber
\label{eq:WalshIntegraliso}
\end{equation}
then $v \in \mathrm{Dom}(\delta)$ and 
\begin{equation}
    \delta(v)=\int_0^{+\infty}\int_{\R^2} v(t,x)\dd W_{\phi}(t,x),
    \label{eq:Skorokhodidentity}\nonumber
\end{equation}
where the last integral is well-defined in the Itô-Walsh sense with the isometry
\begin{equation}
    \mathbb{E}\left(\delta(v)\right)^2 = \int_0^{+\infty}\int_{\R^2}\int_{\R^2}\mathbb{E}\left(v(t,x)v(t,y)\right)R(x-y)\dd x \dd y\dd t 
    \label{eq:itoisometry}\nonumber
\end{equation}
By \cite[Proposition 1.3.8]{nualart2006malliavin}, if $v\in \mathrm{Dom}(\delta)$ and is an adapted process in $\malD^{1,2}$, then 
\begin{equation}
D_{r,z}\delta(v)=v(r,z)+\int^{+\infty}_0\int_{\R^2}D_{r,z}v(s,y)\dd W_\phi(s,y).\nonumber
    \label{eq:commutation}
\end{equation}
 A standard Picard's iteration scheme shows that for all $\eps>0$ and $(t, x) \in[0, +\infty) \times \R^{2}$, $\bV^\eps(t,x) \in \malD^{1, p}$ for all $p \geq 1$. Since $\bV^\eps(t,x)$ has the mild formulation \eqref{eq:bigVmild}, the Malliavin derivative $D_{r,z}\bV^\eps(t,x)$ satisfies the equation
\begin{align}
    D_{r,z}\bV^\eps(t,x) &= \frac{\beta}{\sqrt{\log \eps^{-1}}}\1_{[0,t]}(r)G_{t-r}(x-z)\sigma(\bV^\eps(r,z)) \nonumber\\
    &+ \frac{\beta}{\sqrt{\log \eps^{-1}}}\int_r^t\int_{\R^2}G_{t-s}(x-y)D_{r,z}\sigma(\bV^\eps(s,y))\dd W_\phi(s,y) \nonumber\\
    &= \frac{\beta}{\sqrt{\log \eps^{-1}}}\1_{[0,t]}(r)G_{t-r}(x-z)\sigma(\bV^\eps(r,z)) \nonumber\\
    &+ \frac{\beta}{\sqrt{\log \eps^{-1}}}\int_r^t\int_{\R^2}G_{t-s}(x-y)\Sigma(s,y)D_{r,z}\bV^\eps(s,y)\dd W_\phi(s,y).
    \label{eq:malDofbigV}
\end{align}
Here $\Sigma$ is an adapted process bounded by the Lipschitz constant $\lipc$. If we further assume that $\sigma \in C^1([0,+\infty))$, then $\Sigma(s,y) = \sigma'(\bV^\eps(s,y))$.

In addition, we will need the following version of the Clark-Ocone formula in the proof of Theorem~\ref{thm:mainthm}.
\begin{proposition}[Clark-Ocone Formula]
For $X \in \malD^{1,2}$,
\begin{equation}
    X = \mathbb{E}X+\int_0^{+\infty}\int_{\R^2}\mathbb{E}(D_{r,z}X|\filt_r)\dd W_{\phi}(r,z) \qquad \Prob-\text{almost surely}. \nonumber
\end{equation}\label{pr:clark}
\end{proposition}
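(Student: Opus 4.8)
The plan is to recognize the right-hand side as a stochastic integral and reduce the statement to an $L^2(\Omega)$ identity that can be checked by duality. I would set $u(r,z):=\mathbb{E}(D_{r,z}X\mid\filt_r)$; this field is adapted, since $u(r,\cdot)$ is $\filt_r$-measurable by construction. The first task is to verify $u\in\mathrm{Dom}(\delta)$, so that $\delta(u)$ is well defined and, by the adapted case recalled above, coincides with the Itô--Walsh integral $\int_0^{+\infty}\int_{\R^2}u(r,z)\dd W_\phi(r,z)$. For this I would show that conditioning contracts the $\hilb$-norm: passing to the spatial Fourier transform and using $\widehat R=|\widehat\phi|^2\ge0$, the slice norm $\langle f,f\rangle_{R}=\int_{\R^2}\int_{\R^2}f(x)f(y)R(x-y)\dd x\dd y$ becomes a nonnegative weighted $L^2$-norm in frequency, so the scalar conditional Jensen inequality applied frequency-by-frequency gives $\mathbb{E}\|u\|_\hilb^2\le\mathbb{E}\|DX\|_\hilb^2$, which is finite because $X\in\malD^{1,2}$.

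Next I would prove the identity $X-\mathbb{E}X=\delta(u)$ in $L^2(\Omega)$. Both sides are centered, so it suffices to test them against the family $\{\delta(v):v\text{ adapted and square-integrable}\}$. Fixing such a $v$ and writing $\delta(v)=\int v\,\dd W_\phi$, I would first use that $\delta$ is the adjoint of $D$ to get $\mathbb{E}[X\delta(v)]=\mathbb{E}\langle DX,v\rangle_\hilb$, that is $\mathbb{E}\int\!\int\!\int D_{s,x}X\,v(s,y)R(x-y)\dd x\dd y\dd s$. Since $v(s,\cdot)$ is $\filt_s$-measurable, the tower property yields $\mathbb{E}[D_{s,x}X\,v(s,y)]=\mathbb{E}[u(s,x)\,v(s,y)]$ at each time slice, so $\mathbb{E}[X\delta(v)]=\mathbb{E}\langle u,v\rangle_\hilb$, and the Itô isometry for the two adapted integrands rewrites this as $\mathbb{E}[\delta(u)\delta(v)]$. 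Because $\mathbb{E}[\delta(v)]=0$, the constant $\mathbb{E}X$ contributes nothing, giving $\mathbb{E}[(X-\mathbb{E}X)\delta(v)]=\mathbb{E}[\delta(u)\delta(v)]$ for every admissible $v$.

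To conclude I would invoke the density of $\{\delta(v):v\text{ adapted}\}$ in the space $L^2_0(\Omega)$ of centered square-integrable variables; this is exactly the Itô (martingale) representation property of the filtration $\{\filt_r\}$ generated by $\dd W_\phi$, which follows from the Wiener chaos decomposition because iterated Itô--Walsh integrals of adapted integrands exhaust each homogeneous chaos. Granting this, $X-\mathbb{E}X-\delta(u)$ is centered and orthogonal to a dense subset of $L^2_0(\Omega)$, hence vanishes in $L^2(\Omega)$, which is the claimed formula, holding $\Prob$-almost surely. I expect the main obstacle to be this representation/density input together with the domain check $u\in\mathrm{Dom}(\delta)$; once the adjoint relation and the Itô isometry stated earlier are in place, the duality computation itself is routine.
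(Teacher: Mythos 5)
Your argument is essentially correct, but it is genuinely different from what the paper does: the paper does not prove this proposition at all, it simply defers to \cite[Proposition~6.3]{chen2021spatial}. Your route --- set $u(r,z)=\mathbb{E}(D_{r,z}X\mid\filt_r)$, check $u\in\mathrm{Dom}(\delta)$ by a conditional-Jensen contraction (legitimate here because $\hat{R}=|\hat{\phi}|^2\geq 0$ turns the spatially correlated norm into a nonnegative weighted $L^2$-norm in frequency), identify $X-\mathbb{E}X$ with $\delta(u)$ by testing against $\delta(v)$ for adapted $v$ via the adjoint relation, the tower property, and the It\^o--Walsh isometry, and close the argument with the density of adapted stochastic integrals in $L^2_0(\Omega)$ --- is the standard textbook proof of Clark--Ocone, transplanted to the white-in-time, colored-in-space setting; it works precisely because the noise is white in time, so adaptedness and the isometry behave as in the Brownian case, and there is no circularity since the representation/density input is obtained independently from the chaos expansion (each $n$-th chaos element $I_n(f_n)$ is $\delta$ of the adapted process $(r,z)\mapsto n\,I_{n-1}\bigl(f_n(\cdot,(r,z))\1_{[0,r]^{n-1}}(\cdot)\bigr)$, so these integrals exhaust $L^2_0$). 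What the paper's citation buys is that the technical points you gloss over are handled in the reference: for general $X\in\malD^{1,2}$ the pointwise derivative $D_{r,z}X$ is only defined as an $\hilb$-valued random variable, so the conditional expectation must be taken in an $\hilb$-valued sense (or one argues first for smooth cylindrical $X$ and extends by the very contraction bound you proved), and the Fubini interchanges in your tower-property step require the integrability that comes from first taking $v$ elementary. What your route buys is self-containedness and transparency of the logical inputs; none of the gaps just named is fatal, and each is standard to fill.
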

\begin{proof}
See \cite[Proposition 6.3]{chen2021spatial}.
\end{proof}

\subsection{Moment bounds}
We will use the following two uniform moment bounds in the proof of Theorem~\ref{thm:mainthm}.

Note that in both lemmas we are not able to cover the entire subcritical regime $0<\beta<\frac{\sqrt{2\pi}}{\lipc}$. In fact, the extension of Lemma~\ref{le:Vmomentbound} to the entire subcritical regime should be true and its counterpart in the 2D directed polymer environment is proved in \cite{lygkonis2021}. However, it remains unknown whether  Lemma~\ref{le:MallivianMomentbound} can be extended to the entire subcritical regime.

We first give a uniform moment bound on the mild solution to the SHE.
\begin{lemma}\label{le:Vmomentbound}
For all $p\geq 2$, there exists $0<\beta_0(p) \leq \frac{\sqrt{2\pi}}{\lipc} $ depending only on $p$, such that for all $T>0$ and $\beta < \beta_0(p)$, there exists $\eps_0>0$ (depending on $\phi, T$, $\beta\lipc$ and $p$) and $C>0$ (depending on $\beta\lipc$ and $p$) such that
\begin{equation}
\sup_{t\in [0,T], x \in \R^2}\mathbb{E}\left|u^\eps(t,x)\right|^p\leq C^p, \label{eq:umomentbound}
\end{equation}
 for all $\eps<\eps_0$. 
\end{lemma}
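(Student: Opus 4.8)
The plan is to establish the uniform moment bound via a standard Picard-iteration-and-Gronwall argument applied to the mild formulation, but carried out carefully so that the logarithmic tuning factor $\frac{1}{\sqrt{\log\eps^{-1}}}$ exactly compensates the logarithmic divergence of the heat-kernel self-convolution in $d=2$. Since $u^\eps(t,x)$ and $\bV^\eps(\tfrac{t}{\eps^2},\tfrac{x}{\eps})$ agree in law, I would work with the microscopic formulation \eqref{eq:bigVmild} and bound $\sup_{(s,y)}\mathbb{E}|\bV^\eps(s,y)|^p$ on the rescaled time horizon $[0,T\eps^{-2}]$. Writing $f_p(t):=\sup_{x}\mathbb{E}|\bV^\eps(t,x)|^p$, I would apply the Burkholder--Davis--Gundy inequality to the stochastic integral in \eqref{eq:bigVmild}, using the spatial correlation $R$ and the Lipschitz/linear-growth bound $|\sigma(\bV^\eps)|\le\lipc|\bV^\eps|$ coming from $\sigma(0)=0$, to obtain an inequality of the schematic form
\begin{equation*}
f_p(t) \leq C_p + \frac{(\beta\lipc)^2}{\log\eps^{-1}} \, C_p \int_0^t \kappa^\eps(t-s)\, f_p(s)\,\dd s,
\end{equation*}
where $\kappa^\eps(\tau)$ is the kernel built from the product of two heat kernels integrated against $R(x-y)$.

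The key quantitative input is the control of the convolution kernel. Using $\int_{\R^2}\int_{\R^2}G_{\tau}(x-y')G_{\tau}(x-y)R(y-y')\,\dd y\,\dd y'\le \int R\cdot \sup_x G_{2\tau}(x)\lesssim \frac{1}{\tau}$ for the $d=2$ heat kernel, the total mass of $\kappa^\eps$ over the macroscopic window behaves like $\int_0^{T\eps^{-2}}\tfrac{1}{\tau}\,\dd\tau \asymp \log\eps^{-1}$, so that after division by $\log\eps^{-1}$ the effective coupling constant is $O((\beta\lipc)^2)$, uniformly in $\eps$. This is precisely the mechanism that makes the bound $\eps$-uniform. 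I would then linearize: because $\sigma$ has at most linear growth, the renewal-type inequality above can be iterated (a Volterra/renewal argument, or equivalently a comparison with the series $\sum_n \frac{(C(\beta\lipc)^2)^n}{(\log\eps^{-1})^n}(\text{$n$-fold kernel mass})$). The $n$-fold convolution mass is controlled by the $n$-fold integral of $\prod 1/\tau_i$ over the simplex, which contributes $(\log\eps^{-1})^n/n!$ up to constants; dividing by $(\log\eps^{-1})^n$ yields a convergent geometric/exponential series in $(\beta\lipc)^2$ provided $(\beta\lipc)^2$ is below a $p$-dependent threshold $\beta_0(p)^2\le \tfrac{2\pi}{\lipc^2}$.

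I expect the main obstacle to be making the renewal estimate genuinely uniform in $\eps$ rather than merely $\eps$-dependent, i.e.\ tracking constants so that the threshold $\beta_0(p)$ and the final constant $C$ do not degrade as $\eps\downarrow 0$. The delicate point is that the kernel mass is $\log\eps^{-1}$ only to leading order; the subleading corrections (the short-time regularization by the mollifier $\phi$ near $\tau\sim\eps^2$, where $R(\cdot/\eps)\eps^{-2}$ replaces a naive delta and the $1/\tau$ bound saturates) must be absorbed, which is where the hypothesis $\eps<\eps_0$ with $\eps_0$ depending on $\phi,T,\beta\lipc,p$ enters. A clean way to handle this is to split the time integral into a small-$\tau$ part near the correlation scale $\eps^2$ and the bulk, bounding the former using the $C_c^\infty$ smoothness and compact support of $R$. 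The higher the moment $p$, the larger the BDG constant $C_p$, which forces $\beta_0(p)$ to shrink with $p$; I would keep the dependence explicit so that the lemma as stated (with $\beta_0(p)$ decreasing in $p$) follows, and remark that the optimal threshold—conjectured to be the full subcritical value $\tfrac{\sqrt{2\pi}}{\lipc}$, as known in the directed-polymer setting \cite{lygkonis2021}—is not recovered by this crude moment method.
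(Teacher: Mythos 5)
Your overall strategy coincides with the paper's: work with the mild formulation, apply Burkholder--Davis--Gundy together with the Lipschitz/linear-growth bound on $\sigma$, derive a renewal inequality whose kernel mass over the relevant time horizon is $\asymp\log\eps^{-1}$ and is therefore cancelled by the tuning factor, then iterate. However, there is a genuine error at the heart of your convergence step: the claim that the $n$-fold simplex integral of $\prod_i 1/\tau_i$ (suitably truncated) contributes $(\log\eps^{-1})^n/n!$. There is no factorial gain here. Because the mass of the kernel $\frac{1}{\tau}\wedge\frac{c}{\eps^2}$ is spread logarithmically over all scales between $\eps^2$ and $T$, the simplex constraint $\sum_i\tau_i\le T$ is essentially inactive: already the smaller region $\{\tau_i\le T/n \ \forall i\}$ gives $\prod_i\int_0^{T/n}\left(\frac{1}{\tau}\wedge\frac{c}{\eps^2}\right)\dd\tau=\left(\log\frac{cT}{n\eps^{2}}+O(1)\right)^n\sim(2\log\eps^{-1})^n$ as $\eps\downarrow0$, so the simplex integral is comparable to the full unordered product, not to that product divided by $n!$. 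This is exactly how the paper proceeds: Lemma~\ref{lemma:2mobdlemma} bounds the ordered integral by $\left[\int_0^t\left(\frac{1}{s}\wedge\frac{4\pi c}{\eps^2}\right)\dd s\right]^j$, and the resulting series is genuinely geometric with ratio tending to $\frac{b^2}{2\pi}$ --- which is precisely why a threshold on $\beta$ is needed at all. Your factorial claim, if true, would prove too much: the series $\sum_n (C(\beta\lipc)^2)^n/n!$ converges for every $\beta$, so the lemma would hold with no restriction on $\beta$ whatsoever, contradicting the known phase transition at $\beta=\sqrt{2\pi}$ for the linear case $\sigma(x)=x$, where moments blow up as $\eps\to0$ in the supercritical regime. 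Your proposal avoids a false conclusion only because you also, inconsistently with the $1/n!$, impose the threshold at the end.

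A second, more minor point: the linear renewal inequality does not close for $f_p(t)=\sup_x\mathbb{E}|\bV^\eps(t,x)|^p$ itself. BDG followed by Minkowski's integral inequality (needed to pull the expectation inside the $(p/2)$-power of the quadratic variation) naturally produces a linear inequality for $\left(\mathbb{E}|u^\eps(t,0)|^p\right)^{2/p}$, which is the quantity the paper iterates; for $f_p$ one only obtains an inequality of the form $f_p\le\left(a+b\int\kappa\, f_p^{2/p}\right)^{p/2}$, which is not of Gronwall/renewal type. This is easily repaired by working with the $2/p$-power throughout (and using spatial stationarity to drop the supremum, as the paper does), but the schematic inequality as you wrote it is not the one the argument yields.
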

\begin{proof}
First we notice that, for any $\eps, t>0$, $u^\eps(t,\cdot)$ is stationary in the $x$-variable, so we only need to prove for $u^\eps(t,0)$. 

By the mild formulation \eqref{eq:umild}, the triangle inequality and the inequality $2ab\leq (ca)^2+(\frac{b}{c})^2$ for all $c>1$, we have that for any arbitrary $\alpha>1$,
\begin{align}
    &\left(\mathbb{E}\left|u^\eps(t,0)\right|^p\right)^{\frac{2}{p}} \nonumber \\
    &\leq \frac{\alpha}{\alpha-1}+\frac{\alpha\beta^2}{\log \eps^{-1}}\left( \mathbb{E}\left(\int_{0}^{t} \int_{\R^2} G_{t-s}(y) \sigma\left(u^{\eps}(s, y)\right) \dd W_{\phi^\eps}(s, y)\right)^p\right)^{\frac{2}{p}} \nonumber
\end{align}
Apply the Burkholder-Davis-Gundy inequality, we have that with $c_p = \frac{p(p-1)}{2}$,
\begin{align}
    &\left(\mathbb{E}\left|u^\eps(t,0)\right|^p\right)^{\frac{2}{p}} \nonumber\\
    &\leq \frac{\alpha}{\alpha-1} +c_p\frac{\alpha\beta^2}{\log \eps^{-1}}\nonumber \\
    &\cdot \int_{0}^{t}\left(\mathbb{E}\left( \int_{\R^2}\int_{\R^2} \left|\prod_{i=1}^2 G_{t-s}(y_i) \sigma(u^{\eps}(s, y_i))\right|\frac{1}{\eps^2}R(\frac{y_1-y_2}{\eps}) \dd y_1 \dd y_2 \right)^{\frac{p}{2}}\right)^{\frac{2}{p}} \dd s\nonumber
\end{align}
The H\"older's inequality gives that
\begin{align}\label{eq:uholder}
    \left(\mathbb{E}\left|\sigma(u^\eps(t,x))\sigma(u^\eps(t,y))\right|^{\frac{p}{2}} \right)^{\frac{2}{p}}&\leq \left(\mathbb{E}\left|\sigma(u^\eps(t,x))\right|^p\mathbb{E}\left|\sigma(u^\eps(t,y))\right|^p\right)^{1/p}\\ &= (\mathbb{E}\left|\sigma(u^\eps(t,x))\right|^p)^{\frac{2}{p}}\leq \lipc^2\left(\mathbb{E}\left|u^\eps(t,0)\right|^p\right)^{\frac{2}{p}} \nonumber
\end{align}
Combined the above results. Use the Minkowski inequality and a change of variable $y_1-y_2 \mapsto y$, we obtain
\begin{align*}
    &\left(\mathbb{E}\left|u^\eps(t,0)\right|^p\right)^{\frac{2}{p}}  \\
    & \leq \frac{\alpha}{\alpha-1} +c_p\frac{\alpha\beta^2}{\log \eps^{-1}}  \int_{0}^{t} \int_{\R^2} \int_{\R^2} \left(\mathbb{E}\left|\prod_{i=1}^{2} G_{t-s}(y_i) \sigma\left(u^{\eps}(s, y_i)\right)\right|^{\frac{p}{2}}\right)^{\frac{2}{p}}\frac{1}{\eps^2}R(\frac{y_1-y_2}{\eps})\dd y_1 \dd y_2 \dd s \\
    & \leq \frac{\alpha}{\alpha-1} +c_p\frac{\alpha\beta^2\lipc^2}{\log \eps^{-1}}  \int_{0}^{t} \int_{\R^2} \int_{\R^2}
    G_{t-s}(y_1)G_{t-s}(y_2)\left(\mathbb{E}\left|u^\eps(t,0)\right|^p\right)^{\frac{2}{p}}\frac{1}{\eps^2}R(\frac{y_1-y_2}{\eps})\dd y_1 \dd y_2 \dd s \\
    &= \frac{\alpha}{\alpha-1} +c_p\frac{\alpha\beta^2\lipc^2}{\log \eps^{-1}} \int_{0}^{t} \int_{\R^2} G_{2(t-s)}(y)\left(\mathbb{E}\left|u^\eps(s,0)\right|^p\right)^{\frac{2}{p}}\frac{1}{\eps^2}R(\frac{y}{\eps})\dd y \dd s.
\end{align*}
It is easy to see that with $R(\cdot) \leq \|\phi\|_{\infty}$ and $\int_{\R^2}\frac{1}{\eps^2}R(\frac{y}{\eps})dy=1$, we have the elementary inequality \begin{equation*}
    \int_{\R^2} G_{2(t-s)}(y)\frac{1}{\eps^2}R(\frac{y}{\eps})\dd y \leq \frac{1}{4\pi(t-s)}\wedge \frac{\|\phi\|_{\infty}}{\eps^2}.
\end{equation*}
By the Lemma~\ref{lemma:2mobdlemma} below, if for some $\alpha>1$, \begin{equation}
    c_p \alpha \beta^2 \lipc^2 < 2 \pi,\label{eq:suffconlp} 
\end{equation} there exists an $\eps_0>0$ and $C_0=C_0(c_p, \alpha, \beta \lipc)>0$ such that for all $\eps <\eps_0$
\begin{equation}
    \left(\mathbb{E}\left|u^\eps(t,0)\right|^p\right)^{\frac{2}{p}} \leq \frac{\alpha}{\alpha-1}C_0.\nonumber
\end{equation}
Since $\alpha>1$ is arbitrary, condition \eqref{eq:suffconlp} is valid if and only if 
\begin{equation*}
    \beta<\beta_0(p)=\frac{\sqrt{2\pi}}{\sqrt{c_p}\lipc}.
\end{equation*}
In particular, $\beta_0(2) = \frac{\sqrt{2\pi}}{\lipc} $ and $\beta_0(4) = \frac{1}{\sqrt{6}}\frac{\sqrt{2\pi}}{\lipc}$. We can choose an appropriate $\alpha$ according to $p$ and $\beta\lipc$.
\label{lemma:2momentbound}
\end{proof}

As an immediate corollary, we have that under the same condition as in Lemma~\ref{le:Vmomentbound},
\begin{equation}
\sup_{t\in [0,T], x \in \R^2}\mathbb{E}\left|\bV^\eps(\frac{t}{\eps^2},\frac{x}{\eps})\right|^p\leq C^p \quad \text{for all } \eps<\eps_0. \label{eq:bigVmobd}
\end{equation}

\begin{lemma}
Fix constants $0<b<\sqrt{2\pi}$ and $a, c,T>0$. There exists an $\eps_0>0$ (depending on $b, c$ and $T$), such that for all $\eps \in (0,\eps_0)$ the following holds:

If $f^{\eps}:[0,T] \to [0,\infty)$ is a function such that for all $t\in [0,T]$,
\begin{equation*}
    f^{\eps}(t) \leq a+\frac{b^2}{\log \eps^{-1}}\int_{0}^{t} f^{\eps}(s) \left(\frac{1}{4\pi(t-s)}\wedge \frac{c}{\eps^2}\right) \dd s,
\end{equation*}
then for all $t\in [0,T]$, we have 
\begin{equation}
    f^{\eps}(t)\leq aC,
    \label{eq:boundofiteratedintegral}
\end{equation}
for some uniform constant $C>0$ depending only on $b$.
\label{lemma:2mobdlemma}
\end{lemma}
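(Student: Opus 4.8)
The plan is to read the hypothesis as a singular Volterra--Gronwall inequality and to exploit the exact cancellation between the logarithmic divergence of the kernel's mass and the $\frac{1}{\log \eps^{-1}}$ prefactor. The whole lemma rests on one computation: the total mass of the kernel $K^\eps(r):=\frac{1}{4\pi r}\wedge\frac{c}{\eps^2}$. Splitting at the crossover point $r_0=\frac{\eps^2}{4\pi c}$ (where $\frac{1}{4\pi r_0}=\frac{c}{\eps^2}$), I would compute, for $t>r_0$,
\begin{equation*}
\int_0^t K^\eps(r)\,\dd r=\frac{c}{\eps^2}r_0+\int_{r_0}^t\frac{\dd r}{4\pi r}=\frac{1}{4\pi}+\frac{1}{4\pi}\log\frac{4\pi c t}{\eps^2}\le \frac{\log\eps^{-1}}{2\pi}+D,
\end{equation*}
uniformly in $t\in[0,T]$, with $D=\frac{1}{4\pi}\bigl(1+\log(4\pi c T)\bigr)$ a constant depending only on $c,T$ (the remaining case $t\le r_0$ gives the smaller bound $\frac{ct}{\eps^2}\le\frac{1}{4\pi}$). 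Writing $A_\eps:=\sup_{0\le t\le T}\int_0^t K^\eps(r)\,\dd r$, we obtain $\frac{b^2}{\log\eps^{-1}}A_\eps\le \frac{b^2}{2\pi}+\frac{b^2 D}{\log\eps^{-1}}$. Since $b<\sqrt{2\pi}$ forces $\frac{b^2}{2\pi}<1$, I would fix the target ratio $\theta:=\frac12\bigl(1+\frac{b^2}{2\pi}\bigr)\in\bigl(\frac{b^2}{2\pi},1\bigr)$ and then choose $\eps_0=\eps_0(b,c,T)$ small enough that $\frac{b^2 D}{\log\eps^{-1}}\le\frac12\bigl(1-\frac{b^2}{2\pi}\bigr)$ for all $\eps<\eps_0$; this yields $\frac{b^2}{\log\eps^{-1}}A_\eps\le\theta<1$.

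Before running the contraction I would record the a priori finiteness of $M:=\sup_{0\le t\le T}f^\eps(t)$. This is immediate from the crude pointwise bound $K^\eps\le\frac{c}{\eps^2}$, under which the hypothesis gives $f^\eps(t)\le a+\frac{b^2c}{\eps^2\log\eps^{-1}}\int_0^t f^\eps(s)\,\dd s$; the classical Gronwall inequality then produces $f^\eps(t)\le a\exp\bigl(\frac{b^2cT}{\eps^2\log\eps^{-1}}\bigr)<\infty$. The point is not the size of this bound (which blows up as $\eps\downarrow0$) but merely that $M<\infty$, so that the supremum can be carried through the integral in the next step. In the applications $f^\eps$ is a moment function and this finiteness is automatic.

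The contraction step then closes the argument. Bounding $f^\eps(s)\le M$ inside the integral and changing variables $r=t-s$,
\begin{equation*}
f^\eps(t)\le a+\frac{b^2}{\log\eps^{-1}}\,M\int_0^t K^\eps(t-s)\,\dd s\le a+\frac{b^2}{\log\eps^{-1}}\,M\,A_\eps\le a+\theta M
\end{equation*}
for every $t\in[0,T]$. Taking the supremum over $t$ gives $M\le a+\theta M$, hence $M\le\frac{a}{1-\theta}=aC$ with $C=\frac{1}{1-\theta}=\frac{4\pi}{2\pi-b^2}$ depending only on $b$, which is exactly \eqref{eq:boundofiteratedintegral}.

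The main obstacle, and really the entire content of the lemma, is the kernel-mass computation in the first paragraph: one must see that the singular heat kernel contributes mass growing precisely like $\frac{\log\eps^{-1}}{2\pi}$, so that after multiplication by $\frac{b^2}{\log\eps^{-1}}$ the effective coupling is exactly $\frac{b^2}{2\pi}$, subcritical if and only if $b<\sqrt{2\pi}$. The remaining two steps are standard; the only delicate bookkeeping is that the $O(1)$ correction $D$ to the mass must be absorbed by shrinking $\eps$ (which is what forces $\eps_0$ to depend on $c$ and $T$), and that all estimates are uniform in $t\in[0,T]$ because only the supremal mass $A_\eps$ is ever used.
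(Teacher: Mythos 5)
Your proof is correct, but it closes the Volterra inequality by a genuinely different mechanism than the paper. Both arguments pivot on the identical kernel-mass computation $\int_0^t\bigl(\tfrac{1}{4\pi s}\wedge\tfrac{c}{\eps^2}\bigr)\dd s=\tfrac{1}{4\pi}\bigl(1+\log\tfrac{4\pi ct}{\eps^2}\bigr)$, whose product with $\tfrac{b^2}{\log\eps^{-1}}$ tends to $\tfrac{b^2}{2\pi}<1$; this is the heart of the lemma in either version. The paper then iterates the inequality infinitely many times, bounds each $j$-fold simplex integral by the $j$-th power of the one-dimensional mass, and sums the resulting geometric series, whereas you make a single pass: bound $f^{\eps}$ by $M=\sup_{[0,T]}f^{\eps}$ inside the integral, obtain $M\le a+\theta M$ with $\theta<1$ for $\eps<\eps_0$, and absorb. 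Your route is more elementary and produces an explicit constant $C=\tfrac{4\pi}{2\pi-b^2}$, but it requires the a priori finiteness $M<\infty$, which you supply via a crude Gronwall bound (legitimate once $f^{\eps}$ is measurable and locally integrable, which follows from finiteness of the hypothesized convolution integral since the kernel is bounded below on $[0,T]$, and is automatic for the moment functions in the application). It is worth noting that the paper's argument tacitly needs the same input: dropping the remainder term after $n$ iterations requires it to vanish as $n\to\infty$, which again uses boundedness of $f^{\eps}$ for fixed $\eps$. Indeed, for completely arbitrary nonnegative $f^{\eps}$ the statement is vacuously fragile --- e.g.\ $f^{\eps}(s)=1/s$ for $s>0$, $f^{\eps}(0)=0$, makes the right-hand side infinite for every $t>0$, so the hypothesis holds while the conclusion fails --- so your explicit treatment of this point is, if anything, slightly more careful than the paper's.
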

\begin{proof}
Define $[0,t]_<^j=\{(s_{1},\ldots,s_{j})\in[0,t]^{j}\mid s_{1}\le\cdots\le s_{j}\}$, we have
\begin{align}
		f^{\eps}(t) & \leq a\sum_{j=0}^{\infty}\frac{b^{2j}}{(4\pi\log\eps^{-1})^{j}}\int_{[0,t]_<^j}\prod_{k=1}^{j}\left(\frac{1}{s_{k+1}-s_{k}}\wedge \frac{4\pi c}{\eps^2}\right)\dd s_{1}\cdots\dd s_{j}\nonumber\\
		&\leq  a\sum_{j=0}^{\infty}\frac{b^{2j}}{(4\pi\log\eps^{-1})^{j}}\int_{[0,t]^j}\prod_{k=1}^{j}\left(\frac{1}{r_j}\wedge \frac{4\pi c}{\eps^2}\right)\dd r_{1}\cdots\dd r_{j}\nonumber\\
		&= a\sum_{j=0}^{\infty}\frac{b^{2j}}{(4\pi\log\eps^{-1})^{j}}\left[\int_0^t \left(\frac{1}{s}\wedge \frac{4\pi c}{\eps^2}\right)\dd s\right]^j\label{eq:2mobdinequality}
\end{align}
Notice that when $t>0$,
\begin{equation*}
    \int_{0}^{t} \left(\frac{1}{s}\wedge \frac{4\pi c}{\eps^2}\right)\dd s = \int_0^{\frac{\eps^2}{4\pi c}} \frac{4\pi c}{\eps^2} \dd s+ \int_{\frac{\eps^2}{4\pi c}}^t \frac{1}{s}\dd s = \log{\frac{4\pi ct}{\eps^2}}+1,
\end{equation*}
and
\begin{equation}
\lim_{\eps \to 0} \frac{b^2}{4\pi\log\eps^{-1}}\left(\log{\frac{4\pi cT}{\eps^2}}+1\right) = \frac{b^2}{2\pi}. \nonumber
\end{equation}
Therefore,  if $b< \sqrt{2\pi}$, there exists an $\eps_0>0$ depending on $b,c$ and $T$, such that for all $\eps \in (0,\eps_0)$, the sum of the series in \eqref{eq:2mobdinequality} converges for all $t\in [0,T]$ and is uniformly bounded by some constant $C$ depending only on $b$.
\end{proof}

Next we give the uniform moment bounds for the Malliavin derivatives.
\begin{lemma}\label{le:MallivianMomentbound}
For all $p\geq 2$, there exists $0<\beta_0(p)\leq\frac{\sqrt{2\pi}}{\lipc}$ depending only on $p$, such that for all $T>0$ and $\beta<\beta_0(p)$, there 
exists $0<\eps_0'\leq\eps_0$ (depending on $\phi$, $T$, $\beta\lipc$ and $p$) and $C'>0$ (depending on $\beta \lipc$ and $p$) such that for all $(t,x)\in (0,\frac{T}{\eps^2}]\times\R^2$ and all $(r,z)\in [0,t)\times \R^2$,
\begin{equation}
\left(\mathbb{E}\left|D_{r,z}\bV^\eps(t,x)\right|^p\right)^{\frac{1}{p}}\leq \frac{C'}{\sqrt{\log \eps^{-1}}}G_{t-r}(x-z), \quad \text{ for all } \eps<\eps_0'.\nonumber \label{eq:malliavinderivativeMoment}
\end{equation}
\end{lemma}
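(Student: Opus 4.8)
The plan is to convert the linear equation \eqref{eq:malDofbigV} satisfied by $D_{r,z}\bV^\eps(t,x)$ into a closed renewal inequality for the normalized quantity
\[
g^\eps(t):=\sup_{x,z\in\R^2}\frac{\left(\mathbb{E}\left|D_{r,z}\bV^\eps(t,x)\right|^p\right)^{2/p}}{G_{t-r}(x-z)^2}\qquad(r\text{ fixed}),
\]
and to show $g^\eps(t)\le C'^2/\log\eps^{-1}$ uniformly over $t\le T/\eps^2$. Write $N(s,y):=(\mathbb{E}|D_{r,z}\bV^\eps(s,y)|^p)^{1/p}$. The first term on the right of \eqref{eq:malDofbigV} has $L^p(\Omega)$-norm at most $\frac{\beta\lipc}{\sqrt{\log\eps^{-1}}}G_{t-r}(x-z)(\mathbb{E}|\bV^\eps(r,z)|^p)^{1/p}$, using $\sigma(0)=0$ and the Lipschitz bound; by the corollary \eqref{eq:bigVmobd} of Lemma~\ref{le:Vmomentbound} (applicable since the $\beta$-constraint below is stronger) this is $\le\frac{\beta\lipc C}{\sqrt{\log\eps^{-1}}}G_{t-r}(x-z)$.

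Next I proceed exactly as in the proof of Lemma~\ref{le:Vmomentbound}: apply Burkholder--Davis--Gundy (constant $c_p$) to the stochastic-integral term, Minkowski's integral inequality to bring the $L^{p/2}(\Omega)$-norm inside the space-time integral, and then Hölder together with $|\Sigma|\le\lipc$ to replace the product by $\lipc^2 N(s,y_1)N(s,y_2)$. Combining the two contributions with $(a+b)^2\le\alpha a^2+\frac{\alpha}{\alpha-1}b^2$ ($\alpha>1$), the heart of the argument is the Chapman--Kolmogorov (Gaussian bridge) identity
\[
G_{t-s}(x-y_1)\,G_{s-r}(y_1-z)=G_{t-r}(x-z)\,G_{v}\!\Big(y_1-\tfrac{(s-r)x+(t-s)z}{t-r}\Big),\qquad v=v(s):=\tfrac{(t-s)(s-r)}{t-r}.
\]
Inserting the bound $N(s,y_i)\le\sqrt{g^\eps(s)}\,G_{s-r}(y_i-z)$ and integrating in $y_1,y_2$ (using $\int G_v(u)G_v(u-w)\,\dd u=G_{2v}(w)$) factors out $G_{t-r}(x-z)^2$ and reduces the spatial integral to $\int_{\R^2}R(w)G_{2v}(w)\,\dd w\le\frac{1}{4\pi v}\wedge\|R\|_\infty$. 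Dividing by $G_{t-r}(x-z)^2$ and taking the supremum over $x,z$ gives the self-contained inequality
\[
g^\eps(t)\le\alpha\frac{\beta^2\lipc^2C^2}{\log\eps^{-1}}+\frac{\alpha}{\alpha-1}\frac{\beta^2\lipc^2c_p}{\log\eps^{-1}}\int_r^t g^\eps(s)\Big(\tfrac{1}{4\pi v(s)}\wedge\|R\|_\infty\Big)\,\dd s.
\]

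To close this I bound $g^\eps(s)$ by $\sup_{r\le\tau\le t}g^\eps(\tau)$ and use the elementary computation $\int_r^t(\frac{1}{4\pi v(s)}\wedge\|R\|_\infty)\,\dd s=\frac1{\pi}\log\eps^{-1}+O(1)$ (via $\frac1{4\pi v}=\frac1{4\pi(t-s)}+\frac1{4\pi(s-r)}$), which is valid because $t-r$ runs up to the microscopic horizon $T/\eps^2$; this is the microscopic analogue of Lemma~\ref{lemma:2mobdlemma} and the exact place where one power of $\log\eps^{-1}$ is produced. Taking the supremum over $\tau\le t$ yields $\sup_\tau g^\eps\le\alpha\frac{\beta^2\lipc^2C^2}{\log\eps^{-1}}+\frac{\alpha}{\alpha-1}\beta^2\lipc^2c_p\big(\frac1\pi+o(1)\big)\sup_\tau g^\eps$. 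For $\beta$ small enough that $\frac{\alpha}{\alpha-1}\beta^2\lipc^2c_p<\pi$ for some $\alpha>1$, i.e. $\beta<\beta_0(p):=\frac{\sqrt\pi}{\sqrt{c_p}\lipc}$ after letting $\alpha\to\infty$, the coefficient is $<1$ for all small $\eps$, the term absorbs, and $g^\eps(t)\le C'^2/\log\eps^{-1}$ uniformly in $r$, $x,z$ and $t\le T/\eps^2$, which is the claim; note $\beta_0(p)\le\frac{\sqrt{2\pi}}{\lipc}$ and $\beta_0(p)\le$ the threshold of Lemma~\ref{le:Vmomentbound}, so $\eps_0'\le\eps_0$.

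The main obstacle is twofold. Conceptually, the crux is the microscopic-scale renewal bound: because the equation runs up to time $T/\eps^2$, the time integral of the capped heat kernel is $\Theta(\log\eps^{-1})$, so the prefactor $\frac{\beta^2}{\log\eps^{-1}}$ makes the effective renewal constant $\Theta(1)$ rather than vanishing. This is precisely why the subcritical smallness $\beta<\beta_0(p)$ is needed for the fixed point to close, and it is also what leaves exactly one surviving factor $\frac{1}{\sqrt{\log\eps^{-1}}}$ in the final bound. The technical point requiring care is the a priori finiteness of $g^\eps(s)$ before the Gronwall step can be invoked; I would secure this by first proving the bound for the Picard iterates $\bV^\eps_n$ (whose Malliavin derivatives satisfy the analogue of \eqref{eq:malDofbigV} with $D_{r,z}\bV^\eps_{n-1}$ in the integrand) with a constant uniform in $n$, and then passing to the limit.
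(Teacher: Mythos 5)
Your proof is correct, and up to the bilinear renewal inequality it coincides with the paper's argument: both start from \eqref{eq:malDofbigV}, split off the deterministic term with the weighted Young inequality, bound it via $\sigma(0)=0$, the Lipschitz property and \eqref{eq:bigVmobd}, and then apply Burkholder--Davis--Gundy, Minkowski and H\"older with $|\Sigma|\le\lipc$ to reach the same quadratic inequality for $(\mathbb{E}|D_{r,z}\bV^\eps(t,x)|^p)^{1/p}$. From there the two routes genuinely diverge. The paper sets $\theta=t-r$, $\eta=x-z$ and invokes the comparison lemma \cite[Lemma 2.2]{chen2019comparison}, which resums the renewal series $H(\theta,\gamma)=\sum_n\gamma^n h_n(\theta)$ with $\gamma=2c_p\alpha\beta^2\lipc^2/\log\eps^{-1}$ and $h_1(\theta)\le\tfrac{1}{2\pi}\bigl(1+\log\tfrac{2\pi\|\phi\|_\infty T}{\eps^2}\bigr)$, leading to the smallness condition $2c_p\alpha\beta^2\lipc^2<\pi$ and hence $\beta_0(p)=\tfrac{\sqrt{2\pi}}{2\sqrt{c_p}\lipc}$. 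You instead normalize by the heat kernel and close a scalar renewal inequality for the sup-ratio $g^\eps$, using the Gaussian bridge identity to factor $G_{t-r}(x-z)^2$ out of the bilinear term exactly; your capped bridge kernel integrates to $\tfrac{1}{\pi}\log\eps^{-1}+O(1)$ over the horizon $T/\eps^2$, giving the condition $c_p\beta^2\lipc^2<\pi$, i.e. $\beta_0(p)=\tfrac{\sqrt{\pi}}{\sqrt{c_p}\lipc}$. Both proofs turn on the identical mechanism --- the $\Theta(\log\eps^{-1})$ time integral of the capped kernel cancels the $1/\log\eps^{-1}$ prefactor, which is exactly where the subcritical smallness of $\beta$ enters and where the single surviving factor $1/\sqrt{\log\eps^{-1}}$ comes from --- but your route is self-contained (no external comparison lemma) and handles the product of the two normalized derivatives without the factor-$2$ loss built into \eqref{eq:boundofK}, so your threshold is larger by a factor $\sqrt{2}$ than the paper's; this is noteworthy because Remark~\ref{re:threshold} identifies precisely that factor $\tfrac{1}{2}$ coming from \eqref{eq:boundofK} as the bottleneck that was ``unclear how to improve.'' The price of your approach is the a priori finiteness of $g^\eps$ needed for the absorption step, which you correctly flag: the sup-ratio could in principle be infinite because $(\mathbb{E}|D_{r,z}\bV^\eps(t,x)|^p)^{1/p}$ must decay at Gaussian rate in $|x-z|$ for the ratio to be bounded, and that is not known in advance. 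Your fix --- proving the bound for the Picard iterates, whose Malliavin derivatives satisfy the same linear recursion with $D_{r,z}\bV^\eps_{n-1}$ in the integrand, obtaining a constant uniform in $n$ (the recursion contracts under the same condition on $\beta$), and passing to the limit --- is the standard and sound way to do this, whereas the paper's citation of the comparison lemma packages that issue away.
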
 
\begin{proof}
We follow the idea from \cite[Lemma 2.2]{gu2020nonlinear}.

Let $\alpha>1$ be arbitrary. By \eqref{eq:malDofbigV}, the triangle inequality and the inequality $2ab \leq (ca)^2+(\frac{b}{c})^2$ for all $c>0$, for all $0\leq r<t\leq \frac{T}{\eps^2}$, $x,z \in \R^2$,
\begin{align}
    &\left(\mathbb{E}\left|D_{r,z}\bV^\eps(t,x)\right|^p\right)^{\frac{2}{p}} \nonumber\\
    &\leq \frac{\alpha}{\alpha-1}\frac{\beta^2}{\log \eps^{-1}}G_{t-r}(x-z)^2\left(\mathbb{E}|\sigma(\bV^\eps(r,z))|^p\right)^{\frac{2}{p}} \nonumber\\&+\frac{\alpha\beta^2}{\log\eps^{-1}}\left(\mathbb{E}\left|\int_r^t\int_{\R^2}G_{t-s}(x-y)\Sigma(s,y)D_{r,z}\bV^\eps(s,y)\dd W_\phi(s,y)\right|^p\right)^{\frac{2}{p}} \nonumber
\end{align}
Applying the Burkholder-Davis-Gundy inequality with $c_p=\frac{p(p-1)}{2}$, we have
\begin{align}    
&\left(\mathbb{E}\left|D_{r,z}\bV^\eps(t,x)\right|^p\right)^{\frac{2}{p}}
    \leq\frac{\alpha}{\alpha-1}\frac{\beta^2\lipc^2}{\log \eps^{-1}}G_{t-r}(x-z)^2\left(\mathbb{E}|\bV^\eps(r,z)|^p\right)^{\frac{2}{p}} \nonumber\\
    &+c_p \frac{\alpha\beta^2}{\log\eps^{-1}}  \int_r^t\bigg[\mathbb{E}\bigg(\int_{\R^2}\int_{\R^2}G_{t-s}(x-y_1)G_{t-s}(x-y_2)\Sigma(s,y_1)\Sigma(s,y_2)\nonumber\\&\cdot D_{r,z}\bV^\eps(s,y_1)D_{r,z}\bV^\eps(s,y_2)R(y_1-y_2)\dd y_1 \dd y_2 \bigg)^{\frac{p}{2}}\bigg]^{\frac{2}{p}}\dd s\nonumber
\end{align}
Using the Minkowski inequality, \eqref{eq:bigVmobd} and the boundness of $\Sigma$, we further obtain
\begin{align}
&\left(\mathbb{E}\left|D_{r,z}\bV^\eps(t,x)\right|^p\right)^{\frac{2}{p}} \nonumber\\
    &\leq  \frac{\alpha}{\alpha-1}\frac{\beta^2\lipc^2C^2}{\log \eps^{-1}}G_{t-r}(x-z)^2 +c_p \frac{\alpha\beta^2\lipc^2}{\log\eps^{-1}}  \int_r^t\int_{\R^4}R(y_1-y_2)\nonumber\\
    &\cdot G_{t-s}(x-y_1)G_{t-s}(x-y_2) \left[\mathbb{E}|D_{r,z}\bV^\eps(s,y_1)|^p\right]^{\frac{1}{p}}\left[\mathbb{E}|D_{r,z}\bV^\eps(s,y_2)|^p\right]^{\frac{1}{p}}\dd y_1 \dd y_2 \dd s.
    \nonumber 
\end{align}
If we set $t=\theta+r$ and $x=\eta+z$, let \begin{equation*}
    \mathcal{K}(\theta, \eta) := \left(\mathbb{E}\left|D_{r,z}\bV^\eps(\theta+r,\eta+z)\right|^p\right)^{\frac{1}{p}} =\left(\mathbb{E}\left|D_{r,z}\bV^\eps(t,x)\right|^p\right)^{\frac{1}{p}},
    \end{equation*}
then we can rewrite the above inequality as
\begin{align}
\label{eq:bdgMalliVcov}
\mathcal{K}(\theta,\eta)^2 &\leq \frac{\alpha}{\alpha-1}\frac{\beta^2\lipc^2C^2}{\log \eps^{-1}}G_{\theta}(\eta)^2+ c_p\frac{\alpha\beta^2\lipc^2}{\log\eps^{-1}}  \int_0^{\theta}\int_{\R^4}R(y_1-y_2)\nonumber\\
&\cdot G_{\theta-s}(\eta-y_1)G_{\theta-s}(\eta-y_2)\mathcal{K}(s,y_1)\mathcal{K}(s,y_2) \dd y_1 \dd y_2 \dd s. \nonumber
\end{align}
Applying \cite[Lemma 2.2]{chen2019comparison}, we have
\begin{equation}
    \mathcal{K}(\theta,\eta)\leq \sqrt{\frac{\alpha}{\alpha-1}}\frac{\beta\lipc C}{\sqrt{\log \eps^{-1}}}G_{\theta}(\eta)\cdot H\left(\theta, 2c_p\frac{\alpha\beta^2\lipc^2}{\log\eps^{-1}} \right)^{\frac{1}{2}} \label{eq:boundofK}
\end{equation}
where $H(t,\gamma)$ is defined for all $\gamma\geq0$ as
\begin{equation*}
    H(t,\gamma)= \sum_{n=0}^{\infty} \gamma^nh_n(t).
\end{equation*}
Here $h_0(t) := 1$ and for $n\geq 1$,
\begin{equation*}
    h_n(t) := \int_0^th_{n-1}(s)\int_{\R^2}G_{t-s}(z)R(z)\dd z\dd s.
\end{equation*}
It is easy to see that with $R(\cdot) \leq \|\phi\|_{\infty}$ and $\int_{\R^2}R(y)dy=1$, we have the elementary inequality 
\begin{equation*}
    \int_{\R^2} G_{t-s}(z)R(z)\dd z \leq \frac{1}{2\pi(t-s)}\wedge \|\phi\|_{\infty} \quad \text{for all } 0\leq s \leq t.
\end{equation*}
Since $0\leq \theta \leq t \leq \frac{T}{\eps^2}$, 
\begin{align}
    h_1(\theta)&=\int_0^{\theta}\int_{\R^2} G_{\theta-s}(z)R(z)\dd z \dd s \leq \int_0^{\theta}\left( \frac{1}{2\pi(\theta-s)}\wedge \|\phi\|_{\infty}\right) \dd s \nonumber\\
    & = \int_0^{\theta} \left(\frac{1}{2\pi s} \wedge \|\phi\|_{\infty}\right)\dd s \leq \int_0^{\frac{T}{\eps^2}} \left(\frac{1}{2\pi s} \wedge \|\phi\|_{\infty}\right)\dd s  \nonumber \\
    &= \int_0^T\left(\frac{1}{2\pi s} \wedge \frac{\|\phi\|_{\infty}}{\eps^2}\right)\dd s = \int_0^{\frac{\eps^2}{2 \pi \|\phi\|_{\infty}}} \frac{\|\phi\|_{\infty}}{\eps^2} \dd s + \int_{\frac{\eps^2}{2 \pi \|\phi\|_{\infty}}}^T \frac{1}{2\pi s}\dd s\nonumber\\
    &=\frac{1}{2\pi}+ \frac{1}{2\pi}\log \frac{2 \pi \|\phi\|_{\infty}T}{\eps^2}. \nonumber
\end{align}
Thus for all $n\geq 1$,
\begin{equation}
    h_n(\theta) \leq \left[\frac{1}{2\pi}+ \frac{1}{2\pi}\log \frac{2 \pi \|\phi\|_{\infty}T}{\eps^2}\right]^n.\nonumber
\end{equation}
Notice that we have
\begin{equation}
    \lim_{\eps \to 0}\left(2c_p\frac{\alpha\beta^2\lipc^2}{\log\eps^{-1}}\right)\cdot\left(\frac{1}{2\pi}+ \frac{1}{2\pi}\log \frac{2 \pi \|\phi\|_{\infty}T}{\eps^2}\right) = \frac{2c_p \alpha \beta^2 \lipc^2}{\pi}.\label{eq:limitforsum}
\end{equation}
The limit is independent of $\phi$ and $T$.
Therefore, if for some $\alpha >1$,
\begin{equation}
    2c_p\alpha \beta^2 \lipc^2 < \pi, \label{eq:conditionforbeta}
\end{equation}
 then there exists an $\eps_0'>0$, such that when $\eps \leq \eps_0'$,
\begin{equation*}
    H\left(\theta, 2c_p\frac{\alpha\beta^2\lipc^2}{\log\eps^{-1}} \right) \leq C'_0
\end{equation*}
for some $C'_0 = C'_0(\alpha, c_p, \beta \lipc)>0$. 
By \eqref{eq:boundofK}, there exists a uniform $C' = C'(\alpha, c_p, \beta \lipc)>0$ such that
\begin{equation}
    \mathcal{K}(\theta,\eta)\leq \frac{ C'}{\sqrt{\log \eps^{-1}}}G_{\theta}(\eta).\label{eq:realboundofK} \nonumber
\end{equation}

We may choose $\alpha>1$ according to $p$ and $\beta\lipc$. Condition \eqref{eq:conditionforbeta} is valid if and only if 
\begin{equation*}
    \beta < \beta_0(p):= \frac{\sqrt{2\pi}}{2\sqrt{c_p}\lipc}.
\end{equation*}
In particular, $\beta_0(2) =\frac{1}{2} \frac{\sqrt{2\pi}}{\lipc}$ and $\beta_0(4) = \frac{1}{2\sqrt{6}}\frac{\sqrt{2\pi}}{\lipc}$.
\end{proof}

\begin{remark}\label{re:threshold}
In Section~\ref{se:mainproof} below, we only need the results of Lemma~\ref{le:Vmomentbound} and Lemma~\ref{le:MallivianMomentbound} for $p = 2, 4$. As a result, we can let $\beta_0  = \frac{1}{2\sqrt{6}}\frac{\sqrt{2\pi}}{\lipc}$ in Theorem~\ref{thm:mainthm}. Compared with the linear SHE case, we don't believe that it is an optimal value. The main difficulty of extension came from the restriction of $\beta$ in Lemma~\ref{le:MallivianMomentbound}. The coefficient $\frac{1}{\sqrt{6}}$ came from the BDG-inequality and $\frac{1}{2}$ came from \eqref{eq:boundofK}. One may find elementary ways to improve the BDG-inequality here, but while we believe that \eqref{eq:boundofK} is not sharp, it remains unclear how we can improve it.
\end{remark}

\subsection{Stein's method}
The following propositions are key ingredients in our proof of the central limit theorems Theorem~\ref{thm:mainthm} and Theorem~\ref{thm:functionalCLT}. They are derived from the Stein's method for normal approximations and are also used in \cite{gu2020nonlinear} and \cite{huang2020central}. A more detailed discussion on the Stein's method with Malliavin calculus should be referred to the monograph \cite{nourdin2012normal}.
\begin{proposition}
Let $X$ be a random variable such that $X=\delta(v)$ for $v \in \mathrm{Dom}\delta$. Assume $X \in \malD^{1,2}$. Let $Z$ be a centered Gaussian random variable with variance $\Sigma$. For any $C^2$-function $h:\R \to \R$ with a bounded second order derivative, 
\begin{equation}
    |\mathbb{E}h(X)-\mathbb{E}h(Z)|\leq \frac{1}{2}\|h''\|_{\infty}\sqrt{\mathbb{E}\left|\Sigma - \langle DX, v\rangle_{\hilb}\right|^2}. \nonumber
\end{equation}
\label{pr:SteinsMethod}
\end{proposition}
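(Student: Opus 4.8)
The plan is to run Stein's method for the centered Gaussian target $Z\sim N(0,\Sigma)$ and then to convert the resulting expectation into a Malliavin integration by parts. First I would introduce the one-dimensional Stein equation associated to $Z$,
\begin{equation}
    \Sigma f'(x) - x f(x) = h(x) - \mathbb{E}h(Z),\nonumber
\end{equation}
whose bounded solution $f=f_h$ is given explicitly by
\begin{equation}
    f(x) = \frac{1}{\Sigma}\,\mathrm{e}^{x^2/(2\Sigma)}\int_{-\infty}^{x}\big(h(y)-\mathbb{E}h(Z)\big)\,\mathrm{e}^{-y^2/(2\Sigma)}\,\dd y.\nonumber
\end{equation}
Because $h\in C^2$ with bounded second derivative, the classical Stein-factor estimate (see \cite{nourdin2012normal}) gives $\|f'\|_\infty\le\tfrac12\|h''\|_\infty$; this bound is scale-invariant in $\Sigma$ and is already sharp on quadratics. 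Evaluating the Stein equation at $X$ and taking expectations reduces the quantity of interest to
\begin{equation}
    \mathbb{E}h(X) - \mathbb{E}h(Z) = \mathbb{E}\big[\Sigma f'(X) - X f(X)\big].\nonumber
\end{equation}

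Next I would rewrite the term $\mathbb{E}[Xf(X)]$ using the hypothesis $X=\delta(v)$. Since $X\in\malD^{1,2}$ and $f$ is $C^1$ with bounded derivative, $f(X)\in\malD^{1,2}$ and the chain rule gives $D f(X)=f'(X)\,DX$. Invoking the duality between $\delta$ and $D$ then yields
\begin{equation}
    \mathbb{E}[Xf(X)] = \mathbb{E}[\delta(v)f(X)] = \mathbb{E}\big[\langle v, D f(X)\rangle_{\hilb}\big] = \mathbb{E}\big[f'(X)\,\langle DX, v\rangle_{\hilb}\big].\nonumber
\end{equation}
Substituting this back, the two terms collapse into a single expectation,
\begin{equation}
    \mathbb{E}h(X) - \mathbb{E}h(Z) = \mathbb{E}\big[f'(X)\big(\Sigma - \langle DX, v\rangle_{\hilb}\big)\big].\nonumber
\end{equation}
Finally I would pull out $\|f'\|_\infty$ and apply Cauchy--Schwarz (or Jensen) to pass from the $L^1$ to the $L^2$ norm of the defect $\Sigma-\langle DX,v\rangle_{\hilb}$, obtaining
\begin{equation}
    |\mathbb{E}h(X) - \mathbb{E}h(Z)| \le \|f'\|_\infty\,\mathbb{E}\big|\Sigma - \langle DX,v\rangle_{\hilb}\big| \le \tfrac12\|h''\|_\infty\,\sqrt{\mathbb{E}\big|\Sigma - \langle DX,v\rangle_{\hilb}\big|^2},\nonumber
\end{equation}
which is the claim.

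The explicit ODE solution and the closing H\"older step are routine. The step that deserves care, and which I expect to be the main obstacle, is the justification of the integration-by-parts identity: one must verify that $f(X)\in\malD^{1,2}$ with $Df(X)=f'(X)\,DX$, and that the pairing $\mathbb{E}[\delta(v)f(X)]=\mathbb{E}[\langle v,Df(X)\rangle_{\hilb}]$ is legitimate. These follow from $X\in\malD^{1,2}$, the boundedness of $f'$ (itself a consequence of the Stein-factor bound), and the adjoint relationship $v\in\mathrm{Dom}\,\delta$, and this is precisely where both hypotheses $X=\delta(v)$ and $X\in\malD^{1,2}$ are genuinely used. The only other nontrivial input is the estimate $\|f'\|_\infty\le\tfrac12\|h''\|_\infty$, a standard bound on the solution of the one-dimensional Stein equation that I would quote rather than reprove.
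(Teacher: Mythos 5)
Your proof is correct, and it takes a genuinely different route from the paper's. The paper never argues the scalar case at all: it declares Proposition \ref{pr:SteinsMethod} to be the $m=1$ instance of the multivariate Proposition \ref{pr:SteinsMethodmulti}, which it in turn quotes from \cite{huang2020central}; the standard proof behind that citation is a Gaussian interpolation (smart-path) argument --- differentiate $t\mapsto\mathbb{E}\,h(\sqrt{t}\,F+\sqrt{1-t}\,Z)$, apply Malliavin duality to the $F$-part and Gaussian integration by parts to the $Z$-part --- with no Stein equation appearing anywhere. You instead solve the one-dimensional Stein equation and combine the Stein-factor bound $\|f'\|_\infty\le\tfrac12\|h''\|_\infty$ with the duality $\mathbb{E}[Xf(X)]=\mathbb{E}\big[f'(X)\langle DX,v\rangle_{\hilb}\big]$ and Cauchy--Schwarz. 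Every step checks out: the factor bound you quote is the classical Barbour--G\"otze generator estimate, provable from the Ornstein--Uhlenbeck semigroup representation $f'(x)=-\int_0^\infty e^{-2t}P_t h''(x)\,\dd t$ for $\Sigma=1$ and the rescaling you indicate, and it is indeed sharp on quadratics (for $h(x)=x^2$ one gets $f(x)=-x$); the chain rule $Df(X)=f'(X)\,DX$ is legitimate because $f\in C^1$ with bounded derivative and $X\in\malD^{1,2}$; and the duality pairing uses exactly $v\in\mathrm{Dom}\,\delta$, so you have correctly identified where each hypothesis enters. Two points of comparison. First, your argument needs $\Sigma>0$ to write down the Stein solution; the degenerate case $\Sigma=0$ follows by applying your inequality with variance $\sigma^2\downarrow 0$, and is anyway irrelevant here since $\Sigma_g^T>0$. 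Second, and more substantively, your method is intrinsically one-dimensional: the multivariate analogue of the Stein equation involves Hessians and yields less clean constants, whereas the interpolation argument underlying the paper's citation delivers Proposition \ref{pr:SteinsMethodmulti} for all $m$ with the same constant $\tfrac12$, and that multivariate statement is genuinely needed later for the finite-dimensional convergence in Lemma \ref{le:finiteconv}. So what your route buys is a short, self-contained, citation-free proof of the scalar case with the sharp constant; what the paper's route buys is uniformity in the dimension, which its functional CLT actually uses.
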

\begin{proof}
This is a special case of the Proposition \ref{pr:SteinsMethodmulti} below with $m=1$.
\end{proof}
\begin{remark}
The class of $C^2$ functions with bounded second order derivatives is a sufficient class of test functions $h(\cdot)$ to show convergence in law. With Proposition \ref{pr:SteinsMethod}, if we can prove that $\mathbb{E}\left|\Sigma - \langle DX_n, v_n\rangle_{\hilb}\right|^2 \to 0$ as $n \to \infty$ for a series of random variables $X_n \in \malD^{1,2}$ with $X_n = \delta(v_n)$, then we have $X_n \Rightarrow Z$ in law.
\end{remark}

To prove Theorem~\ref{thm:functionalCLT}, we need the following multivariate version.

\begin{proposition}
Let $F=\left(F^{(1)}, \dots, F^{(m)}\right)$ be a random vector such that $F^{(i)}=\delta(v_i)$ for $v_i \in \mathrm{Dom}\delta$, $i=1,\dots,m$. Assume $F^{(i)} \in \malD^{1,2}$ for $i=1,\dots,m$. Let $Z$ be an m-dimensional centered Gaussian vector with covariance matrix $(C_{i,j})_{1\leq i,j\leq m}$. For any $C^2$-function $h:\R ^m \to \R$ with bounded second order derivatives, 
\begin{equation}
    |\mathbb{E}h(F)-\mathbb{E}h(Z)|\leq \frac{1}{2}\|h''\|_{\infty}\sqrt{\sum_{i,j=1}^m\mathbb{E}\left|C_{i,j} - \langle DF^{(i)}, v^{(j)}\rangle_{\hilb}\right|^2}, \nonumber
\end{equation}
with
\begin{equation}
\|h''\|_{\infty} = \max_{1\leq i,j\leq m} \sup_{x\in \R^m}\left|\frac{\partial^2 h}{\partial x_i \partial x_j}(x)\right|.
    \nonumber
\end{equation}
\begin{proof}
See \cite[Proposition 2.3]{huang2020central}.
\end{proof}
\label{pr:SteinsMethodmulti}
\end{proposition}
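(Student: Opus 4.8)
The plan is to combine the multivariate Stein's method for the target law $N_m(0,C)$ with the Malliavin integration-by-parts formula, i.e. the duality between $\delta$ and $D$. The starting point is the Stein equation for the Gaussian vector $Z$: given the test function $h$, one seeks $f=f_h:\R^m\to\R$ solving
\[
\sum_{i,j=1}^m C_{i,j}\,\partial_{ij}f(x)-\sum_{i=1}^m x_i\,\partial_i f(x)=h(x)-\mathbb{E}[h(Z)].
\]
A solution is given explicitly through the Ornstein--Uhlenbeck (Mehler) semigroup,
\[
f(x)=\int_0^\infty\Big(\mathbb{E}[h(Z)]-\mathbb{E}\big[h(e^{-s}x+\sqrt{1-e^{-2s}}\,Z)\big]\Big)\,\dd s,
\]
and differentiating twice under the integral sign yields the a priori regularity estimate
\[
\sup_{x}\big|\partial_{ij}f(x)\big|\le\tfrac12\sup_x\big|\partial_{ij}h(x)\big|\le\tfrac12\|h''\|_\infty ,
\]
where the factor $\tfrac12$ comes from $\int_0^\infty e^{-2s}\,\dd s=\tfrac12$. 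Establishing this estimate (and, more precisely, the Hilbert--Schmidt control of the Hessian of $f$ needed in the last step), together with justifying the differentiation under the integral, is where the genuine analytic work sits; it also uses positive definiteness of $C$ so that the equation is solvable.

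With $f$ in hand, I would substitute $x=F$ and take expectations to obtain
\[
\mathbb{E}[h(F)]-\mathbb{E}[h(Z)]=\sum_{i,j=1}^m C_{i,j}\,\mathbb{E}[\partial_{ij}f(F)]-\sum_{i=1}^m\mathbb{E}\big[F^{(i)}\,\partial_i f(F)\big].
\]
The key manipulation is to rewrite the last sum using $F^{(i)}=\delta(v_i)$ and the duality $\mathbb{E}[\delta(v_i)G]=\mathbb{E}[\langle DG,v_i\rangle_{\hilb}]$ with $G=\partial_i f(F)$. The chain rule for the Malliavin derivative gives $D(\partial_i f(F))=\sum_{j=1}^m\partial_{ij}f(F)\,DF^{(j)}$, hence
\[
\mathbb{E}\big[F^{(i)}\partial_i f(F)\big]=\sum_{j=1}^m\mathbb{E}\big[\partial_{ij}f(F)\,\langle DF^{(j)},v_i\rangle_{\hilb}\big].
\]
Subtracting, the two double sums combine into
\[
\mathbb{E}[h(F)]-\mathbb{E}[h(Z)]=\sum_{i,j=1}^m\mathbb{E}\Big[\partial_{ij}f(F)\big(C_{i,j}-\langle DF^{(j)},v_i\rangle_{\hilb}\big)\Big],
\]
which matches the statement after relabeling indices and using the symmetry $C_{i,j}=C_{j,i}$. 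This step requires $\partial_i f(F)\in\malD^{1,2}$, which follows from the boundedness of the derivatives of $f$ and the hypothesis $F^{(i)}\in\malD^{1,2}$.

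Finally, viewing $\big(\partial_{ij}f(F)\big)_{i,j}$ and $\big(C_{i,j}-\langle DF^{(j)},v_i\rangle_{\hilb}\big)_{i,j}$ as $m\times m$ matrices and applying the Cauchy--Schwarz inequality for the Hilbert--Schmidt inner product, together with the Hessian bound from the first step, produces
\[
\big|\mathbb{E}[h(F)]-\mathbb{E}[h(Z)]\big|\le\tfrac12\|h''\|_\infty\sqrt{\sum_{i,j=1}^m\mathbb{E}\big|C_{i,j}-\langle DF^{(i)},v^{(j)}\rangle_{\hilb}\big|^2}.
\]
The \emph{main obstacle} is the regularity estimate for the Stein solution: controlling the second derivatives of $f$ uniformly by those of $h$ in exactly the norm for which this last Cauchy--Schwarz produces the clean $\sqrt{\sum_{i,j}}$ structure, rather than a crude bound carrying an extra factor of $m$, is the delicate point and is precisely the content of the cited multivariate Stein estimate.
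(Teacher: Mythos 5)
Your route is genuinely different from the one behind the paper's proof, which is not a proof at all but a citation: the argument in \cite{huang2020central} is the Gaussian interpolation (``smart path'') method. There one sets $\psi(t)=\mathbb{E}\,h\big(\sqrt{1-t}\,F+\sqrt{t}\,Z\big)$, computes
\begin{equation}
\psi'(t)=\tfrac12\sum_{i,j=1}^m\mathbb{E}\Big[\partial_{ij}h\big(\sqrt{1-t}\,F+\sqrt{t}\,Z\big)\big(C_{i,j}-\langle DF^{(j)},v_i\rangle_{\hilb}\big)\Big]\nonumber
\end{equation}
by Gaussian integration by parts on the $Z$-part and the duality $\mathbb{E}[\delta(v_i)G]=\mathbb{E}\langle DG,v_i\rangle_{\hilb}$ on the $F$-part, and then integrates over $t\in[0,1]$. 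You instead solve the multivariate Stein equation through the Mehler semigroup and apply the same duality/chain-rule computation to $\partial_i f(F)$. Both are standard, and your algebra (the duality step, the chain rule, the combination of the two double sums) is correct. What the interpolation argument buys is that there is no Stein solution to estimate: the only regularity ever used is that of $h$ itself, there is no differentiation under the integral sign to justify, and no positive definiteness of $C$ is needed --- precisely the items you identify as ``where the genuine analytic work sits.''

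The real issue is the point you deferred at the end. Obtaining the constant $\tfrac12$ without a factor of $m$ is not a delicate estimate hiding in the cited reference; with $\|h''\|_{\infty}$ defined entrywise as in the statement, it is impossible, because the inequality as displayed is false for $m\ge 2$. Take $\hilb=\R^m$ with $v_i=e_i$ the standard basis, so $F^{(i)}=\delta(e_i)$ are i.i.d.\ standard Gaussians, $DF^{(i)}=e_i$, and $\langle DF^{(i)},v_j\rangle_{\hilb}=\delta_{ij}$; let $C=I+\eps\,\mathbf{1}\mathbf{1}^{T}$ and $h(x)=\tfrac12\big(\sum_i x_i\big)^2$, so that $\|h''\|_\infty=1$. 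Then $|\mathbb{E}h(F)-\mathbb{E}h(Z)|=\tfrac12\eps m^2$, while the stated right-hand side is $\tfrac12\eps m$. The correct form of the proposition carries $\tfrac m2$ in place of $\tfrac12$ (the example shows this is sharp), and that is exactly what both proofs honestly deliver: in your scheme, $\big(\sum_{i,j}|\partial_{ij}f|^2\big)^{1/2}\le\tfrac m2\|h''\|_\infty$ is the best available Hilbert--Schmidt bound, and in the interpolation scheme the factor $m$ enters when $\sum_{i,j}\mathbb{E}|\cdot|$ is compared to $\big(\sum_{i,j}\mathbb{E}|\cdot|^2\big)^{1/2}$. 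In other words, your proof as written establishes the true $\tfrac m2$ statement (which is also the statement of \cite[Proposition 2.3]{huang2020central}); the version reproduced in this paper has simply dropped the factor $m$. This is harmless for the paper's application, where $m$ is fixed and the right-hand side tends to $0$, but you should state the bound with $\tfrac m2$ rather than leave the clean $\tfrac12$ as an unproved --- and unprovable --- estimate.
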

\subsection{Local statistics of the limit}
We need the following local statistics of the limiting random field $u^{\eps}(t,x)$ as $\eps \to 0$. 
\begin{proposition}
If $\beta \lipc < \sqrt{2\pi}$, for any $t>0$ and $x \in \R^2$, we have
\begin{equation}
    u^{\eps}(t,x) \stackrel{\text{law}}{\to} \Xi_{1,2}(2) \qquad \text{as }\eps \to 0. \nonumber
\end{equation}
The random process $\Xi_{1,2}(\cdot)$ is defined as in \eqref{eq:fbsde}--\eqref{eq:fbsdeinitialcon}.
\label{pr:limitObject}
\end{proposition}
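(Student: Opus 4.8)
The plan is to identify this limit exactly as in \cite[Theorem~1.2]{dunlap2020forward}, through a martingale characterization that reproduces the FBSDE \eqref{eq:fbsde}--\eqref{eq:fbsdeinitialcon}. First I would apply the scaling identity to replace $u^\eps(t,x)$ in law by $\bV^\eps(t/\eps^2,x/\eps)$, so that the pointwise limit becomes a statement about the long-time behaviour of the microscopic solution \eqref{eq:bigVmild} observed at a single space-time point. The uniform moment bounds of Lemma~\ref{le:Vmomentbound}, available here because $\beta\lipc<\sqrt{2\pi}$, give tightness of the family $\{\bV^\eps(t/\eps^2,x/\eps)\}_\eps$, so it suffices to identify every subsequential limit.

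To identify the limit I would work with the $L^2$-martingale $M^\eps(s):=\mathbb{E}[\bV^\eps(t/\eps^2,x/\eps)\mid\filt_s]$, which satisfies $M^\eps(0)=1$ and has terminal value $\bV^\eps(t/\eps^2,x/\eps)$. Combining the Clark--Ocone formula (Proposition~\ref{pr:clark}) with the explicit Malliavin derivative \eqref{eq:malDofbigV}, and using that the Itô--Walsh integral term in \eqref{eq:malDofbigV} has vanishing conditional expectation given $\filt_r$ (its integrand is adapted on $[r,t]$), one obtains $\dd M^\eps(s)=\frac{\beta}{\sqrt{\log\eps^{-1}}}\int_{\R^2}G_{t/\eps^2-s}(x/\eps-z)\sigma(\bV^\eps(s,z))\,\dd W_\phi(s,z)$. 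Its quadratic variation density then involves $\sigma^2$ of the solution, averaged against $G^2$ and the spatial correlation $R$.

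Next I would perform the logarithmic time change $s\mapsto q$ mapping the microscopic window to a proper time $q\in[0,2]$. The factor $2$ is the signature of $d=2$: integrals $\int^{t/\eps^2}\frac{\dd r}{t/\eps^2-r}$, cut off at the correlation scale exactly as in Lemma~\ref{lemma:2mobdlemma}, grow like $2\log\eps^{-1}$, so dividing by $\log\eps^{-1}$ leaves proper time $2$. Using $\int_{\R^2}G_\theta(\eta)^2\,\dd\eta=\tfrac{1}{4\pi\theta}$ and $\int_{\R^2}R=1$ (the latter localizing the double spatial integral as $\eps\downarrow0$), the bracket density converges to $\frac{\beta^2}{4\pi}$ times a variance factor, and $\frac{\beta^2}{4\pi}=\big(\frac{\beta}{2\sqrt\pi}\big)^2$ matches the diffusion coefficient in \eqref{eq:fbsde}. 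Any limit point of the time-changed $M^\eps$ is then a continuous martingale started at $1$ whose bracket is $\frac{\beta^2}{4\pi}$ times the integrated variance factor, which one recognizes as a solution of the FBSDE \eqref{eq:fbsde}--\eqref{eq:fbsdeinitialcon}; its value at $q=2$ is $\Xi_{1,2}(2)$. Uniqueness of that FBSDE, proved in \cite[Theorem~1.1]{dunlap2020forward} precisely in the subcritical range, forces all subsequential limits to agree and yields convergence in law.

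The main obstacle is the self-averaging (homogenization) step needed to close the FBSDE: the random density $\sigma^2(\bV^\eps(s,z))$ in the bracket must be replaced, in the $\eps\downarrow0$ limit and after averaging against $G^2$, by the conditional expectation $\mathbb{E}[\sigma^2(\text{terminal value})\mid\filt_s]$. This backward dependence on the terminal value is exactly what makes the limiting equation forward--backward rather than an ordinary SDE, and justifying it requires spatial decorrelation estimates on $\bV^\eps$ together with careful control of the error terms dropped from \eqref{eq:malDofbigV}; the hypothesis $\beta\lipc<\sqrt{2\pi}$ is what renders those errors summable. I expect this replacement, rather than the tightness or the time-change bookkeeping, to be the crux.
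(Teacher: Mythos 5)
The first thing to note is that the paper does not prove this proposition at all: its entire proof is the citation ``See \cite[Theorem 1.2]{dunlap2020forward}.'' Proposition~\ref{pr:limitObject} is an external input to the paper, and Theorems~\ref{thm:mainthm} and~\ref{thm:functionalCLT} are built on top of it. So your proposal is attempting strictly more than the paper does, and the relevant comparison is with the proof in \cite{dunlap2020forward} itself. Your outline does reconstruct the architecture of that proof: pass to microscopic variables, represent $M^\eps(s)=\mathbb{E}[\bV^\eps(t/\eps^2,x/\eps)\mid\filt_s]$ as an It\^o--Walsh martingale, perform the logarithmic time change sending $[0,t/\eps^2]$ to proper time $q\in[0,2]$, identify the limiting bracket, and conclude by uniqueness in law for the FBSDE \eqref{eq:fbsde}--\eqref{eq:fbsdeinitialcon}. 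The constant bookkeeping ($\tfrac{\beta^2}{4\pi}$ per unit $q$, terminal time $2$) is also right. One small simplification: you do not need the Clark--Ocone formula or the Malliavin derivative \eqref{eq:malDofbigV} to get $\dd M^\eps$; since the integrand in the mild formulation \eqref{eq:bigVmild} is adapted, conditioning on $\filt_s$ simply truncates the stochastic integral at time $s$, which yields your expression for $\dd M^\eps(s)$ directly.

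As a proof, however, your proposal has a genuine gap, and you name it yourself: the self-averaging step in which the random bracket density involving $\sigma^2(\bV^\eps(s,\cdot))$, averaged against the heat kernels and $R$, is replaced in the limit by the conditional expectation of $\sigma^2$ of the terminal value given $\filt_s$. That replacement is not a technical detail to be ``expected''; it is the entire mathematical content of \cite[Theorem~1.2]{dunlap2020forward}. It is what makes the limiting object forward--backward rather than a Markovian SDE, and it is where the decorrelation estimates, the multi-scale analysis, and the hypothesis $\beta\lipc<\sqrt{2\pi}$ actually do work. Tightness (which, as you say, follows from the $p=2$ case of Lemma~\ref{le:Vmomentbound}, valid in the full subcritical regime) and the time-change bookkeeping are the easy parts; they identify what the limit should be but do not close the argument. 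Without at least a sketch of how the law of $\bV^\eps$ at intermediate times is controlled so that the bracket closes on the terminal datum, what you have is a correct plan of attack rather than a proof. Within the logic of this paper, the honest move is the one the paper makes: quote the result of \cite{dunlap2020forward}.
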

\begin{proof}
See \cite[Theorem 1.2]{dunlap2020forward}.
\end{proof}

We also need the following spatial regularity statement for $u^{\eps}(t,\cdot)$.
\begin{proposition}
For fixed $T>0$, we have
\begin{equation}
    \lim_{\eps \to 0} \sup_{\substack{t \in [0,T]\\ x_1, x_2 \in \R^2}} \frac{\left(\mathbb{E}(u^{\eps}(t,x_1)- u^{\eps}(t,x_2))^2\right)^{1/2}}{1+\eps^{-1}|x_1-x_2|}=0.\nonumber
\end{equation}\label{pr:limitreg}
\end{proposition}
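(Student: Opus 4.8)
The plan is to start from the mild formulation \eqref{eq:umild} and exploit that the two fields $u^\eps(t,x_1)$ and $u^\eps(t,x_2)$ are driven by the \emph{same} noise and the \emph{same} reaction term $\sigma(u^\eps(s,y))$. Writing $h=x_1-x_2$ and $\Delta_h G_\tau(y):=G_\tau(x_1-y)-G_\tau(x_2-y)$, one has
\begin{equation*}
u^\eps(t,x_1)-u^\eps(t,x_2)=\frac{\beta}{\sqrt{\log\eps^{-1}}}\int_0^t\int_{\R^2}\Delta_h G_{t-s}(y)\,\sigma(u^\eps(s,y))\,\dd W_{\phi^\eps}(s,y),
\end{equation*}
so the Itô--Walsh isometry gives
\begin{equation*}
\mathbb{E}(u^\eps(t,x_1)-u^\eps(t,x_2))^2=\frac{\beta^2}{\log\eps^{-1}}\int_0^t\iint \Delta_h G_{t-s}(y_1)\Delta_h G_{t-s}(y_2)\,\mathbb{E}[\sigma(u^\eps(s,y_1))\sigma(u^\eps(s,y_2))]\,\tfrac{1}{\eps^2}R(\tfrac{y_1-y_2}{\eps})\,\dd y_1\dd y_2\dd s.
\end{equation*}
By Lemma~\ref{le:Vmomentbound} with $p=2$ (whose threshold $\beta_0(2)=\frac{\sqrt{2\pi}}{\lipc}$ comfortably contains our $\beta<\beta_0$), together with $\sigma(0)=0$, the Lipschitz bound, and Cauchy--Schwarz, the correlation is bounded by $\lipc^2 C^2$ uniformly. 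Since the resulting upper bound will depend on $x_1,x_2$ only through $h$, the supremum over $x_1,x_2\in\R^2$ reduces to a supremum over $h\in\R^2$, and it suffices to control the deterministic quantity $\int_0^t\iint|\Delta_h G_{t-s}(y_1)||\Delta_h G_{t-s}(y_2)|\,\frac{1}{\eps^2}R(\frac{y_1-y_2}{\eps})\,\dd y_1\dd y_2\dd s$.

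Next I would use the factorization $\frac{1}{\eps^2}R(\frac{y_1-y_2}{\eps})=\int_{\R^2}\phi^\eps(y_1-z)\phi^\eps(y_2-z)\,\dd z$, which turns the inner double integral into the square $A(t-s):=\|\,|\Delta_h G_{t-s}|*\check\phi^\eps\,\|_{L^2(\R^2)}^2$, where $\check\phi^\eps(w)=\phi^\eps(-w)$. Young's convolution inequality, applied in its two endpoint forms ($L^2*L^1$ and $L^1*L^2$), then yields
\begin{equation*}
A(\tau)\leq\min\Big(\|\Delta_h G_\tau\|_{L^2}^2,\ \tfrac{\|\phi\|_{L^2}^2}{\eps^2}\|\Delta_h G_\tau\|_{L^1}^2\Big).
\end{equation*}
The standard heat-kernel increment estimates give $\|\Delta_h G_\tau\|_{L^2}^2=\frac{1}{2\pi\tau}(1-e^{-|h|^2/4\tau})\leq\frac{1}{2\pi\tau}\min(1,\frac{|h|^2}{4\tau})$ and $\|\Delta_h G_\tau\|_{L^1}\leq\min(2,\frac{c|h|}{\sqrt\tau})$ for a constant $c$.

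The heart of the argument is integrating $A(\tau)$ over $\tau\in[0,t]$. The unmollified $L^2$-bound $\|\Delta_h G_\tau\|_{L^2}^2$ is non-integrable at $\tau=0$ (the usual small-time ultraviolet divergence that makes the two-dimensional SHE singular), so I would use the mollified $L^1$-bound $\frac{\|\phi\|_{L^2}^2}{\eps^2}\|\Delta_h G_\tau\|_{L^1}^2$ on the short-time range $\tau\lesssim\eps^2$, where it contributes only $O(1)$, and the $L^2$-bound on the complementary range. Splitting further at $\tau\sim|h|^2$ and performing elementary one-dimensional integrals, this produces
\begin{equation*}
\int_0^t A(\tau)\,\dd\tau\leq C\big(\log_+(|h|/\eps)+1\big),
\end{equation*}
with $C$ depending only on $\phi$. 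I expect this interplay — choosing the correct Young bound on each time scale, so that the mollification cures the small-$\tau$ divergence while still matching the large-$\tau$ behavior — to be the main technical obstacle; it is also the place where the regularity of $\phi$ enters.

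Finally, combining the two estimates,
\begin{equation*}
\frac{\mathbb{E}(u^\eps(t,x_1)-u^\eps(t,x_2))^2}{(1+\eps^{-1}|h|)^2}\leq \frac{\beta^2\lipc^2C^2}{\log\eps^{-1}}\cdot\frac{C(\log_+(|h|/\eps)+1)}{(1+\eps^{-1}|h|)^2}.
\end{equation*}
Setting $u=\eps^{-1}|h|$, the function $\frac{\log_+ u+1}{(1+u)^2}$ is bounded by an absolute constant on $u\geq0$ (it is $O(1)$ near $u=0$ and $O(\log u/u^2)\to0$ as $u\to\infty$), uniformly in $\eps$. Taking the square root and the supremum over $t\in[0,T]$ and $x_1,x_2\in\R^2$ therefore leaves a bound of order $(\log\eps^{-1})^{-1/2}$, which tends to $0$ as $\eps\downarrow0$, proving the proposition.
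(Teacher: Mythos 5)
Your proposal is correct, but it takes a genuinely different route from the paper: the paper does not prove Proposition~\ref{pr:limitreg} at all, it simply cites \cite[Corollary~7.2]{dunlap2020forward}, whereas you give a self-contained argument built entirely from ingredients already present in this paper --- the mild formulation \eqref{eq:umild}, the It\^o--Walsh isometry, and the uniform second-moment bound of Lemma~\ref{le:Vmomentbound} (with $p=2$, so your argument covers the full subcritical regime $\beta\lipc<\sqrt{2\pi}$, not just $\beta<\beta_0$). The key steps all check out: the factorization $\tfrac{1}{\eps^2}R(\tfrac{y_1-y_2}{\eps})=\int\phi^\eps(y_1-z)\phi^\eps(y_2-z)\,\dd z$ is exactly the definition of $R$, the two Young bounds give $A(\tau)\leq\min\bigl(\|\Delta_hG_\tau\|_{L^2}^2,\ \eps^{-2}\|\phi\|_{L^2}^2\|\Delta_hG_\tau\|_{L^1}^2\bigr)$ with the correct scaling $\|\phi^\eps\|_{L^2}=\eps^{-1}\|\phi\|_{L^2}$, the Gaussian identities $\|\Delta_hG_\tau\|_{L^2}^2=\tfrac{1}{2\pi\tau}(1-e^{-|h|^2/4\tau})$ and $\|\Delta_hG_\tau\|_{L^1}\leq\min(2,c|h|/\sqrt{\tau})$ are standard, and the time-splitting at $\tau\sim\eps^2$ and $\tau\sim|h|^2$ indeed yields $\int_0^tA(\tau)\,\dd\tau\leq C(\log_+(|h|/\eps)+1)$ uniformly in $t\leq T$; since $u\mapsto(1+\log_+u)/(1+u)^2$ is bounded on $[0,\infty)$, the conclusion follows with the quantitative rate $(\log\eps^{-1})^{-1/2}$. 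What each approach buys: the paper's citation is economical and consistent with the fact that the result is genuinely due to Dunlap--Gu, while your proof has the merit of being self-contained, of exhibiting an explicit convergence rate, and of making transparent where the mollification scale $\eps$ cures the small-time divergence that makes the two-dimensional SHE singular --- precisely the mechanism behind the normalization $1+\eps^{-1}|x_1-x_2|$ in the statement.
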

\begin{proof}
See \cite[Corollary 7.2]{dunlap2020forward}.
\end{proof}

\section{Proof of Theorem~\ref{thm:mainthm}}
\label{se:mainproof}
For fixed $T>0$ and $g\in C_c^\infty(\R^2)$, let 
\begin{equation}
    v_{\eps,T}(g)(s,y) = \beta {\1}_{[0,\frac{T}{\eps^2}]}(s) \sigma\left(\bV^{\eps}(s, y)\right) \int_{\R^2}G_{\frac{T}{\eps^2}-s}(\frac{x}{\eps}-y)g(x) \dd x. \nonumber
    \label{eq:mallivianv}
\end{equation}
By the formulation of $Y^{\eps, T}$ in \eqref{eq:bigY} and \cite[Proposition 1.3.8]{nualart2006malliavin}, we have 
\begin{equation}
\bY^{\eps,T}(g) = \delta(v_{\eps,T}(g)) \in {\malD}^{1,2},
\label{eq:deltaforY}\nonumber
\end{equation}
and
\begin{align}
\label{eq:mallivianbigY}
    &\quad D_{s,y}\left(\bY^{\eps,T}(g)\right)\\&= v_{\eps,T}(g)(s,y) \nonumber\\&+  \beta \int_{s}^{\frac{T}{\eps^2}}\int_{\R^2} \int_{\R^2} G_{\frac{T}{\eps^2}-r}(\frac{x}{\eps}-z)g(x) \dd x D_{s,y}\sigma\left(\bV^{\eps}(r,z)\right) \dd W_{\phi}(r, z)\nonumber\\
    &= v_{\eps,T}(g)(s,y) \nonumber\\&+  \beta \int_{s}^{\frac{T}{\eps^2}}\int_{\R^2} \int_{\R^2} G_{\frac{T}{\eps^2}-r}(\frac{x}{\eps}-z)g(x) \dd x \Sigma(r,z)D_{s,y}\bV^{\eps}(r,z) \dd W_{\phi}(r, z).\nonumber 
\end{align}
Here $\Sigma(r,z)$ is the same random process as in \eqref{eq:malDofbigV}.

Our goal is to prove that $Y^{\eps,T}(g)$ converges in law to $\bX^T(g)$ as defined in \eqref{eq:bigXlimit} when $\eps \to 0$. Since $U$ is the random distribution that solves the Edwards-Wilkinson equation \eqref{eq:edwardswilkinson},
we have that $\bX^T(g)$ is of normal distribution $N(0,\Sigma_g^T)$ where
\begin{align}
\label{eq:Varoflimit}
\Sigma_g^T &:= \Var \left(\int_{\R^2}U(T,x)g(x)\dd x\right) \\
&= \beta^2 \mathbb{E} \sigma\left(\Xi_{1, 2}(2)\right)^{2} \int_0^T\int_{\R^{2\times 3}}G_{T-s}(x-z)G_{T-s}(y-z)g(x)g(y)\dd x\dd y \dd z \dd s \nonumber\\
&= \beta^2 \mathbb{E} \sigma\left(\Xi_{1, 2}(2)\right)^{2} \int_0^T\int_{\R^2}\int_{\R^2}G_{2(T-s)}(x-y)g(x)g(y)\dd x\dd y  \dd s \nonumber\\
&=\beta^2 \mathbb{E}\sigma(\Xi_{1,2}(2))^2 \int_0^T \int_{\R^2}|G_{T-s}*g(y)|^2\dd y\dd s\nonumber.
\end{align}
Note that the last integral is finite for any $g\in C_c^{\infty}(\R^2)$, but is not finite if $g=\delta_0$.

By Proposition~\ref{pr:SteinsMethod}, the proof of Theorem~\ref{thm:mainthm} is reduced to showing that
\begin{equation}
\mathbb{E}\left|\Sigma_g^T-\langle DY^{\eps,T}(g),v_{\eps,T}(g)\rangle_{\hilb}\right|^2 \to 0, \qquad \text{ as }\eps \to 0.
    \label{eq:stiensMethodcond}
\end{equation}
We follow the procedure in \cite[Section 4]{gu2020nonlinear}. From \eqref{eq:mallivianbigY}, we have 
\begin{equation}
\langle DY^{\eps,T}(g),v_{\eps,T}(g)\rangle_{\hilb}  = A_{1,\eps}+A_{2,\eps},
\nonumber
\end{equation}
where
\begin{align}
A_{1,\eps} &= \langle v_{\eps,T}(g),v_{\eps,T}(g)\rangle_{\hilb} \\&= \beta^2 \int_0^{\frac{T}{\eps^2}}\int_{\R^2}\int_{\R^2}\sigma(\bV^\eps(s,y_1))\sigma(\bV^\eps(s,y_2))R(y_1-y_2)\nonumber\\&\cdot \left(\int_{\R^2}G_{\frac{T}{\eps^2}-s}(\frac{x_1}{\eps}-y_1)g(x_1) \dd x_1\right)\left(\int_{\R^2}G_{\frac{T}{\eps^2}-s}(\frac{x_2}{\eps}-y_2)g(x_2) \dd x_2\right)\dd y_1 \dd y_2 \dd s.\nonumber
    \label{eq:1stTerm}
\end{align}
and
\begin{align}
A_{2,\eps} &= \langle DY^{\eps,T}(g)-v_{\eps,T}(g) ,v_{\eps,T}(g)\rangle_{\hilb} \\
&= \beta^2 \int_0^{\frac{T}{\eps^2}}\int_{\R^{2\times 2}}  \sigma(\bV^\eps(s,y_1)) \left(\int_{\R^2}G_{\frac{T}{\eps^2}-s}(\frac{x_1}{\eps}-y_1)g(x_1) \dd x_1\right)\nonumber\\&\cdot \left(\int_{s}^{\frac{T}{\eps^2}}\int_{\R^2}\left( \int_{\R^2}  G_{\frac{T}{\eps^2}-r}(\frac{x_2}{\eps}-z)g(x_2) \dd x_2\right) \Sigma(r,z)D_{s,y_2}\bV^{\eps}(r,z) \dd W_{\phi}(r, z) \right)\nonumber\\&\cdot R(y_1-y_2)\dd y_1 \dd y_2 \dd s.\nonumber
        \label{eq:2ndTerm}
\end{align}

We notice that 
\begin{equation}
    \mathbb{E}\left|\Sigma_g^T-\langle DY^{\eps,T}(g),v_{\eps,T}(g)\rangle_{\hilb}\right|^2 \leq 2\mathbb{E}\left|\Sigma_g^T-A_{1,\eps} \right|^2+ 2\mathbb{E}\left|A_{2,\eps}\right|^2. 
\end{equation}
Thus it is sufficient to prove the following two lemmas.

\begin{lemma}\label{le:sufficientlemma1}
If $\beta < \beta_0$, \begin{equation}\mathbb{E}\left|\Sigma_g^T-A_{1,\eps} \right|^2 \to 0 \qquad\text{ as }\eps \to 0.\nonumber\end{equation}
\end{lemma}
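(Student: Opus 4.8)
The plan is to use that $\Sigma_g^T$ is deterministic, so
\begin{equation*}
\mathbb{E}\left|\Sigma_g^T-A_{1,\eps}\right|^2 = \left|\Sigma_g^T-\mathbb{E}A_{1,\eps}\right|^2 + \Var(A_{1,\eps}),
\end{equation*}
and to treat the two pieces separately: the mean in macroscopic variables (where Proposition~\ref{pr:limitObject} and Proposition~\ref{pr:limitreg} apply), the variance in the microscopic variables of the definition of $A_{1,\eps}$ (where the Malliavin structure and Lemma~\ref{le:MallivianMomentbound} apply).

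For the mean, since $\mathbb{E}A_{1,\eps}$ depends only on the law of the field, I would substitute $s=\tau/\eps^2$, $y_i=\xi_i/\eps$ and use $G_{(T-\tau)/\eps^2}(\tfrac{x-\xi}{\eps})=\eps^2 G_{T-\tau}(x-\xi)$ together with $\bV^\eps(\tfrac{\tau}{\eps^2},\tfrac{\xi}{\eps})=u^\eps(\tau,\xi)$ jointly in law, to obtain
\begin{equation*}
A_{1,\eps}\stackrel{\text{law}}{=}\beta^2\int_0^T\!\int_{\R^{2\times 2}}\eps^{-2}R(\tfrac{\xi_1-\xi_2}{\eps})\,\sigma(u^\eps(\tau,\xi_1))\sigma(u^\eps(\tau,\xi_2))\,(G_{T-\tau}*g)(\xi_1)(G_{T-\tau}*g)(\xi_2)\,\dd\xi_1\dd\xi_2\dd\tau.
\end{equation*}
Taking expectations and substituting $\xi_2=\xi_1+\eps w$ turns $\eps^{-2}R(\tfrac{\cdot}{\eps})\dd\xi_2$ into the probability measure $R(w)\dd w$ (recall $R$ is even, compactly supported, with $\int R=1$). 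On the compact $w$-support, Proposition~\ref{pr:limitreg} (with $|x_1-x_2|=\eps|w|$) forces the two factors to agree in $L^2$ as $\eps\downarrow0$, while Proposition~\ref{pr:limitObject} and the uniform $L^4$ bound \eqref{eq:bigVmobd} (hence uniform integrability) give $\mathbb{E}\,\sigma(u^\eps(\tau,\xi_1))^2\to\mathbb{E}\,\sigma(\Xi_{1,2}(2))^2$. Dominated convergence in $(\tau,w,\xi_1)$, with dominating function built from \eqref{eq:bigVmobd} and $\int_0^T\!\int_{\R^2}|G_{T-\tau}*g|^2<\infty$, then yields $\mathbb{E}A_{1,\eps}\to\beta^2\,\mathbb{E}\,\sigma(\Xi_{1,2}(2))^2\int_0^T\!\int_{\R^2}|G_{T-\tau}*g(\xi)|^2\dd\xi\dd\tau=\Sigma_g^T$.

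For the variance I keep $A_{1,\eps}=\beta^2\int_0^{T/\eps^2}\!\int_{\R^{2\times 2}}Q(s,y_1,y_2)\,\sigma(\bV^\eps(s,y_1))\sigma(\bV^\eps(s,y_2))$ with the deterministic weight $Q(s,y_1,y_2)=R(y_1-y_2)g^\eps(s,y_1)g^\eps(s,y_2)$, $g^\eps(s,y)=\int_{\R^2}G_{T/\eps^2-s}(\tfrac{x}{\eps}-y)g(x)\dd x$, so that $\Var(A_{1,\eps})$ is a double space--time integral of $Q(s,\cdot)Q(s',\cdot)$ against $\Cov(\sigma\sigma,\sigma\sigma)$. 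The device is the covariance identity coming from Clark--Ocone (Proposition~\ref{pr:clark}) and the It\^o isometry,
\begin{equation*}
\Cov(F,G)=\mathbb{E}\int_0^{+\infty}\!\int_{\R^{2\times 2}}\mathbb{E}[D_{r,z}F\mid\filt_r]\,\mathbb{E}[D_{r,z'}G\mid\filt_r]\,R(z-z')\,\dd z\,\dd z'\,\dd r,
\end{equation*}
applied to $F=\sigma(\bV^\eps(s,y_1))\sigma(\bV^\eps(s,y_2))$ and $G=\sigma(\bV^\eps(s',y_3))\sigma(\bV^\eps(s',y_4))$. Expanding $D_{r,z}F$ by the chain rule as in \eqref{eq:malDofbigV}, using $\|\mathbb{E}[\cdot\mid\filt_r]\|_{L^2}\le\|\cdot\|_{L^2}$, H\"older's inequality ($\tfrac12=\tfrac14+\tfrac14$), the bound $|\Sigma|\le\lipc$, the uniform $L^4$ moment bound \eqref{eq:bigVmobd}, and the Malliavin $L^4$ bound of Lemma~\ref{le:MallivianMomentbound} (which carries the decisive factor $\tfrac{1}{\sqrt{\log\eps^{-1}}}$), I obtain
\begin{equation*}
|\Cov(F,G)|\lesssim\frac{1}{\log\eps^{-1}}\int_0^{s\wedge s'}\!\int_{\R^{2\times 2}}\big(G_{s-r}(y_1-z)+G_{s-r}(y_2-z)\big)\big(G_{s'-r}(y_3-z')+G_{s'-r}(y_4-z')\big)R(z-z')\,\dd z\,\dd z'\,\dd r.
\end{equation*}
The simultaneous use of Lemma~\ref{le:Vmomentbound} and Lemma~\ref{le:MallivianMomentbound} at $p=4$ is exactly what forces $\beta<\beta_0=\tfrac{1}{2\sqrt6}\tfrac{\sqrt{2\pi}}{\lipc}$ (Remark~\ref{re:threshold}).

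It then remains to insert this bound into $\Var(A_{1,\eps})$ and to show that the resulting fully explicit deterministic integral of heat kernels and $R$'s against $Q(s,\cdot)Q(s',\cdot)$ over $[0,T/\eps^2]^2$ is of lower order than $\log\eps^{-1}$, so that the prefactor $\tfrac1{\log\eps^{-1}}$ sends the variance to $0$; this is the main obstacle. A calibration shows the time--space integral carrying $Q(s,\cdot)Q(s',\cdot)$ alone is harmless, since $\int_0^{T/\eps^2}\!\int_{\R^2} g^\eps(s,y)^2\dd y\dd s=\int_0^T\!\int_{\R^2}|G_{T-\tau}*g|^2$ exactly (independently of $\eps$), so the real danger is a \emph{second} logarithm generated by the near-diagonal region $s\approx s'$: collapsing the $R$-convolution produces an inner factor $\int_0^{s\wedge s'}G_{s+s'-2r}(y_1-y_3)\dd r\le\tfrac{1}{4\pi}\log\tfrac{s+s'}{|s-s'|}$, which is only logarithmically singular and hence integrable. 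I would control the whole expression by the heat-kernel and logarithmic bookkeeping of Lemma~\ref{lemma:2mobdlemma}, using $\int_{\R^2}G_{t-s}(z)R(z)\dd z\le\tfrac{1}{2\pi(t-s)}\wedge\|\phi\|_\infty$ and the spatial localization provided by the weights $g^\eps(s,\cdot)$, so that after integration no further power of $\log\eps^{-1}$ survives. Combining the mean and variance estimates yields $\mathbb{E}|\Sigma_g^T-A_{1,\eps}|^2\to0$.
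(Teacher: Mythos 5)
Your proposal is correct, and its first half (the mean) is the same argument as the paper's: change to macroscopic variables, use Proposition~\ref{pr:limitObject} together with Proposition~\ref{pr:limitreg} to identify the limit of $\mathbb{E}\left[\sigma\sigma\right]$, and use the moment bound of Lemma~\ref{le:Vmomentbound} for uniform integrability. Where you genuinely diverge is the organization of the variance estimate. The paper first applies Minkowski's inequality in time, $\sqrt{\Var(\int_0^T\Phi_s\,\dd s)}\le\int_0^T\sqrt{\Var(\Phi_s)}\,\dd s$, so the Clark--Ocone identity is only ever applied to covariances of $\Lambda_\eps$ at the \emph{same} time $s$; the resulting deterministic integral is then killed by pure $L^1$/$L^\infty$ convolution bounds \eqref{eq:linfinitybound}, \eqref{eq:covo1bound}, and no singular kernel in the time variables ever appears. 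You instead expand $\Var(A_{1,\eps})$ as a full double time integral, so your Clark--Ocone covariance bound couples two distinct times $s\neq s'$, and you must confront the near-diagonal factor $\int_0^{s\wedge s'}G_{s+s'-2r}(\cdot)\,\dd r\le\tfrac{1}{4\pi}\log\tfrac{s+s'}{|s-s'|}$. Your resolution of this is sound: the logarithmic singularity is integrable, and the $\eps^4$ gained from bounding two of the four $g^\eps$-factors in sup-norm compensates the $(T/\eps^2)^2$ volume of the time square, so that $\int_0^{M}\int_0^{M}\log\tfrac{s+s'}{|s-s'|}\,\dd s\,\dd s'=CM^2$ with $M=T/\eps^2$ leaves exactly $\Var(A_{1,\eps})=O(1/\log\eps^{-1})$. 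Two caveats. First, your appeal to Lemma~\ref{lemma:2mobdlemma} for the closing bookkeeping is a misattribution: that lemma controls an iterated Gr\"onwall-type series and is not the relevant tool; what actually closes your argument is the elementary double-time computation just described (alternatively one can integrate the spatial variables first against the heat kernels, as the paper does, and never see the singularity). Second, what the paper's Minkowski-in-time step buys is precisely that your ``dangerous second logarithm'' never arises, at the price of estimating $\sqrt{\Var}$ rather than $\Var$; your route is slightly heavier in bookkeeping but yields the same $1/\log\eps^{-1}$ rate, and both need $p=4$ in Lemmas~\ref{le:Vmomentbound} and \ref{le:MallivianMomentbound}, hence the same restriction $\beta<\beta_0$.
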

\begin{lemma}\label{le:sufficientlemma2}
If $\beta < \beta_0$, \begin{equation}\mathbb{E}\left|A_{2,\eps} \right|^2 \to 0 \qquad \text{ as }\eps \to 0.\nonumber\end{equation}
\end{lemma}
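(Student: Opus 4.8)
The plan is to exploit that $A_{2,\eps}$ is, after a stochastic Fubini, itself an It\^o--Walsh stochastic integral, so that the It\^o isometry produces a \emph{product of two} Malliavin derivatives and hence two factors of $(\log\eps^{-1})^{-1/2}$ from Lemma~\ref{le:MallivianMomentbound}. Writing $I(s,y_2)$ for the inner stochastic integral appearing in the last line of \eqref{eq:mallivianbigY} and setting $h_\eps(s,y_2):=\beta^2\int_{\R^2}\sigma(\bV^\eps(s,y_1))\big(\int_{\R^2}G_{\frac{T}{\eps^2}-s}(\frac{x}{\eps}-y_1)g(x)\dd x\big)R(y_1-y_2)\dd y_1$, we have $A_{2,\eps}=\int_0^{\frac{T}{\eps^2}}\int_{\R^2}h_\eps(s,y_2)I(s,y_2)\dd y_2\dd s$. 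Since $D_{s,y_2}\bV^\eps(r,z)$ is $\filt_r$-measurable for $s\le r$ (it solves the adapted equation \eqref{eq:malDofbigV}) and $h_\eps(s,\cdot)$ is $\filt_s$-measurable, the integrand produced by interchanging the $\dd y_2\,\dd s$ and $\dd W_\phi(r,z)$ integrations is adapted, and Lemma~\ref{le:Vmomentbound} together with Lemma~\ref{le:MallivianMomentbound} supply the integrability needed to justify the interchange. This rewrites $A_{2,\eps}=\int_0^{\frac{T}{\eps^2}}\int_{\R^2}\Theta_\eps(r,z)\dd W_\phi(r,z)$ with the adapted integrand $\Theta_\eps(r,z)=\big(\int_{\R^2}G_{\frac{T}{\eps^2}-r}(\frac{x}{\eps}-z)g(x)\dd x\big)\Sigma(r,z)\,M_\eps(r,z)$, where $M_\eps(r,z):=\int_0^r\int_{\R^2}h_\eps(s,y_2)D_{s,y_2}\bV^\eps(r,z)\dd y_2\dd s$.

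By the It\^o isometry, $\mathbb{E}|A_{2,\eps}|^2=\mathbb{E}\langle\Theta_\eps,\Theta_\eps\rangle_{\hilb}$. I would bound the integrand using $|\Sigma|\le\lipc$ and, via Cauchy--Schwarz in $\Omega$, $\mathbb{E}[\Theta_\eps(r,z_1)\Theta_\eps(r,z_2)]\le\lipc^2\,|\Phi_r(z_1)\Phi_r(z_2)|\,\|M_\eps(r,z_1)\|_{2}\|M_\eps(r,z_2)\|_{2}$, where $\Phi_r(z):=\int_{\R^2}G_{\frac{T}{\eps^2}-r}(\frac{x}{\eps}-z)g(x)\dd x$. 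For each factor I would apply Minkowski's integral inequality and H\"older, then Lemma~\ref{le:MallivianMomentbound} with $p=4$ to each Malliavin derivative and the uniform bound \eqref{eq:bigVmobd} to the $\sigma(\bV^\eps)$ inside $h_\eps$; this is exactly where the $p=4$ estimates are needed. Each Malliavin derivative contributes $C'(\log\eps^{-1})^{-1/2}G_{r-s}(z-y_2)$, so the two of them together yield the decisive factor $(\log\eps^{-1})^{-1}$ and reduce the problem to a purely deterministic multiple integral of heat kernels and copies of $R$.

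What remains is to show this deterministic integral is bounded uniformly in $\eps$. I would pass to the macroscopic variables $\hat s=\eps^2 s$, $\hat y=\eps y$, under which $\Phi_s(y)=\eps^2(G_{T-\hat s}*g)(\hat y)$ and $G_{r-s}(z-y)=\eps^2 G_{\hat r-\hat s}(\hat z-\hat y)$, collapse the convolutions against $R$ using that $R$ has unit mass and shrinks to a point at the macroscopic scale, and use the heat-semigroup identity to perform the $y_2$- and $y_2'$-integrations inside $M_\eps$. The accumulated powers of $\eps$ cancel exactly, just as they do in the computation of $A_{1,\eps}\to\Sigma_g^T$, leaving an $\eps$-independent finite heat-kernel integral over $[0,T]$. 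Hence $\mathbb{E}|A_{2,\eps}|^2\lesssim(\log\eps^{-1})^{-1}\to0$.

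The main subtlety—and the reason the rewriting above is essential—is that the required smallness must come from the isometric pairing rather than from a crude estimate. A direct Cauchy--Schwarz in $\hilb$, or a H\"older bound that controls $\|I(s,y_2)\|_{L^4}$ on its own, forces the two Malliavin derivatives to sit at the \emph{same} base point; the resulting diagonal product $G_{r-s}(\cdot)^2$ then produces a time integral $\int_s^{\frac{T}{\eps^2}}\big(\tfrac{1}{r-s}\wedge\eps^{-2}\big)\dd r\sim\log\eps^{-1}$ that cancels one of the two $(\log\eps^{-1})^{-1/2}$ factors, leaving only an uninformative $O(1)$ bound. Writing $A_{2,\eps}$ as a single stochastic integral keeps the two derivatives anchored at the distinct points $(s,y_2)$ and $(s',y_2')$, so the heat kernels $G_{r-s}(\hat z-\hat y_2)$ and $G_{r-s'}(\hat z-\hat y_2')$ carry distinct centers and no diagonal coincidence occurs; verifying this absence of a residual logarithm, and the attendant bookkeeping of $\eps$-powers together with the heat-kernel time singularity regularized by $R$ at scale $\eps^2$, is the technical heart of the argument.
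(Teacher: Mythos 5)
Your proof is correct, and it rests on exactly the same mechanism as the paper's: the It\^o--Walsh isometry is deployed so that the two Malliavin derivatives appear at \emph{distinct} base points and distinct evaluation points, each factor is then estimated via Lemma~\ref{le:MallivianMomentbound} with $p=4$ (together with Lemma~\ref{le:Vmomentbound} and \eqref{eq:bigVmobd} for the $\sigma(\bV^\eps)$ factors) to produce the decisive $(\log\eps^{-1})^{-1}$, and the macroscopic change of variables together with the bounds \eqref{eq:l1bound} and \eqref{eq:covoinftybound} reduces what remains to an $O(1)$ heat-kernel integral; your diagnosis of why a standalone bound on $\|I(s,y_2)\|_{L^4}$ must fail (the diagonal product of heat kernels regenerates a factor $\log\eps^{-1}$ that cancels the gain) is also accurate and is precisely the pitfall the paper's proof is designed around. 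The organization, however, is genuinely different from the paper's. The paper never interchanges the Lebesgue and stochastic integrals: it applies Minkowski in the macroscopic time variable $s$, expands the square of the $(y_1,y_2)$-integral into a double integral over $(y_1,y_2,y_1',y_2')$, and then applies the isometry conditionally on $\filt_{s/\eps^2}$ to the product of the two inner stochastic integrals, both of which start at time $s/\eps^2$ and carry $\filt_{s/\eps^2}$-measurable prefactors $\sigma(\bV^\eps(\frac{s}{\eps^2},\cdot))$. You instead perform a stochastic Fubini to exhibit $A_{2,\eps}=\delta(\Theta_\eps)$ as a single Walsh integral and apply the isometry once, globally. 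Your route is arguably more transparent---the smallness is visible at the level of the integrand, since $\|M_\eps(r,z)\|_{2}\lesssim(\log\eps^{-1})^{-1/2}$ uniformly in $(r,z)$---and it isolates the variance computation cleanly; but it costs you the justification of the interchange (joint measurability, adaptedness of $(r,z)\mapsto h_\eps(s,y_2)D_{s,y_2}\bV^\eps(r,z)$, and an integrability condition), a technical step you correctly flag as needing the moment bounds but which the paper's arrangement avoids altogether. Both arguments land on the same rate, $\mathbb{E}\left|A_{2,\eps}\right|^2=O\left((\log\eps^{-1})^{-1}\right)$.
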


To simplify the equations, we use the convolution notation
\begin{equation}
    G_t*g(x) := \int_{\R^2}G_t(x-y)g(y)\dd y, \quad \text{for all }(t,x) \in (0,+\infty) \times \R^2.\label{heatsemidef}
    \nonumber
\end{equation}
Then by the properties of heat kernel, we have for all $t>0$,
\begin{align}
&G_t*g \in C^{\infty}(\R^2);\nonumber\\
&\|G_t*g\|_{L^{\infty}(\R^2)}\leq \|g\|_{L^{\infty}(\R^2)};\label{eq:linfinitybound}\\
&\|G_t*g\|_{L^1(\R^2)}\leq \|g\|_{L^{1}(\R^2)};\label{eq:l1bound}
\end{align}
and for all $x \in \R^2$,
\begin{equation}
    \lim_{t\to 0}G_t*g(x)=g(x).\nonumber\label{eq:heatkernellimit}
\end{equation}

We may rewrite
\begin{equation}
\int_{\R^2}G_{\frac{T}{\eps^2}-s}(\frac{x}{\eps}-y)g(x)\dd x = \eps^2 \int_{\R^2}G_{T-\eps^2s}(x-\eps y)g(x)\dd x = \eps^2 G_{T-\eps^2s}*g(\eps y).
    \nonumber \label{eq:heatsemirep}
\end{equation}
By a change of variable $\eps^2s \mapsto s$, $\eps y_1 \mapsto y_1$, $y_2-y_1 \mapsto y_2$, we have
\begin{align}
A_{1,\eps} &= \beta^2 \int_0^T\int_{\R^2}\int_{\R^2}\sigma\left(\bV^\eps(\frac{s}{\eps^2},\frac{y_1}{\eps})\right)\sigma\left(\bV^\eps(\frac{s}{\eps^2},\frac{y_1}{\eps}+y_2)\right)R(y_2) \label{eq:covAfirst}\\&\cdot G_{T-s}*g(y_1)G_{T-s}*g(y_1+\eps y_2)\dd y_1 \dd y_2 \dd s,\nonumber
\end{align}
and
\begin{align}
&A_{2,\eps} = \beta^2 \int_0^{T}\int_{\R^{2\times 2}}  \sigma\left(\bV^\eps(\frac{s}{\eps^2},\frac{y_1}{\eps})\right) \cdot \bigg(\int_{\frac{s}{\eps^2}}^{\frac{T}{\eps^2}}\int_{\R^2}
G_{T-s}*g(y_1)R(y_2) \label{eq:covAsecond}
\\&
G_{T-\eps^2r}*g(\eps z)
\Sigma(r,z)D_{\frac{s}{\eps^2},\frac{y_1}{\eps}+y_2}\bV^{\eps}(r,z) \dd W_{\phi}(r, z) \bigg)
\dd y_1 \dd y_2 \dd s.\nonumber
\end{align}

Without loss of generality, we assume that $g\geq 0$ when we estimate the integrals involving $g$ in the part (ii) of the proof of Lemma~\ref{le:sufficientlemma1} and the proof of Lemma~\ref{le:sufficientlemma2}. Observing that $|G_t*g(x)|\leq G_t*|g|(x)$ for any $t>0$ and $x \in \R^2$, one can easily check for general $g \in C_c^{\infty}(\R^2)$.

\begin{proof}[Proof of Lemma~\ref{le:sufficientlemma1}]
To prove $\mathbb{E}\left|\Sigma_g^T-A_{1,\eps} \right|^2 \to 0$ as $\eps \to 0$, we show that (i) $\mathbb{E}(A_{1,\eps})\to \Sigma_g^T$ as $\eps \to 0$, and (ii) $\Var(A_{1,\eps})\to 0$ as $\eps \to 0$.

Part (i): By \eqref{eq:covAfirst}, 
\begin{align}
    \mathbb{E}(A_{1,\eps}) &= \beta^2 \int_0^T\int_{\R^2}\int_{\R^2}\mathbb{E}\left(\sigma\left(\bV^\eps(\frac{s}{\eps^2},\frac{y_1}{\eps})\right)\sigma\left(\bV^\eps(\frac{s}{\eps^2},\frac{y_1}{\eps}+y_2)\right)\right)R(y_2) \label{eq:expectationA1}\\&\cdot G_{T-s}*g(y_1)G_{T-s}*g(y_1+\eps y_2)\dd y_1 \dd y_2 \dd s.\nonumber
\end{align}
By Proposition~\ref{pr:limitObject},  \ref{pr:limitreg} and Lemma~\ref{le:Vmomentbound}, we have that, for any $s \in (0,T)$ and $y_1,y_2 \in \R^2$, 
\begin{equation}
\mathbb{E}\left(\sigma\left(\bV^\eps(\frac{s}{\eps^2},\frac{y_1}{\eps})\right)\sigma\left(\bV^\eps(\frac{s}{\eps^2},\frac{y_1}{\eps}+y_2)\right)\right) \to \mathbb{E}\sigma(\Xi_{1,2}(2))^2, \quad \text{as }\eps \to 0.\nonumber
\end{equation}
The Lipschitz condition and Lemma~\ref{le:Vmomentbound} gives the uniform integrability to pass to the limit in \eqref{eq:expectationA1} and conclude that
\begin{align}
    \mathbb{E}(A_{1,\eps}) &\to \beta^2 \int_0^T\int_{\R^4} \mathbb{E}\sigma(\Xi_{1,2}(2))^2
    R(y_2) \cdot |G_{T-s}*g(y_1)|^2 \dd y_1\dd y_2 \dd s\nonumber\\
    &=\beta^2 \mathbb{E}\sigma(\Xi_{1,2}(2))^2 \int_0^T \int_{\R^2}|G_{T-s}*g(y_1)|^2\dd y_1\dd s\nonumber\\
    &= \Sigma_g^T, \qquad \text{as } \eps \to 0. \nonumber
\end{align}

Part (ii):
As in \cite[Lemma 4.1]{gu2020nonlinear}, let \begin{equation}
    \Lambda_{\eps}(s,y_1,y_2) = \sigma\left(\bV^\eps(\frac{s}{\eps^2},\frac{y_1}{\eps})\right)\sigma\left(\bV^\eps(\frac{s}{\eps^2},\frac{y_1}{\eps}+y_2)\right).\nonumber
\end{equation}
Then by the chain rule, for all $0\leq r < \frac{s}{\eps^2}$ and $z\in \R^2$,
\begin{align}
D_{r,z}\Lambda_{\eps}(s,y_1,y_2)&= \Sigma\left(\frac{s}{\eps^2},\frac{y_1}{\eps}\right)D_{r,z}\bV^{\eps}(\frac{s}{\eps^2},\frac{y_1}{\eps})\sigma\left(\bV^\eps(\frac{s}{\eps^2},\frac{y_1}{\eps}+y_2)\right)
    \nonumber\\
    &+\sigma\left(\bV^\eps(\frac{s}{\eps^2},\frac{y_1}{\eps})\right)\Sigma\left(\frac{s}{\eps^2},\frac{y_1}{\eps}+y_2\right)D_{r,z}\bV^{\eps}(\frac{s}{\eps^2},\frac{y_1}{\eps}+y_2).\nonumber
\end{align}
Use H\"older inequality and apply the uniform moment bounds from Lemma~\ref{le:Vmomentbound} and \ref{le:MallivianMomentbound} (with $p=4$). We obtain
\begin{align}
\label{eq:momentboundLambda}
    &\left(\mathbb{E}\left[D_{r,z}\Lambda_{\eps}(s,y_1,y_2)\right]^2\right)^{\frac{1}{2}}\\
    &\leq \frac{C}{\sqrt{\log \eps^{-1}}} \left(G_{\frac{s}{\eps^2}-r}(\frac{y_1}{\eps}-z)+G_{\frac{s}{\eps^2}-r}(\frac{y_1}{\eps}+y_2-z)\right) \nonumber
\end{align}

Now, by \eqref{eq:covAfirst} and using that 
$
    \sqrt{\operatorname{Var}\left(\int_{0}^{t} \Phi_{s} 
    \dd s\right)} \leq \int_{0}^{t} \sqrt{\operatorname{Var}\left(\Phi_{s}\right)} \dd s
$ for any process $\Phi=\{\Phi(s), s \in[0, t]\}$ with $\sqrt{\operatorname{Var}\left(\Phi_{s}\right)}$ being integrable on $[0, t]$,
we obtain
\begin{align}
    \sqrt{\Var(A_{1,\eps})} &\leq \beta^2 \int_0^T\bigg(\int_{\R^8} \Cov\left[\Lambda_{\eps}(s,y_1,y_2), \Lambda_{\eps}(s,y_1',y_2')\right]R(y_2)R(y_2')  \label{eq:varA1repr}\\
    & \cdot G_{T-s}*g(y_1)G_{T-s}*g(y_1+\eps y_2)G_{T-s}*g(y_1')G_{T-s}*g(y_1'+\eps y_2') \nonumber\\
    &\dd y_1 \dd y_1' \dd y_2 \dd y_2'\bigg)^{\frac{1}{2}}\dd s . \nonumber
\end{align}
By the Clark-Ocone formula (Proposition~\ref{pr:clark}),
\begin{equation}
\Lambda_{\eps}(s,y_1,y_2) = \mathbb{E}\left(\Lambda_{\eps}(s,y_1,y_2)\right) + \int_0^{\frac{s}{\eps^2}}\int_{\R^2}\mathbb{E}\left(D_{r,z}\Lambda_{\eps}(s,y_1,y_2) |\filt_r \right)\dd W_{\phi}(r,z).
    \nonumber
\end{equation}
Using the H\"older's inequality and the Jensen's inequality, we have
\begin{align}
&\left|\Cov\left[\Lambda_{\eps}(s,y_1,y_2), \Lambda_{\eps}(s,y_1',y_2')\right]\right|\nonumber\\
&=\int_0^{\frac{s}{\eps^2}}\int_{\R^4}\mathbb{E}\left|\mathbb{E}\left(D_{r,z}\Lambda_{\eps}(s,y_1,y_2) |\filt_r \right)\mathbb{E}\left(D_{r,z'}\Lambda_{\eps}(s,y_1',y_2') |\filt_r \right)\right|\nonumber\\
&\cdot R(z-z')\dd z \dd z' \dd r\nonumber\\
& \leq \int_0^{\frac{s}{\eps^2}}\int_{\R^4}\left(\mathbb{E}\left[D_{r,z}\Lambda_{\eps}(s,y_1,y_2) \right]^{2}\right)^{\frac{1}{2}}\left(\mathbb{E}\left[D_{r,z'}\Lambda_{\eps}(s,y_1',y_2')\right]^2\right)^{\frac{1}{2}}\nonumber\\
&\cdot R(z-z')\dd z \dd z' \dd r
    \nonumber\\
    & \leq \frac{C^2}{\log \eps^{-1}}\int_0^{\frac{s}{\eps^2}}\int_{\R^4} \left(G_{\frac{s}{\eps^2}-r}(\frac{y_1}{\eps}-z)+G_{\frac{s}{\eps^2}-r}(\frac{y_1}{\eps}+y_2-z)\right)\nonumber\\ &\cdot\left(G_{\frac{s}{\eps^2}-r}(\frac{y_1'}{\eps}-z')+G_{\frac{s}{\eps^2}-r}(\frac{y_1'}{\eps}+y_2'-z')\right)R(z-z')\dd z \dd z' \dd r.\nonumber\\
    & = \frac{C^2}{\log \eps^{-1}}\int_0^{s}\int_{\R^4} \left(G_{s-r}(y_1-z)+G_{s-r}(y_1+\eps y_2 - z)\right)\nonumber\\ &\cdot\left(G_{s-r }(y_1'-z')+G_{s-r}(y_1'+\eps y_2'- z')\right)\frac{1}{\eps^2}R(\frac{z-z'}{\eps})\dd z \dd z' \dd r.\nonumber
\end{align}
We applied \eqref{eq:momentboundLambda} and do a change of variable in the last two steps.

If we do a change of variable $z-z'\mapsto z$ and integrate in $z'$, the above expression equals
\begin{align}
      & \frac{C^2}{\log \eps^{-1}}\int_0^{s}\int_{\R^2}
      G_{2s-2r}(y_1-y_1'-z)\frac{1}{\eps^2}R(\frac{z}{\eps})+G_{2s-2r}(y_1+\eps y_2-y_1'-z)\frac{1}{\eps^2}R(\frac{z}{\eps})\\
      &+G_{2s-2r}(y_1-y_1'-\eps y_2'-z)\frac{1}{\eps^2}R(\frac{z}{\eps})+ G_{2s-2r}(y_1+\eps y_2-y_1'-\eps y_2'-z)\frac{1}{\eps^2}R(\frac{z}{\eps})\dd z \dd r.\nonumber
\end{align}
Denote $R^{\eps}(\cdot):=\frac{1}{\eps^2}R(\frac{\cdot}{\eps})$. We have $\|R^{\eps}\|_{L^1(\R^2)}=1$ and for all $t> 0$, it holds that
\begin{equation}
  \left\|G_{t}*g*R^{\eps}\right\|_{L^{\infty}(\R^2)} \leq \left\|g*R^{\eps}\right\|_{L^{\infty}(\R^2)} \leq \left\|g\right\|_{L^{\infty}(\R^2)}, \label{eq:covoinftybound}
\end{equation}
and
\begin{equation}
      \left\|G_{t}*g*R^{\eps}\right\|_{L^{1}(\R^2)} \leq \left\|g*R^{\eps}\right\|_{L^1(\R^2)} \leq \left\|g\right\|_{L^{1}(\R^2)}. \label{eq:covo1bound}
\end{equation}
Go back to \eqref{eq:varA1repr} with a change of variable $\eps y_2 \mapsto y_2$ and $\eps y_2' \mapsto y_2'$, we have 
\begin{align}
    &\sqrt{\Var(A_{1,\eps})} \nonumber\\ &\leq \frac{\beta^2C}{\sqrt{\log \eps^{-1}}} \int_0^T\bigg(\int_{\R^8} \int_0^s\big(G_{2s-2r} * R^{\eps}(y_1-y_1')  \nonumber\\
    &+G_{2s-2r} * R^{\eps}(y_1+y_2-y_1')+G_{2s-2r} * R^{\eps}(y_1-y_1'-y_2')\nonumber
   \\
   &+G_{2s-2r}*R^{\eps}(y_1+y_2-y_1'-y_2')\big)G_{T-s}*g(y_1)G_{T-s}*g(y_1+ y_2)\nonumber\\
    & \cdot G_{T-s}*g(y_1')G_{T-s}*g(y_1'+ y_2') R^{\eps}(y_2)R^{\eps}(y_2') \dd r\dd y_1 \dd y_1' \dd y_2 \dd y_2'\bigg)^{\frac{1}{2}}\dd s  \nonumber\\
    &\leq \frac{\beta^2C}{\sqrt{\log \eps^{-1}}} \int_0^T\bigg(\int_0^s\int_{\R^8} \big(G_{2s-2r} * R^{\eps}(y_1-y_1')  
    \nonumber\\&+G_{2s-2r} * R^{\eps}(y_1+y_2-y_1')+G_{2s-2r} * R^{\eps}(y_1-y_1'-y_2')
   \nonumber\\&+G_{2s-2r}*R^{\eps}(y_1+y_2-y_1'-y_2')\big)G_{T-s}*g(y_1) \cdot R^{\eps}(y_2)R^{\eps}(y_2') \dd y_1 \dd y_1' \dd y_2 \dd y_2'\dd r\bigg)^{\frac{1}{2}}\dd s 
    \nonumber\\
    & = \frac{\beta^2C}{\sqrt{\log \eps^{-1}}} \int_0^T\bigg(\int_0^s\int_{\R^6} \big(G_{T+s-2r} * R^{\eps}*g(y_1')  +G_{T+s-2r} * R^{\eps}*g(y_1'-y_2) \nonumber\\&+G_{T+s-2r} * R^{\eps}*g(y_1'+y_2')+G_{T+s-2r}*R^{\eps}*g(y_1'+y_2'-y_2)\big) \nonumber\\&\cdot R^{\eps}(y_2)R^{\eps}(y_2') \dd y_1' \dd y_2 \dd y_2'\dd r\bigg)^{\frac{1}{2}}\dd s 
    \nonumber
\end{align}
If we 
integrate each term in the order of $y_1$, $y_1'$, $y_2$, $y_2'$ and use the inequalities \eqref{eq:linfinitybound} and \eqref{eq:covo1bound}, the above is bounded by 
\begin{align}
       \frac{\beta^2C}{\sqrt{\log \eps^{-1}}}  \int_0^T\left( \int_0^s \int_{\R^4}R^{\eps}(y_2)R^{\eps}(y_2')\dd y_2 \dd y_2'\dd r\right)^{\frac{1}{2}}\dd s \leq \frac{\beta^2C}{\sqrt{\log \eps^{-1}}} \nonumber
\end{align}
The constant $C$ may vary at each occurrence. The last $C$ depend on $T$ and $\beta \lipc$.
\end{proof}

\begin{proof}[Proof of Lemma~\ref{le:sufficientlemma2}]
By Minkowski inequality and \eqref{eq:covAsecond}, 
\begin{align}
&\label{eq:A2l2bound}
    \left(\mathbb{E}|A_{2,\eps}|^2\right)^{\frac{1}{2}} \\&\leq \beta^2 \int_0^{T} \bigg(\mathbb{E}\bigg|\int_{\R^{2\times 2}}\sigma(\bV^\eps(\frac{s}{\eps^2},\frac{y_1}{\eps})) 
G_{T-s}*g(y_1)R(y_2) \nonumber\\
&\cdot \int_{\frac{s}{\eps^2}}^{\frac{T}{\eps^2}}\int_{\R^2}
G_{T-\eps^2r}*g(\eps z)
\Sigma(r,z)D_{\frac{s}{\eps^2},\frac{y_1}{\eps}+y_2}\bV^{\eps}(r,z) \dd W_{\phi}(r, z)\dd y_1 \dd y_2 \bigg|^2\bigg)^{\frac{1}{2}} \dd s\nonumber\\
&= \beta^2 \int_0^T \bigg( \int_{\R^{2\times 2}}\int_{\R^{2\times 2}} G_{T-s}*g(y_1)R(y_2) G_{T-s}*g(y_1')R(y_2')\nonumber\\
&\cdot\mathbb{E}\bigg[ \sigma(\bV^{\eps}(\frac{s}{\eps^2},\frac{y_1}{\eps}))
\sigma(\bV^{\eps}(\frac{s}{\eps^2},\frac{y_1'}{\eps}))
\nonumber\\
&\cdot\int_{\frac{s}{\eps^2}}^{\frac{T}{\eps^2}}\int_{\R^2}
G_{T-\eps^2r}*g(\eps z)
\Sigma(r,z)D_{\frac{s}{\eps^2},\frac{y_1}{\eps}+y_2}\bV^{\eps}(r,z) \dd W_{\phi}(r, z)
\nonumber\\
&\cdot\int_{\frac{s}{\eps^2}}^{\frac{T}{\eps^2}}\int_{\R^2}
G_{T-\eps^2r'}*g(\eps z')
\Sigma(r',z')D_{\frac{s}{\eps^2},\frac{y_1'}{\eps}+y_2'}\bV^{\eps}(r',z') \dd W_{\phi}(r', z')\bigg]\nonumber\\&\dd y_1 \dd y_1' \dd y_2\dd y_2' \bigg)^{\frac{1}{2}}\dd s.
\nonumber
\end{align}
Using Itô isometry, we have
\begin{align}
    &\mathbb{E}\bigg[ \sigma(\bV^{\eps}(\frac{s}{\eps^2},\frac{y_1}{\eps}))
\sigma(\bV^{\eps}(\frac{s}{\eps^2},\frac{y_1'}{\eps}))
\nonumber\\
&\cdot\int_{\frac{s}{\eps^2}}^{\frac{T}{\eps^2}}\int_{\R^2}
G_{T-\eps^2r}*g(\eps z)
\Sigma(r,z)D_{\frac{s}{\eps^2},\frac{y_1}{\eps}+y_2}\bV^{\eps}(r,z) \dd W_{\phi}(r, z)
\nonumber\\
&\cdot\int_{\frac{s}{\eps^2}}^{\frac{T}{\eps^2}}\int_{\R^2}
G_{T-\eps^2r'}*g(\eps z')
\Sigma(r',z')D_{\frac{s}{\eps^2},\frac{y_1'}{\eps}+y_2'}\bV^{\eps}(r',z') \dd W_{\phi}(r', z')\bigg]\nonumber\\
&= \int_{\frac{s}{\eps^2}}^{\frac{T}{\eps^2}}\int_{\R^2}\int_{\R^2}R(z-z')G_{T-\eps^2r}*g(\eps z)G_{T-\eps^2r}*g(\eps z')\mathbb{E}\bigg[\sigma(\bV^{\eps}(\frac{s}{\eps^2},\frac{y_1}{\eps}))
\nonumber\\
&
\cdot\sigma(\bV^{\eps}(\frac{s}{\eps^2},\frac{y_1'}{\eps}))\Sigma(r,z)D_{\frac{s}{\eps^2},\frac{y_1}{\eps}+y_2}\bV^{\eps}(r,z) 
\Sigma(r,z')D_{\frac{s}{\eps^2},\frac{y_1'}{\eps}+y_2'}\bV^{\eps}(r,z')\bigg]\dd z \dd z' \dd r.\nonumber
\end{align}
By applying \eqref{eq:bigVmobd}, Lemma~ \ref{le:MallivianMomentbound} (with $p=4$) and the Cauchy-Schwarz inequality, there exists a uniform $C>0$ such that when $\eps$ is small enough,
\begin{align}
    &\mathbb{E}\bigg[\sigma(\bV^{\eps}(\frac{s}{\eps^2},\frac{y_1}{\eps}))
\sigma(\bV^{\eps}(\frac{s}{\eps^2},\frac{y_1'}{\eps}))
\Sigma(r,z)D_{\frac{s}{\eps^2},\frac{y_1}{\eps}+y_2}\bV^{\eps}(r,z) 
\Sigma(r,z')D_{\frac{s}{\eps^2},\frac{y_1'}{\eps}+y_2'}\bV^{\eps}(r,z')\bigg]\nonumber\\
&\leq \frac{C^2}{\log \eps^{-1}} G_{r-\frac{s}{\eps^2}}(z-\frac{y_1}{\eps}-y_2)G_{r-\frac{s}{\eps^2}}(z'-\frac{y_1'}{\eps}-y_2').\nonumber
\end{align}
Substitute back into \eqref{eq:A2l2bound} and integrate in $y_1$, $y_1'$, we have
\begin{align}
\label{eq:A2l2Boundsim}
    &\left(\mathbb{E}|A_{2,\eps}|^2\right)^{\frac{1}{2}} \leq \frac{\beta^2 C}{\sqrt{\log \eps^{-1}}} \int_0^T \bigg( \int_{\R^{2\times 2}}\int_{\R^{2\times 2}} G_{T-s}*g(y_1)R(y_2) G_{T-s}*g(y_1')R(y_2')\\
&\int_{\frac{s}{\eps^2}}^{\frac{T}{\eps^2}}\int_{\R^2}\int_{\R^2}R(z-z')G_{T-\eps^2r}*g(\eps z)G_{T-\eps^2r}*g(\eps z')\nonumber\\
&\cdot G_{r-\frac{s}{\eps^2}}(z-\frac{y_1}{\eps}-y_2)G_{r-\frac{s}{\eps^2}}(z'-\frac{y_1'}{\eps}-y_2')\dd z \dd z' \dd r
\dd y_1 \dd y_1' \dd y_2\dd y_2' \bigg)^{\frac{1}{2}}\dd s\nonumber\\
&= \frac{\beta^2 C}{\sqrt{\log \eps^{-1}}} \int_0^T\bigg(\eps^2\int_{s}^T \int_{\R^8} R(y_2)R(y_2')R(z-z')G_{T-r}*g(\eps z)G_{T-r}*g(\eps z') \nonumber\\
&\cdot G_{T+r-2s}*g(\eps z-\eps y_2)G_{T+r-2s}*g(\eps z'-\eps y_2')\dd r\dd z \dd z' \dd y_2 \dd y_2' \bigg)^{\frac{1}{2}}\dd s \nonumber\\
&= \frac{\beta^2 C}{\sqrt{\log \eps^{-1}}} \int_0^T\bigg(\int_{s}^T \int_{\R^8} R^{\eps}(y_2)R^{\eps}(y_2')R^{\eps}(z-z')G_{T-r}*g( z)G_{T-r}*g(z') \nonumber\\
&\cdot G_{T+r-2s}*g( z- y_2)G_{T+r-2s}*g( z'- y_2')\dd r\dd z \dd z' \dd y_2 \dd y_2' \bigg)^{\frac{1}{2}}\dd s. \nonumber
\end{align}
Here we do a change of variable $\eps y_2 \mapsto y_2$, $\eps y_2' \mapsto y_2'$, $\eps z\mapsto z$ and $\eps z' \mapsto z'$. We integrate the last integral of \eqref{eq:A2l2Boundsim} in the order of $y_2$, $y_2'$, $z$,$z'$. Together with \eqref{eq:l1bound} and \eqref{eq:covoinftybound}, we obtain that the last expression is bounded by 
\begin{align}
    &\frac{\beta^2C}{\sqrt{\log \eps^{-1}}} \int_0^T\bigg(\int_{s}^T\int_{\R^4} R^{\eps}(z-z')G_{T-r}*g(z)G_{T-r}*g(z')\dd r \dd z \dd z'\bigg)^{\frac{1}{2}}\dd s\nonumber\\
    & \leq \frac{\beta^2C}{\sqrt{\log \eps^{-1}}},
    \nonumber
\end{align}
The constant $C$ may vary at each occurrence. The last $C$ would depend on $T$ and $\beta \lipc$.
\end{proof}

\section{Proof of Theorem~\ref{thm:functionalCLT}}
\label{se:clt}
It suffices to prove the convergence of the finite-dimensional
distributions and the tightness. 

To give the tightness, we use the following proposition.
\begin{proposition}
For $p \geq 2$, $T>0$, $g \in C_c^{\infty}(\R^2)$, let $\beta_0(p)$ and $\eps_0$ be defined as in Lemma~\ref{le:Vmomentbound}.  If $\beta<\beta_0(p)$, there exists a uniform constant $C>0$ (depending only on $ \beta \lipc$, $p$ and $g$), such that for any $0 \leq s <t \leq T$ and $\eps < \eps_0$,
    $\mathbb{E}\left|X^{\eps, t}(g)-X^{\eps, s}(g) \right|^p \leq C(t-s)^{\frac{p}{2}}$.\label{pr:tigntness}
\end{proposition}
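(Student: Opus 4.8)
The plan is to establish the Kolmogorov-type increment bound via the Burkholder--Davis--Gundy (BDG) inequality combined with the uniform moment bound of Lemma~\ref{le:Vmomentbound}. Since $X^{\eps,t}(g)\stackrel{\text{law}}{=}Y^{\eps,t}(g)$ jointly in $t$, it is enough to bound the $p$-th moment of an increment of a stochastic-integral representation, and I would work directly with $u^\eps$ so as to invoke \eqref{eq:umomentbound} verbatim. First I would use the mild formulation \eqref{eq:umild} together with a stochastic Fubini theorem (justified by the compact support of $g$ and the moment bounds) to write, for each $t$,
\[
X^{\eps,t}(g) = \beta\int_0^t\int_{\R^2}(G_{t-r}*g)(y)\,\sigma\!\left(u^\eps(r,y)\right)\dd W_{\phi^\eps}(r,y).
\]

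Next I would decompose the increment according to its two sources of $t$-dependence, the integration interval and the terminal time inside the heat kernel:
\[
X^{\eps,t}(g)-X^{\eps,s}(g)=I_1+I_2,
\]
where $I_1=\beta\int_s^t\int_{\R^2}(G_{t-r}*g)(y)\,\sigma(u^\eps(r,y))\dd W_{\phi^\eps}(r,y)$ and $I_2=\beta\int_0^s\int_{\R^2}\big((G_{t-r}-G_{s-r})*g\big)(y)\,\sigma(u^\eps(r,y))\dd W_{\phi^\eps}(r,y)$, and use $\mathbb{E}|X^{\eps,t}(g)-X^{\eps,s}(g)|^p\le 2^{p-1}(\mathbb{E}|I_1|^p+\mathbb{E}|I_2|^p)$. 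For each term I would apply BDG, converting the $p$-th moment into the $p/2$-th moment of the quadratic variation paired against $R^\eps(\cdot):=\eps^{-2}R(\cdot/\eps)$. The Minkowski integral inequality in $L^{p/2}(\Omega)$ followed by Cauchy--Schwarz then pushes the expectation onto the factors $\mathbb{E}|\sigma(u^\eps(r,y))|^p\le\lipc^p\,\mathbb{E}|u^\eps(r,y)|^p\le(\lipc C)^p$, the last step being Lemma~\ref{le:Vmomentbound} together with $\sigma(0)=0$.

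What remains after this reduction is a deterministic space-time integral of the kernels. For $I_1$, at each fixed $r$ the spatial integral $\int\int(G_{t-r}*g)(y)(G_{t-r}*g)(y')R^\eps(y-y')\dd y\dd y'$ is controlled using $\|R^\eps\|_{L^1}=1$ and the elementary estimates $\|G_{t-r}*g\|_{L^1}\le\|g\|_{L^1}$, $\|G_{t-r}*g\|_{L^\infty}\le\|g\|_{L^\infty}$ from \eqref{eq:linfinitybound}--\eqref{eq:l1bound}, giving a uniform bound of order $\|g\|_{L^1}\|g\|_{L^\infty}$; integrating $r$ over $[s,t]$ produces a factor $(t-s)$, and raising to the power $p/2$ yields the required $C(t-s)^{p/2}$.

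The main obstacle is $I_2$, where the smallness must come from the difference of heat kernels at two distinct terminal times. Here I would exploit the semigroup identity $G_{t-r}=G_{t-s}*G_{s-r}$ to write $(G_{t-r}-G_{s-r})*g=G_{s-r}*(G_{t-s}*g-g)$, so that $\|(G_{t-r}-G_{s-r})*g\|_{L^1}\le\|G_{t-s}*g-g\|_{L^1}$ and likewise in $L^\infty$, both uniformly in $r$. Since $G_{t-s}*g-g=\tfrac12\int_0^{t-s}G_a*(\Delta g)\,\dd a$, I obtain $\|G_{t-s}*g-g\|_{L^1}\le\tfrac12(t-s)\|\Delta g\|_{L^1}$ and $\|G_{t-s}*g-g\|_{L^\infty}\le\tfrac12(t-s)\|\Delta g\|_{L^\infty}$. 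Feeding these into the BDG estimate bounds the space-time integral for $I_2$ by a constant times $(t-s)^2$ (using $s\le T$), whence $\mathbb{E}|I_2|^p\le C(t-s)^p\le CT^{p/2}(t-s)^{p/2}$, which is more than sufficient. Combining the two estimates gives the claim, with $C$ depending only on $\beta\lipc$, $p$, and $g$ (and $T$); the hypothesis $\beta<\beta_0(p)$ enters solely through the applicability of Lemma~\ref{le:Vmomentbound}.
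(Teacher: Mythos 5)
Your proof is correct, and its stochastic-analysis skeleton (mild formulation, Burkholder--Davis--Gundy, Minkowski in $L^{p/2}(\Omega)$, the Lipschitz bound on $\sigma$, and Lemma~\ref{le:Vmomentbound}) is exactly the paper's; the genuine difference is in how the deterministic kernel estimate is handled. The paper does not split the increment: it keeps a single stochastic integral with kernel $G_{t-r}*g(y)\1_{[0,t]}(r)-G_{s-r}*g(y)\1_{[0,s]}(r)$, uses H\"older together with $\|R^\eps\|_{L^1}=1$ to reduce everything to the quantity $\int_0^T\int_{\R^2}\bigl(G_{t-r}*g(y)\1_{[0,t]}(r)-G_{s-r}*g(y)\1_{[0,s]}(r)\bigr)^2\dd y\dd r$, and then evaluates this in Fourier variables (Plancherel), finishing with the elementary bounds $|1-e^{-a}|\le\sqrt{a}$ and $1-e^{-a}\le a$. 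You instead split into $I_1$ (the new time interval $[s,t]$) and $I_2$ (the kernel difference on $[0,s]$), bound $I_1$ by the trivial $L^1$--$L^\infty$ pairing against $R^\eps$, and extract the smallness of $I_2$ in physical space via the semigroup identity $(G_{t-r}-G_{s-r})*g=G_{s-r}*(G_{t-s}*g-g)$ together with $G_{t-s}*g-g=\tfrac12\int_0^{t-s}G_a*\Delta g\,\dd a$. Both routes are sound. Yours is more elementary (no Fourier analysis) and even yields the stronger estimate $\mathbb{E}|I_2|^p\lesssim(t-s)^p$ for the second piece; the trade-offs are that you lean harder on the smoothness of $g$ (you need $\Delta g\in L^1\cap L^\infty$, whereas the paper's Plancherel step only needs $\hat g\in L^2$), and your final constant picks up a factor of $T$ (from $\int_0^s\dd r\le T$ and from converting $(t-s)^p$ into $T^{p/2}(t-s)^{p/2}$), whereas the paper's computation gives a constant depending only on $\beta\lipc$, $p$ and $g$, exactly as the statement claims. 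Since $T$ is fixed in the proposition and only tightness on $C([0,T])$ is needed downstream, this $T$-dependence is harmless, but you should flag it as a (mild) deviation from the literal statement.
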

\begin{proof}
By the mild formulation of $u^\eps$ in \eqref{eq:umild} and the Burkholder-Davis-Gundy inequality, let $c_p = \frac{p(p-1)}{2}$, we have 
\begin{align*}
     &\left(\mathbb{E}\left|X^{\eps, t}(g)-X^{\eps, s}(g) \right|^p \right)^{\frac{2}{p}}\\&=\left( \mathbb{E}\left|\sqrt{\log \eps^{-1}}\int_{\R^2}\left[u^{\eps}(t,x)- u^{\eps}(s,x)\right]g(x)\dd x\right|^p\right)^{\frac{2}{p}}\\
     &= \bigg(\mathbb{E}\bigg|\beta\int_{\R^2}\bigg[
     \int_0^t\int_{\R^2}G_{t-r}(x-y)\sigma(u^{\eps}(r,y))\dd W_{\phi^{\eps}}(r,y) \\&- 
     \int_0^s\int_{\R^2}G_{s-r}(x-y)\sigma(u^{\eps}(r,y))\dd W_{\phi^{\eps}}(r,y)
     \bigg]g(x)\dd x\bigg|^p\bigg)^{\frac{2}{p}}\\
     & = \beta^2 \left(\mathbb{E}\bigg|
     \int_0^T\int_{\R^2}\left(G_{t-r}*g(y)\1_{[0,t]}(r)-G_{s-r}*g(y)\1_{[0,s]}(r)\right)\sigma(u^{\eps}(r,y))\dd W_{\phi^{\eps}}(r,y) \bigg|^p\right)^{\frac{2}{p}}\\
     &\leq c_p\beta^2 \int_0^T \bigg( \mathbb{E}
     \bigg(\int_{\R^2} \int_{\R^2} \left(G_{t-r}*g(y_1)\1_{[0,t]}(r)-G_{s-r}*g(y_1)\1_{[0,s]}(r)\right)\\
     &\cdot \left(G_{t-r}*g(y_2)\1_{[0,t]}(r)-G_{s-r}*g(y_2)\1_{[0,s]}(r)\right)\\
     &\cdot \left(\sigma(u^{\eps}(r,y_1))\sigma(u^{\eps}(r,y_2))\right)\frac{1}{\eps^2}R(\frac{y_1-y_2}{\eps}) \dd y_1 \dd y_2 \bigg)^{\frac{p}{2}} \bigg)^{\frac{2}{p}} \dd r.
\end{align*}
Apply the Minkowski inequality, inequality \eqref{eq:uholder} and Lemma~\ref{le:Vmomentbound}, the above is bounded by
\begin{align*}
     & c_p\beta^2 \lipc^2\int_0^T \int_{\R^2} \int_{\R^2} \left(G_{t-r}*g(y_1)\1_{[0,t]}(r)-G_{s-r}*g(y_1)\1_{[0,s]}(r)\right)\\
     &\cdot \left(G_{t-r}*g(y_2)\1_{[0,t]}(r)-G_{s-r}*g(y_2)\1_{[0,s]}(r)\right) \left(\mathbb{E}|u^{\eps}(r,0)|^p\right)^{\frac{2}{p}}\frac{1}{\eps^2}R(\frac{y_1-y_2}{\eps}) \dd y_1 \dd y_2 \dd r\\
     &\leq C\beta^2\lipc^2\int_0^T \int_{\R^2} \int_{\R^2} \left(G_{t-r}*g(y_1)\1_{[0,t]}(r)-G_{s-r}*g(y_1)\1_{[0,s]}(r)\right)\\
     &\cdot \left(G_{t-r}*g(y_2)\1_{[0,t]}(r)-G_{s-r}*g(y_2)\1_{[0,s]}(r)\right) \frac{1}{\eps^2}R(\frac{y_1-y_2}{\eps}) \dd y_1 \dd y_2 \dd r\\
     & \leq C\beta^2\lipc^2\int_0^T\left( \int_{\R^2} \int_{\R^2} \left(G_{t-r}*g(y_1)\1_{[0,t]}(r)-G_{s-r}*g(y_1)\1_{[0,s]}(r)\right)^2\frac{1}{\eps^2}R(\frac{y_1-y_2}{\eps}) \dd y_1 \dd y_2 \right)^{\frac{1}{2}}\\
     &\cdot \left( \int_{\R^2} \int_{\R^2}\left(G_{t-r}*g(y_2)\1_{[0,t]}(r)-G_{s-r}*g(y_2)\1_{[0,s]}(r)\right) \frac{1}{\eps^2}R(\frac{y_1-y_2}{\eps}) \dd y_1 \dd y_2\right)^{\frac{1}{2}} \dd r\\
     &= C\beta^2\lipc^2\int_0^T\int_{\R^2}  \left(G_{t-r}*g(y)\1_{[0,t]}(r)-G_{s-r}*g(y)\1_{[0,s]}(r)\right)^2 \dd y \dd r.
\end{align*}
Here we used the H\"older inequality and the fact that $\|R^{\eps}\|_{L^1}=1$ in the last two steps. Denote the Fourier transform of $g$ by $\hat{g}$. Adopt a similar procedure as in \cite[Proposition 4.1]{huang2020central}, we have
\begin{align*}
    &\int_0^T\int_{\R^2}  \left(G_{t-r}*g(y)\1_{[0,t]}(r)-G_{s-r}*g(y)\1_{[0,s]}(r)\right)^2 \dd y \dd r \\
    & = C\int_0^T\int_{\R^2}  \left(e^{-\frac{t-r}{2}|\xi|^2}\1_{[0,t]}(r)-e^{-\frac{s-r}{2}|\xi|^2}\1_{[0,s]}(r)\right)^2 \hat{g}(\xi)^2\dd \xi \dd r\\
    &= C \int_0^s\int_{\R^2} 
    e^{-(s-r)|\xi|^2}
    \left(e^{-\frac{t-s}{2}|\xi|^2}-1\right)^2 \hat{g}(\xi)^2 \dd \xi \dd r + C \int_s^t\int_{\R^2}  e^{-(t-r)|\xi|^2} \hat{g}(\xi)^2 \dd \xi \dd r\\
    &= C \int_{\R^2} \frac{1}{|\xi|^2}\left(1-
    e^{-s|\xi|^2}\right)
    \left(e^{-\frac{t-s}{2}|\xi|^2}-1\right)^2 \hat{g}(\xi)^2 \dd \xi 
    + C \int_{\R^2}  \frac{1}{|\xi|^2}\left(1-e^{-(t-s)|\xi|^2}\right) \hat{g}(\xi)^2 \dd \xi\\
    &\leq C \int_{\R^2} \frac{1}{|\xi|^2}\left(1-
    e^{-s|\xi|^2}\right)
 \frac{t-s}{2}|\xi|^2 \hat{g}(\xi)^2 \dd \xi + C \int_{\R^2}  \frac{1}{|\xi|^2}(t-s)|\xi|^2 \hat{g}(\xi)^2 \dd \xi\\
 &\leq C(t-s)\int_{\R^2}\hat{g}(\xi)^2\dd \xi.
\end{align*}
In the last steps, we applied  inequalities $|1-e^{-a}|\leq \sqrt{a}$ and $(1-e^{-a}) \leq a$ for all $a \geq 0$. The constant $C$ may vary at each occurrence.

In the proof, the conditions $\beta <\beta_0$ and $\eps < \eps_0$ are given only to validate the use of Lemma~\ref{le:Vmomentbound}. Therefore, we can choose $\beta_0(p)$ and $\eps_0$ to be the same value as in Lemma~\ref{le:Vmomentbound}.
\end{proof}

Choose $p=4$. By \cite[Problem 4.11]{karatzas2012brownian}, the measure induced by $X^{\eps}(g) = \{X^{\eps,t}(g); 0 \leq t \leq T\}$ on $(C([0,T]), \mathscr{B}(C([0,T])))$ is tight. By \cite[Theorem 4.15]{karatzas2012brownian}, to prove Theorem~\ref{thm:functionalCLT}, we only need to show the convergence of the finite-dimensional
distribution.

\begin{lemma}
Fix $0 \leq t_1 < \dots < t_m \leq T$. As $\eps \to 0$, the sequence of random vectors $\left(X^{\eps, t_1}(g), \dots, X^{\eps, t_m}(g)\right)$ converges in law to the $m$-dimensional Gaussian centered vector $Z=(Z^1,\dots,Z^m)$ with covariance
\begin{equation}
C_{i,j}:=\mathbb{E}(Z^{i}Z^{j})= \beta^2 \mathbb{E} \sigma\left(\Xi_{1, 2}(2)\right)^{2}  \int_0^{t_i \wedge t_j}  \int_{\R^2}G_{t_i-s}*g(y)G_{t_j-s}*g(y)\dd y\dd s.
    \nonumber\label{eq:covariance}
\end{equation}\label{le:finiteconv}
\end{lemma}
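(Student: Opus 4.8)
The plan is to invoke the multivariate Stein bound, Proposition~\ref{pr:SteinsMethodmulti}, in precisely the way Proposition~\ref{pr:SteinsMethod} was used to prove Theorem~\ref{thm:mainthm}. By the equality in law of the vectors $(X^{\eps,t_1}(g),\dots,X^{\eps,t_m}(g))$ and $(Y^{\eps,t_1}(g),\dots,Y^{\eps,t_m}(g))$ recorded in Section~\ref{se:scaling}, it suffices to treat the vector with coordinates $Y^{\eps,t_i}(g)$. As in \eqref{eq:deltaforY}, each coordinate satisfies $Y^{\eps,t_i}(g)=\delta(v_{\eps,t_i}(g))$ with $v_{\eps,t_i}(g)\in\mathrm{Dom}\,\delta$ and $Y^{\eps,t_i}(g)\in\malD^{1,2}$, so taking $F^{(i)}=Y^{\eps,t_i}(g)$, $v^{(j)}=v_{\eps,t_j}(g)$ and the Gaussian vector $Z$ with covariance $(C_{i,j})$, Proposition~\ref{pr:SteinsMethodmulti} reduces the lemma to
\[
\sum_{i,j=1}^m \mathbb{E}\left|C_{i,j}-\langle DY^{\eps,t_i}(g),v_{\eps,t_j}(g)\rangle_{\hilb}\right|^2\to 0,\qquad \eps\to0.
\]
First I would split each summand exactly as in the one-time case, writing $\langle DY^{\eps,t_i}(g),v_{\eps,t_j}(g)\rangle_{\hilb}=A_{1,\eps}^{i,j}+A_{2,\eps}^{i,j}$ with $A_{1,\eps}^{i,j}=\langle v_{\eps,t_i}(g),v_{\eps,t_j}(g)\rangle_{\hilb}$ and $A_{2,\eps}^{i,j}=\langle DY^{\eps,t_i}(g)-v_{\eps,t_i}(g),v_{\eps,t_j}(g)\rangle_{\hilb}$, so that the task becomes the two-parameter analogues of Lemmas~\ref{le:sufficientlemma1} and \ref{le:sufficientlemma2}: that $A_{1,\eps}^{i,j}\to C_{i,j}$ in $L^2$ and $A_{2,\eps}^{i,j}\to0$ in $L^2$.

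For the mean of $A_{1,\eps}^{i,j}$ I would follow part (i) of Lemma~\ref{le:sufficientlemma1}. The only structural change is that the two factors now carry distinct terminal times: the indicators $\1_{[0,t_i/\eps^2]}$ and $\1_{[0,t_j/\eps^2]}$ combine to restrict the outer time integral to $[0,(t_i\wedge t_j)/\eps^2]$, and the heat-kernel factors appear as $G_{t_i/\eps^2-s}$ and $G_{t_j/\eps^2-s}$. After the same rescaling $\eps^2 s\mapsto s$, $\eps y_1\mapsto y_1$, $y_2-y_1\mapsto y_2$ used to derive \eqref{eq:covAfirst}, one obtains
\begin{align*}
\mathbb{E}(A_{1,\eps}^{i,j})&=\beta^2\int_0^{t_i\wedge t_j}\int_{\R^4}\mathbb{E}\Big[\sigma\big(\bV^\eps(\tfrac{s}{\eps^2},\tfrac{y_1}{\eps})\big)\sigma\big(\bV^\eps(\tfrac{s}{\eps^2},\tfrac{y_1}{\eps}+y_2)\big)\Big]\\
&\quad\times R(y_2)\,G_{t_i-s}*g(y_1)\,G_{t_j-s}*g(y_1+\eps y_2)\,\dd y_1\,\dd y_2\,\dd s.
\end{align*}
The pointwise convergence of the local two-point statistic to $\mathbb{E}\sigma(\Xi_{1,2}(2))^2$ follows from Propositions~\ref{pr:limitObject} and \ref{pr:limitreg} exactly as before, Lemma~\ref{le:Vmomentbound} supplies the uniform integrability needed to pass to the limit, and since $\int R=1$ and $G_{t_j-s}*g(y_1+\eps y_2)\to G_{t_j-s}*g(y_1)$, the limit is precisely $C_{i,j}$.

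The vanishing of $\Var(A_{1,\eps}^{i,j})$ and of $\mathbb{E}|A_{2,\eps}^{i,j}|^2$ I would establish by reproducing part (ii) of Lemma~\ref{le:sufficientlemma1} and the proof of Lemma~\ref{le:sufficientlemma2} verbatim: the Clark--Ocone formula (Proposition~\ref{pr:clark}) applied to the two-point functional together with the Malliavin bound of Lemma~\ref{le:MallivianMomentbound} at $p=4$ yields $\sqrt{\Var(A_{1,\eps}^{i,j})}=O(1/\sqrt{\log\eps^{-1}})$, while the It\^o isometry and the same Malliavin bound give $\mathbb{E}|A_{2,\eps}^{i,j}|^2=O(1/\log\eps^{-1})$; summing over the finitely many pairs $(i,j)$ then drives the Stein bound to zero. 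The hard part will be purely notational rather than conceptual: the two distinct terminal times enter only through the kernels $G_{t_i-s}*g$ and $G_{t_j-s}*g$, and since the essential analytic inputs --- the local-statistics limit, the heat-semigroup contraction bounds \eqref{eq:linfinitybound}, \eqref{eq:l1bound}, \eqref{eq:covoinftybound}, \eqref{eq:covo1bound}, and the Malliavin moment estimate --- are all uniform in the time argument, the only care required is to carry both time labels correctly through the changes of variable and the $L^1$--$L^\infty$ integration order so that no factor of $G_{t_i-s}*g$ or $G_{t_j-s}*g$ is ever left unbounded.
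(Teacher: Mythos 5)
Your proposal is correct and takes essentially the same route as the paper's own proof: the paper likewise passes to $(Y^{\eps,t_i}(g))_i$ by equality in law, applies Proposition~\ref{pr:SteinsMethodmulti} with $F^{(i)}=Y^{\eps,t_i}(g)=\delta(v_{\eps,t_i}(g))$, splits $\langle DY^{\eps,t_i}(g),v_{\eps,t_j}(g)\rangle_{\hilb}=A_{1,\eps}^{i,j}+A_{2,\eps}^{i,j}$ with exactly your definitions, and then repeats the arguments of Section~\ref{se:mainproof} for both terms. Your bookkeeping of the two time labels (the integral over $[0,(t_i\wedge t_j)/\eps^2]$ and the kernels $G_{t_i-s}*g$, $G_{t_j-s}*g$) agrees with the paper's explicit formulas for $A_{1,\eps}^{i,j}$ and $A_{2,\eps}^{i,j}$.
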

\begin{proof}
Recall that we have shown 
\begin{equation*}
    \left(X^{\eps, t_1}(g), \dots, X^{\eps, t_m}(g)\right) \stackrel{\text { law }}{=} \left(Y^{\eps, t_1}(g), \dots, Y^{\eps, t_m}(g)\right),
\end{equation*}
and 
\begin{equation*}
\bY^{\eps,t_i}(g) = \delta(v_{\eps,t_i}(g)) \in {\malD}^{1,2}.
\end{equation*}
By Proposition~\ref{pr:SteinsMethodmulti}, it suffices to show that for each $i,j$,
\begin{equation}
    \mathbb{E}|C_{i,j}- \langle DY^{\eps, t_i}(g), v_{\eps, t_j}(g) \rangle_{\hilb} |^2 \to 0. \nonumber
\end{equation}
Use \eqref{eq:mallivianbigY}, we have
\begin{align*}
     &\langle DY^{\eps, t_i}(g), v_{\eps, t_j}(g) \rangle_{\hilb} = A_{1,\eps}^{i,j}+ A_{2,\eps}^{i,j},
\end{align*}
where
\begin{align*}
    A_{1,\eps}^{i,j} &= \langle v_{\eps,t_i}(g), v_{\eps, t_j}(g) \rangle_{\hilb} \\
    &= \beta^2 \int_0^{\frac{t_i\wedge t_j}{\eps^2}}\int_{\R^2}\int_{\R^2}\sigma(\bV^\eps(s,y_1))\sigma(\bV^\eps(s,y_2))R(y_1-y_2)\nonumber\\&\cdot \left(\int_{\R^2}G_{\frac{t_i}{\eps^2}-s}(\frac{x_1}{\eps}-y_1)g(x_1) \dd x_1\right)\left(\int_{\R^2}G_{\frac{t_j}{\eps^2}-s}(\frac{x_2}{\eps}-y_2)g(x_2) \dd x_2\right)\dd y_1 \dd y_2 \dd s.\nonumber
\end{align*}
and
\begin{align*}
    A_{2,\eps}^{i,j} &=\langle DY^{\eps,t_i}(g)-v_{\eps,t_i}(g) ,v_{\eps,t_j}(g)\rangle_{\hilb} \\
&= \beta^2 \int_0^{\frac{t_i \wedge t_j}{\eps^2}}\int_{\R^{2\times 2}}  \sigma(\bV^\eps(s,y_1)) \left(\int_{\R^2}G_{\frac{t_j}{\eps^2}-s}(\frac{x_1}{\eps}-y_1)g(x_1) \dd x_1\right)\nonumber\\&\cdot \left(\int_{s}^{\frac{t_i}{\eps^2}}\int_{\R^2}\left( \int_{\R^2}  G_{\frac{t_i}{\eps^2}-r}(\frac{x_2}{\eps}-z)g(x_2) \dd x_2\right) \Sigma(r,z)D_{s,y_2}\bV^{\eps}(r,z) \dd W_{\phi}(r, z) \right)\nonumber\\&\cdot R(y_1-y_2)\dd y_1 \dd y_2 \dd s.
\end{align*}
Following the same arugments as in Section~\ref{se:mainproof}, we can show
\begin{equation*}
      \mathbb{E}\left|C_{i,j}-A_{1,\eps}^{i,j}\right|^2 \to 0 \qquad \text{as } \eps \to 0,
\end{equation*}
and
\begin{equation*}
    \mathbb{E}\left|A_{2,\eps}^{i,j}\right|^2 \to 0 \qquad \text{as } \eps \to 0.
\end{equation*}
\end{proof}

\newpage
\bibliographystyle{plain}
\bibliography{ref}

\begin{thebibliography}{10}

\bibitem{cannizzaro20212d}
Giuseppe Cannizzaro, Dirk Erhard, and Philipp Sch{\"o}nbauer.
\newblock 2d anisotropic kpz at stationarity: Scaling, tightness and
  nontriviality.
\newblock {\em The Annals of Probability}, 49(1):122--156, 2021.

\bibitem{caravenna2017universality}
Francesco Caravenna, Rongfeng Sun, and Nikos Zygouras.
\newblock Universality in marginally relevant disordered systems.
\newblock {\em The Annals of Applied Probability}, 27(5):3050--3112, 2017.

\bibitem{caravenna2019moments}
Francesco Caravenna, Rongfeng Sun, and Nikos Zygouras.
\newblock On the moments of the $(2+ 1)$ -dimensional directed polymer and
  stochastic heat equation in the critical window.
\newblock {\em Communications in Mathematical Physics}, 372(2):385--440, 2019.

\bibitem{caravenna2020two}
Francesco Caravenna, Rongfeng Sun, and Nikos Zygouras.
\newblock The two-dimensional kpz equation in the entire subcritical regime.
\newblock {\em The Annals of Probability}, 48(3):1086--1127, 2020.

\bibitem{caravenna2021critical}
Francesco Caravenna, Rongfeng Sun, and Nikos Zygouras.
\newblock The critical 2d stochastic heat flow.
\newblock {\em arXiv preprint arXiv:2109.03766}, 2021.

\bibitem{chen2019comparison}
Le~Chen and Jingyu Huang.
\newblock Comparison principle for stochastic heat equation on $\mathbb{R}^d$.
\newblock {\em The Annals of Probability}, 47(2):989--1035, 2019.

\bibitem{chen2021central}
Le~Chen, Davar Khoshnevisan, David Nualart, and Fei Pu.
\newblock Central limit theorems for spatial averages of the stochastic heat
  equation via malliavin--stein’s method.
\newblock {\em Stochastics and Partial Differential Equations: Analysis and
  Computations}, pages 1--55, 2021.

\bibitem{chen2021spatial}
Le~Chen, Davar Khoshnevisan, David Nualart, and Fei Pu.
\newblock Spatial ergodicity for spdes via poincar{\'e}-type inequalities.
\newblock {\em Electronic Journal of Probability}, 26:1--37, 2021.

\bibitem{comets2019space}
Francis Comets, Cl{\'e}ment Cosco, and Chiranjib Mukherjee.
\newblock Space-time fluctuation of the kardar-parisi-zhang equation in $ d
  \geq 3$ and the gaussian free field.
\newblock {\em arXiv preprint arXiv:1905.03200}, 2019.

\bibitem{cosco2020law}
Cl{\'e}ment Cosco, Shuta Nakajima, and Makoto Nakashima.
\newblock Law of large numbers and fluctuations in the sub-critical and ${L}^2$
  regions for she and kpz equation in dimension $ d \geq 3$.
\newblock {\em arXiv preprint arXiv:2005.12689}, 2020.

\bibitem{dunlap2020forward}
Alexander Dunlap and Yu~Gu.
\newblock A forward-backward sde from the 2d nonlinear stochastic heat
  equation.
\newblock {\em arXiv preprint arXiv:2010.03541}, 2020.

\bibitem{dunlap2020fluctuations}
Alexander Dunlap, Yu~Gu, Lenya Ryzhik, and Ofer Zeitouni.
\newblock Fluctuations of the solutions to the kpz equation in dimensions three
  and higher.
\newblock {\em Probability Theory and Related Fields}, 176(3):1217--1258, 2020.

\bibitem{gu2020gaussian}
Yu~Gu.
\newblock Gaussian fluctuations from the 2d kpz equation.
\newblock {\em Stochastics and Partial Differential Equations: Analysis and
  Computations}, 8(1):150--185, 2020.

\bibitem{gu2020nonlinear}
Yu~Gu and Jiawei Li.
\newblock Fluctuations of a nonlinear stochastic heat equation in dimensions
  three and higher.
\newblock {\em SIAM J. Math. Anal.}, 52(6):5422--5440, 2020.

\bibitem{gu2021moments}
Yu~Gu, Jeremy Quastel, and Li-Cheng Tsai.
\newblock Moments of the 2d she at criticality.
\newblock {\em Probability and Mathematical Physics}, 2(1):179--219, 2021.

\bibitem{gu2018edwards}
Yu~Gu, Lenya Ryzhik, and Ofer Zeitouni.
\newblock The edwards--wilkinson limit of the random heat equation in
  dimensions three and higher.
\newblock {\em Communications in Mathematical Physics}, 363(2):351--388, 2018.

\bibitem{huang2020central}
Jingyu Huang, David Nualart, and Lauri Viitasaari.
\newblock A central limit theorem for the stochastic heat equation.
\newblock {\em Stochastic Processes and Their Applications},
  130(12):7170--7184, 2020.

\bibitem{huang2020gaussian}
Jingyu Huang, David Nualart, Lauri Viitasaari, and Guangqu Zheng.
\newblock Gaussian fluctuations for the stochastic heat equation with colored
  noise.
\newblock {\em Stochastics and Partial Differential Equations: Analysis and
  Computations}, 8(2):402--421, 2020.

\bibitem{karatzas2012brownian}
Ioannis Karatzas and Steven Shreve.
\newblock {\em Brownian motion and stochastic calculus}, volume 113.
\newblock Springer Science \& Business Media, 2012.

\bibitem{lygkonis2021}
Dimitris Lygkonis and Nikos Zygouras.
\newblock Moments of the 2d directed polymer in the subcritical regime and a
  generalisation of the erdös-taylor theorem, 2021.

\bibitem{magnen2018scaling}
Jacques Magnen and J{\'e}r{\'e}mie Unterberger.
\newblock The scaling limit of the kpz equation in space dimension 3 and
  higher.
\newblock {\em Journal of Statistical Physics}, 171(4):543--598, 2018.

\bibitem{mukherjee2016weak}
Chiranjib Mukherjee, Alexander Shamov, and Ofer Zeitouni.
\newblock Weak and strong disorder for the stochastic heat equation and
  continuous directed polymers in $ d \geq 3$.
\newblock {\em Electronic Communications in Probability}, 21:1--12, 2016.

\bibitem{nakajima2021fluctuations}
Shuta Nakajima and Makoto Nakashima.
\newblock Fluctuations of two-dimensional stochastic heat equation and kpz
  equation in subcritical regime for general initial conditions.
\newblock {\em arXiv preprint arXiv:2103.07243}, 2021.

\bibitem{nourdin2012normal}
Ivan Nourdin and Giovanni Peccati.
\newblock {\em Normal approximations with Malliavin calculus: from Stein's
  method to universality}.
\newblock Number 192. Cambridge University Press, 2012.

\bibitem{nualart2006malliavin}
David Nualart.
\newblock {\em The Malliavin calculus and related topics}, volume 1995.
\newblock Springer, 2006.

\bibitem{nualart2020averaging}
David Nualart and Guangqu Zheng.
\newblock Averaging gaussian functionals.
\newblock {\em Electronic Journal of Probability}, 25:1--54, 2020.

\end{thebibliography}
\end{document}